\documentclass[12pt]{amsproc}
\usepackage{amsmath}
\usepackage{amssymb}
\newtheorem{thm}{Theorem}[section]

\newtheorem{lem}[thm]{Lemma}
\newtheorem{prop}[thm]{Proposition}
\newtheorem{cor}[thm]{Corollary}

\newtheorem{ex}[thm]{Example}
\newtheorem{rem}[thm]{Remark}

\newtheorem{con}[thm]{Construction}
\theoremstyle{definition}
\newtheorem{defn}[thm]{Definition}
\theoremstyle{remark}

\numberwithin{equation}{section}
\newcommand{\sbq}{\subseteq}
\newcommand{\spq}{\supseteq}
\newcommand{\mc}{\mathcal}
\newcommand{\vide}{\emptyset} 
\newcommand{\tbf}{\textbf}
\newcommand{\mbf}{\mathbf}
\newcommand{\mf}{\mathfrak}
\newcommand{\inv}{^{-1}}
\newcommand{\bz}{\mbf{0}}
\newcommand{\C}{\text{C}}
\DeclareMathOperator{\ann}{ann}

\newcommand{\R}{\mathbb R}
\newcommand{\N}{\mathbb N}

\newcommand{\B}{\mathbf B}
\newcommand{\res}{\raisebox{-.5ex}{$|$}}
\newcommand{\cx}{\text{C}(X)}
\newcommand{\spec}{\text{Spec\,}}
\newcommand{\minspec}{\text{Min}\,}
\newcommand{\maxspec}{\text{Max}\,}

\newcommand{\qcl}{Q_\text{cl}}
\newcommand{\epar}{\\[-.5ex]}
\newcommand{\igr}[1]{\text{IG}_{#1}}
\newcommand{\alg}[1]{\text{Alg}_{#1}}

\begin{document}       

 \title[Adjoining idempotents]{Adjoining Idempotents to a Commutative Ring\\ preprint version}

 \author{W.D. Burgess}
\address{Department of Mathematics and Statistics\\ University of Ottawa, Ottawa, Canada, K1N 6N5}

\email{wburgess@uottawa.ca}
\thanks{ Thanks to Concordia University for a grant facilitating the research.  Thanks also to the anonymous referee(s) for the very thorough reading and valuable, insightful comments.}

\subjclass[2020]{13B21, 13C11}
\keywords{commutative rings, extensions by idempotents, Pierce sheaves, f-rings, (locally) Specker algebras}

\author{R. Raphael}
\address{Department of Mathematics and Statistics\\ Concordia University, Montr\'eal, Canada, H4B 1R6}

\email{r.raphael@concordia.ca}

\begin{abstract} Everything takes place in the category of commutative unitary rings. For a fixed ring $R$, $\alg{R}$ is the class of $R$-algebras and $\igr{R}$ the subclass of idempotent generated $R$-algebras. Following Bezhanishvili \emph{et al} and their study of Specker and locally Specker $R$-algebras, this paper studies the interplay of properties of $R$ and $A\in \igr{R}$ (both as rings and as $R$-modules). Examples: (1)~If $R\sbq A\in \igr{R}$ and $R$ is weak Baer (aka p.p.\ ring) and $A$ is ring essential over $R$, then $A$ is weak Baer and locally Specker. (2)~If $R$ is semiprime and all the idempotents of the complete ring of quotients are adjoined to $R$ to form $A$, then $A_R$ is flat iff $R$ is weak Baer, in which case $A$ is locally Specker.

The Pierce sheaf is often used since it is based on idempotents. Properties are examined, old and new, that are true for $R$ iff they are true for all the Pierce stalks. Among the new is the result for f-rings (pure ideals are generated by idempotents): $R$ is an f-ring iff each of its Pierce stalks has no non-trivial pure ideals. This allows the expansion of the known classes of f-rings; f-rings play important roles in $\igr{R}$.  
 \end{abstract}
\maketitle

\setcounter{section}{0} \setcounter{thm}{0}  \tbf{0. Introduction.} Everything that follows takes place in the category $\mc{CR}$ of unitary commutative rings.  The terminology used is that of the basic texts \cite{Ma} and \cite{L}. Most often a ring $R\in \mc{CR}$ is fixed and the point of view is of the category of $R$-algebras, denoted $\alg{R}$. For $A\in \alg{R}$, there is a ring homomorphism $\phi\colon R\to A$.

The focus is frequently on the subcategory, $\igr{R}$, of $\alg{R}$ where $A\in \igr{R}$ is generated, as a ring, by $\phi(R)$ and a set of idempotents.  These idempotents may be from a larger ring or constructed via polynomials modulo an ideal.

There is a long tradition of looking at ring properties that are, or are not, preserved by various constructions such as  polynomial rings, power series rings, products, and so on. This study continues that line of thinking by being about 
the interplay of properties of $R$ and those of $A\in \igr{R}$, both as rings and as $R$-modules.  But, since idempotents are crucial, there is a major  excursion into Pierce sheaves, both as a tool and in their own right.   

The papers by Bezhanishvili \emph{et al}, \cite{BMMO}, and \cite{BMO} form a starting point.  Those papers take a categorical point of view and the latter also studies the Pierce sheaf of $A\in \igr{R}$. 

Before giving an outline of the paper, terminology is established. \epar

\noindent \tbf{Some Notation and Terminology:}

$\bullet$ The boolean algebra of idempotents in a ring $R$ is denoted by $\B(R)$. The zero ideal or submodule is written as $\bz$.

$\bullet$ If $X$ is a subset of a ring $R$, the ideal of $R$ generated by $X$ is denoted $\langle X\rangle$.  Also, if $R\sbq T$ are rings, and $Y$ is a subset of $T$, then the ring generated by $R$ and $Y$ is denoted by $R[Y]$. 

$\bullet$  As usual, the set of prime ideals of a ring $R$ is denoted $\spec R$. It is topologized with the Zariski topology.
 The set of minimal prime ideals of $R$  is denoted $\minspec R$ and the set of maximal ideals, $\maxspec R$.

$\bullet$  The term \emph{regular ring} here always refers to a \emph{von Neumann regular ring}, that is, a ring $R$ such that for all $r \in R$, there is $r'\in R$ with $r^2r' =r$. Note that, in the above equation $rr'$ is an idempotent. A regular ring $R$ is characterized by the fact every $R$-module is flat (e.g., \cite[Theorem~4.21]{L}); and by the fact that $R$ is both semiprime and every prime ideal is maximal (\cite[Theorem~3.71]{L}). 

$\bullet$ The complete (or maximal) ring of quotients (see, for example, \cite{Ste}) of $R$ is written $Q(R)$ and the classical ring of quotients (or total ring of fractions) is $\qcl(R)$. 

$\bullet$ For a topological space $X$, $\cx$ refers to the ring of continuous real-valued functions. As in the classic text \cite{GJ}, the space $X$ may be taken to be completely regular (\cite[\S~3.1]{GJ}). The complete ring of quotients and the classical ring of quotients of $\cx$ are denoted $Q(X)$ and $\qcl(X)$, respectively. 

$\bullet$ (i)~A ring $R$ is called \emph{Baer} or \emph{a Baer ring} if for any subset $Y$ of $R$, $\ann_R Y = eR$ for some $e\in \B(R)$. (ii)~A ring $R$ is called \emph{weak Baer} or \emph{wB} if for any $a\in R$, $\ann_Ra=eR$, for some $e\in \B(R)$. (A  weak Baer ring is also called a \emph{p.p.\ ring}.) (iii)~A ring $R$ is called \emph{almost weak Baer} or \emph{awB} if for $a\in R$, $\ann_Ra = ER$, for some $E\sbq \B(R)$. (The original definition of awB, \cite[page~174]{NR}, is readily seen to be equivalent to the above.)

$\bullet$ An ideal $I$ of $R$ is called \emph{pure} if $R/I$ is a flat $R$-module. A characterization used below is: $I$ is pure if for each $a\in I$ there is $b\in I$ with $ab=a$ (see e.g., \cite[Proposition~1.5]{DeM}). From this, it is clear that every idempotent generated ideal $I$ is pure.  

If every pure ideal of $R$ is generated by idempotents, then $R$ is called an \emph{f-ring} (from \cite{V}). 
\epar

\noindent \tbf{Outline of the paper}.  References are to items later in the paper.

\tbf{Section~1} is devoted to some preliminary notions and results that will be used throughout. There are also formal definitions of Specker and locally Specker algebras from \cite{BMMO} and \cite{BMO}, respectively. 

\tbf{Section~2} is an excursion into Pierce sheaves.  A property $\mc{P}$ of rings is called a \emph{ring Pierce property} if $\mc{P}$ is true for a ring $R$ if and only if it is true for each of the stalks of the Pierce sheaf. Some known ring Pierce properties are listed and some are new. In particular the property of being an f-ring is shown to be a ring Pierce property (\ref{fP'}). This allows an expansion of the known classes of rings that are f-rings, including to awB rings, and to some constructions related to products.  Moreover, f-rings play an important role in $\igr{R}$, as is seen in \cite[Section~7]{BMO}.  Other ring Pierce properties and properties that turn out not to be ring Pierce properties are considered; not all show up in later sections, but they are important in the study of commutative rings. 

\tbf{Section~3} has more on the study of f-rings in view of the material in Section~2.  This includes another, shorter, proof of \cite[Theorem~7.9]{BMO} using that being an f-ring is a ring Pierce property. Also there is a new characterization of clean rings (\ref{pmf=c}). 

\tbf{Section~4}: Algebras in $\igr{R}$ are integral extensions of $R$ and hence have going-up with respect to $R$.  This section looks at a strictly weaker form of going-down (\emph{weak going-down}, Definition~\ref{wgd}) in the context of $\igr{R}$.  It is shown (\ref{gdP}) that when $R$ is semiprime and $R\sbq A\in \igr{R}$, if $A$ has weak going-down with respect to $R$, then $A_R$ is flat. 

\tbf{Section~5}: This short section asks: if $R$ is semiprime and $R\sbq A \in \igr{R}$, when $A$ is semiprime?  If $A$ has weak going-down, then $A$ is semiprime (\ref{flatE}). 

\tbf{Section~6}: This section looks at $\alg{R}$ where $R$ is a wB ring. If $R$ is wB and $A\in \igr{R}$ with $R\sbq A$, and $A$ is ring essential over  $R$ (i.e., for $0\ne a\in A$, $aA\cap R\ne\bz$),  then $A$ is wB and is a locally Specker $R$-algebra (\ref{epp}). These properties are then looked at in the context of rings of continuous functions of the form $\cx$, including some examples when $X$ is an almost P-space.\epar

There are two basic ways of adding idempotents to a ring $R$ (or, from the point of view of $\igr{R}$, to $\phi(R)$).  One is to find them in a bigger ring.  The other is by direct construction.  This latter is examined in detail in \cite{BMMO} where, for any boolean algebra $B$, there is an extension of a ring $R$ by $B$ where the elements of $B$ are adjoined to $R$ as idempotents to get a ring $R[B]_{Sp}$. This is called the  \emph{free extension by $B$} or, following, \cite{BMMO}, a \emph{Specker $R$-algebra}. The construction is given by \cite[Definition~2.4]{BMMO}. Here $R[B]_{Sp}$ is a free $R$-module. There is a more general point of view in \cite{BMO}, where the idempotents of the base ring $R$ and those of $B$ may have interactions. The resulting construction is of a \emph{locally Specker $R$-algebra} (see Definition~\ref{LSp}). \epar

Since idempotents are the focus of the paper, one natural tool is the \emph{Pierce sheaf} (see \cite{P}, \cite{BS}, and \cite[Chapter~V.2]{J}, and also \cite[Introduction]{BMO}).  For any ring $R$, the set of idempotents of $R$, $\B(R)$ (called Id($R$) in \cite{BMO}), can be considered as a boolean ring or a boolean algebra; the latter is used in the sequel.  The key to the construction are the ideals of $R$ called the \emph{Pierce kernels}; there is one for every $x\in \spec \B(R)$, namely $xR$; for $x\ne y$ in $\spec \B(R)$, $xR$ and $yR$ are comaximal (see Lemma~\ref{mp}(3), below).  The stalks of the Pierce sheaf are $R_x= R/xR$.  Recall that $\spec \B(R)$ is a boolean space, i.e.,  compact, Hausdorff and totally disconnected. The topology on the \emph{espace \'{e}tal\'{e}} (the disjoint union, $\bigcup_{x\in \spec \B(R)} R_x$)  of the sheaf  is generated by, for $r\in R$ and $e\in \B(R)$, $\{r_x\mid e\notin x\}$.  The key result is that the ring of global sections of the sheaf is isomorphic to $R$. There is a parallel theory for $R$-modules. The Pierce sheaf of a locally Specker $R$-algebra is studied in detail in \cite{BMO}.   \epar

\setcounter{section}{1} \setcounter{thm}{0}\noindent \tbf{1. Some preliminaries.} Some essential tools are introduced at this stage. The following is used often, without mention, and is folklore.  

\begin{lem}
\label{folk}  Suppose $A$ is an $R$-algebra with $R\sbq A$ and $A=R[B]$ where $B$ is a boolean subalgebra of $\B(A)$. If $a= \sum_{i=1}^k r_ie_i$ with $r_i\in R$ and $e_i\in B$, then there is an orthogonal set $\{f_1,\dots,f_l\}\sbq B$ and $r_j'\in R$, $j=1,\ldots, l$ such that each $r_j'\in \langle r_1, \ldots, r_k\rangle$ and $a=\sum_{j=1}^lr_j'f_j$.  Moreover, for each $j=1,\ldots, l$, there is $i=1,\ldots,k$ with $f_je_i=f_j$, and $\bigvee_{j=1}^l f_j = \bigvee_{i=1}^ke_i$.  \end{lem}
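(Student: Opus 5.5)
The idea is to refine $e_1,\dots,e_k$ to the atoms of the finite boolean subalgebra of $B$ they generate, and then collect coefficients atom by atom.

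For each nonempty $S\sbq\{1,\dots,k\}$ set
\[
f_S=\prod_{i\in S}e_i\prod_{i\notin S}(1-e_i).
\]
Each $f_S$ lies in $B$, since $B$ is a boolean subalgebra of $\B(A)$ and so is closed under products and complements. For $S\ne T$ there is an index lying in one set and not the other, so $f_S$ contains a factor $e_i$ and $f_T$ the factor $1-e_i$ (or vice versa). Hence $f_Sf_T=0$, and the family is orthogonal.

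Expanding $1=\prod_{i=1}^k\bigl(e_i+(1-e_i)\bigr)$ gives $1=\sum_{S\sbq\{1,\dots,k\}}f_S$, including $S=\vide$. Multiplying by $e_i$ kills every $f_S$ with $i\notin S$ and fixes every $f_S$ with $i\in S$, so
\[
e_i=\sum_{S\ni i}f_S .
\]
Substituting into $a$ gives
\[
a=\sum_{S\ne\vide}\Bigl(\sum_{i\in S}r_i\Bigr)f_S .
\]
Put $r'_S=\sum_{i\in S}r_i$, which lies in $\langle r_1,\ldots,r_k\rangle$.

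Now keep only those $S\ne\vide$ with $f_S\ne0$, and list them as $f_1,\dots,f_l$ (if none survive, then $a=0$ and the empty sum works). Each retained $f_S$ satisfies $f_Se_i=f_S$ for any $i\in S$, and $S$ is nonempty, so the required index $i$ exists.

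For the join, note that in $\B(A)$ the join of orthogonal idempotents is their sum. Hence $\bigvee_j f_j=\sum_{S\ne\vide}f_S=1-f_\vide=1-\prod_i(1-e_i)$. This equals $\bigvee_i e_i$ by De Morgan in the boolean algebra $B$.

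The argument is essentially bookkeeping. The only points needing care are excluding $S=\vide$, so that each retained $f_j$ lies under some $e_i$, and interpreting "orthogonal set" as requiring nonzero elements; the latter is handled by discarding the zero $f_S$.
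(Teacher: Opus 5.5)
Your proof is correct and complete: refining $e_1,\dots,e_k$ to the atoms $f_S$ of the finite subalgebra they generate, with $e_i=\sum_{S\ni i}f_S$, $r'_S=\sum_{i\in S}r_i$, and $\bigvee_{S\ne\emptyset}f_S=1-\prod_i(1-e_i)$, is the standard argument. The paper itself gives no proof and simply labels the lemma folklore, so your route is presumably the one intended. Discarding the zero $f_S$ is harmless but unnecessary, since the statement does not require the $f_j$ to be nonzero.
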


This is  a good moment to connect the algebras of \cite{BMMO} and \cite{BMO} with the point of view taken in this article.  

\begin{defn} \label{Stdef}  \cite[Definition~2.4]{BMMO} Let $R$ be a ring and $B$ a boolean algebra (with join, meet and negation denoted by $\wedge$, $\vee$ and $\neg$, respectively).  The \emph{Specker $R$-algebra defined by $R$ and $B$} is denoted $R[B]_{Sp}$ and is defined as follows: Consider the set $X=\{x_e\mid e\in B\}$ and the polynomial ring $R[X]$ modulo the ideal $J$ generated by $\{x_{e\wedge f} -x_ex_f, x_{e\vee f} -(x_e+ x_f-x_ex_f), x_{\neg e}-(1- x_e), x_0\}_{\forall e,f\in B}$.  Idempotents in $R[B]_{Sp}$ of the form $x_e+J$ are denoted $y_e$.\end{defn}

Notice that in a Specker $R$-algebra, $R[B]_{Sp}$, there are exactly two elements of $B$ that are identified with elements of $R$, namely, $0, 1\in B$ are identified with $0, 1\in R$, respectively. Moreover, for each $e\in B$, $e\ne 0$, $Re\cong R$ as $R$-modules (\cite[Lemma~2.6]{BMMO}).

When $R$ is indecomposable and $B$ is a boolean algebra, then, as boolean algebras, $\B(R[B]_{Sp}) \cong B$ (\cite[Theorem~3.6]{BMMO}). More generally,  $\B(R[B]_{Sp}) \cong B(R)[B]_{Sp}$, as boolean algebras (\cite[Lemma~3.2]{BMMO}(2)). 

If $A\in \igr{R}$ via $\phi\colon R\to \phi (R)$, then $A$ is a homomorphic image of a Specker $R$-algebra (\cite[Lemma~2.2]{BMMO}). This is made more specific in the following.

\begin{prop} \label{qSt}  If $\phi(R)\sbq T$ are rings and $E\sbq \B(T)$, then $A=R[E]$ is a quo\-tient of a Specker $R$-algebra.  The ker\-nel of $\psi\colon R[\B(A)]_{Sp}$ $\twoheadrightarrow A$ contains all $y_e-e$, where $e\in \B(A)$.\end{prop}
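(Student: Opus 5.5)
We use the universal property of the polynomial ring. Note that $A=R[E]$ means the subring of $T$ generated by $\phi(R)$ and $E$, viewed as an $R$-algebra via $\phi$. Every element of $\B(A)$ is an idempotent of $A$. Since $A$ is generated by $\phi(R)$ and $E\sbq\B(A)$, it is also generated by $\phi(R)$ and all of $\B(A)$. Take $B=\B(A)$ as the boolean algebra, with $e\wedge f=ef$, $e\vee f=e+f-ef$ and $\neg e=1-e$.

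Let $X=\{x_e\mid e\in \B(A)\}$ and define the $R$-algebra homomorphism $\Psi\colon R[X]\to A$ by $r\mapsto\phi(r)$ and $x_e\mapsto e$. It exists and is unique by the universal property of polynomial rings over $R$ in commuting indeterminates, since $A$ is commutative. Next, check that $\Psi$ kills each generator of the ideal $J$ in Definition~\ref{Stdef}:
\begin{itemize}
\item $x_{e\wedge f}-x_ex_f\mapsto ef-ef=0$;
\item $x_{e\vee f}-(x_e+x_f-x_ex_f)\mapsto (e+f-ef)-(e+f-ef)=0$;
\item $x_{\neg e}-(1-x_e)\mapsto (1-e)-(1-e)=0$;
\item $x_0\mapsto 0$.
\end{itemize}
Hence $J\sbq\ker\Psi$, and $\Psi$ factors through a homomorphism $\psi\colon R[\B(A)]_{Sp}=R[X]/J\to A$ with $\psi(y_e)=e$. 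In particular $y_e-e$ lies in the kernel of $\psi$, where $e$ is read as the element of $A$ it names. Equivalently, the kernel contains the formal differences $y_e-e$ once $R[\B(A)]_{Sp}$ and $A$ are compared via $\psi$.

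For surjectivity, the image of $\psi$ is an $R$-subalgebra of $A$ containing $\phi(R)$ and every $e\in\B(A)$, in particular all of $E$. Since $A=R[E]$ is the smallest such subring, $\psi$ is onto. So $A$ is a quotient of the Specker $R$-algebra $R[\B(A)]_{Sp}$.

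There is no real obstacle. The only point needing care is that the boolean operations on $\B(A)$ are exactly the ring expressions appearing in $J$, so that $\Psi$ respects the defining relations. A further subtlety is that the statement's "$y_e-e$" must be read through $\psi$, since $e$ and $y_e$ live in different rings. If a smaller presentation is wanted, one could instead use the boolean subalgebra generated by $E$, and the same argument works.
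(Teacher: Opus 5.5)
Your proof is correct and follows essentially the same route as the paper. The paper cites \cite[Lemma~2.5]{BMMO} for the existence of an $R$-algebra map $R[\B(A)]_{Sp}\to A$ that restricts to the identity on $\B(A)$, whereas you build that map directly from the universal property of $R[X]$ and check that it kills the generators of $J$. In both versions surjectivity holds because $A$ is generated over $\phi(R)$ by $\B(A)\spq E$.
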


\begin{proof} Put $B=\B(A)$ and call the equality $\iota\colon B \to \B(A)$. According to \cite[Lemma~2.5]{BMMO}, there is an $R$-algebra homomorphism from $\psi\colon R[B]_{Sp} \to A$ such that $\psi\res_B = \iota$. But $A$ is generated over $R$ by $B$.  Hence, $\psi$ is a surjection. 
\end{proof}

Note that, in the above, $\ker \psi$ contains all $y_e-e$, where $e\in \B(A)$, but may have more generators, as can be seen in some of the examples later in this article (e.g., Example~\ref{nsp}).

Also important for this paper is the notion of a \emph{locally Specker algebra} from \cite[Definition~3.8 and Theorem~3.13]{BMO}. Note, the following is not the definition in \cite{BMO}, but is shown there to be equivalent to it (\cite[Theorem~3.13]{BMO}). 

\begin{defn} \label{LSp} Let $R$ be a ring and $B$ a boolean algebra.  Suppose that there is a boolean algebra homomorphism $\gamma\colon \B(R) \to B$. Then the \emph{locally Specker algebra} defined by these data is $R[\{x_b\mid b\in B\}]/J$,  where the ideal $J$ is generated by: $x_{e\wedge f} -x_ex_f$, $x_{e\vee f} - (x_e+x_f-x_ex_f)$, $x_{1-e} -(1-x_e)$, $x_0$ and $x_{\gamma(g)} -gx_1$, for all $e,f\in B$ and all $g\in \B(R)$. The element $x_e+J$ is denoted by $z_e$, for all $e\in B$.   \end{defn}              

A locally Specker $R$-algebra $A$ has a Pierce structure that is fully described in  \cite[Theorem~3.3]{BMO}.  Note that, in this case, $\phi (R) = R/(\ker \gamma) R =\phi(R)$, and $\ker\phi$ is generated by idempotents and, hence, is a pure ideal. Moreover, using \cite[Theorem~3.3(2) and (3)]{BMO}, every Pierce stalk of $\phi(R)$ appears as a Pierce stalk of $A$, and, conversely, every Pierce stalk of $A$ is isomorphic to one of the Pierce stalks of $\phi(R)$.  

A recurrent theme is that under certain conditions on $A\in \igr{R}$, the conclusion will be that $A$ is a locally Specker $R$ algebra.   

A basic result is that if $R$ is any ring and $A \in IG_R$  then each Pierce stalk of $A$ is a homomorphic image of a Pierce stalk of $R$. This is easily proved, via Lemma~\ref{folk}, but is also \cite[Lemma~1.5]{BR}. 
 
\begin{lem}\label{addi}  \cite[Lemma~1.5]{BR}  If $A\in \igr{R}$, then each Pierce stalk of $A$ is the homomorphic image of a Pierce stalk of $R$. \end{lem}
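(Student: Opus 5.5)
The approach is to take $A\in\igr{R}$ with structure map $\phi\colon R\to A$, so $A=\phi(R)[E]$ for some set $E\sbq\B(A)$, and fix a Pierce kernel $yA$ with $y\in\spec\B(A)$. Put $x=\phi^{-1}(y)\cap\B(R)$. This is the preimage of a prime ideal of a boolean algebra under the boolean homomorphism $\phi\res_{\B(R)}\colon\B(R)\to\B(A)$, so $x\in\spec\B(R)$. The candidate map is the composite $R\to A\to A/yA=A_y$. Since $\phi(x)\sbq y$, the ideal $xR$ is sent into $yA$, so the composite factors through a ring homomorphism $\bar\phi\colon R_x=R/xR\to A_y$. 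It remains to show that $\bar\phi$ is surjective.

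For surjectivity, take $a\in A$. By Lemma~\ref{folk}, applied to $\phi(R)$ and the boolean subalgebra generated by $E$, we can write $a=\sum_{j=1}^l \phi(r_j)f_j$ with $\{f_1,\dots,f_l\}$ orthogonal idempotents of $A$. Adjoin $f_0=1-\bigvee f_j$ with coefficient $r_0=0$, so the $f_j$ become a complete orthogonal set. Because $y$ is a prime ideal of the boolean algebra $\B(A)$ and $\bigvee_{j=0}^l f_j=1\notin y$, exactly one $f_k$ lies outside $y$, and all the others lie in $y$.

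Then $f_j\in yA$ for $j\neq k$. Also $1-f_k\in y$, since $f_k(1-f_k)=0\in y$ and $f_k\notin y$. Hence, modulo $yA$,
\[ a\equiv \phi(r_k)f_k\equiv \phi(r_k)\pmod{yA}. \]
So $a+yA=\bar\phi(r_k+xR)$, and $\bar\phi$ is onto. Therefore $A_y\cong R_x/\ker\bar\phi$.

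The only real subtlety is the bookkeeping: checking that $x$ is genuinely a point of $\spec\B(R)$, which is immediate from the preimage argument, and that the idempotents produced by Lemma~\ref{folk} lie in $\B(A)$, which they do since they are built from $E$. If $\phi$ is not injective nothing changes, because Lemma~\ref{folk} is applied inside $A$ over the subring $\phi(R)$.
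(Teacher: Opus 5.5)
Your proof is correct, and it is the route the paper points to. The paper writes out no proof: it cites \cite[Lemma~1.5]{BR} and remarks that the result is easily proved via Lemma~\ref{folk}. That is exactly what you do: pull $y$ back to $x=\phi^{-1}(y)\cap\B(R)$, factor $R\to A_y$ through $R_x$, and use the orthogonal decomposition to show every element of $A$ is congruent modulo $yA$ to an element of $\phi(R)$. (The orthogonalization is not even essential, since each idempotent of $A$ is already congruent to $0$ or $1$ modulo $yA$.)
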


This prop\-erty even char\-ac\-ter\-izes elements of $\igr{R}$, by \cite[Pro\-po\-sition~3.2(3)]{BMO}, and is, thus, a key element in many proofs. 

A corollary of Lemma~\ref{addi} gives the first examples of properties preserved by adding idempotents. Recall that a \emph{clean ring} (see, e.g., \cite[Introduction]{NZ})  is one where each element is a sum of an idempotent and a unit.  A ring is clean if and only if each of its Pierce stalks is local (\cite[Proposition~1.2]{BS}). Moreover, a regular ring is characterized by one whose Pierce stalks are fields. Both of these properties are preserved by homomorphic images. These classes of rings will appear often in this article. 

Before stating the corollary, a lemma will be useful.  

\begin{lem} \label{vNr}  If $R\sbq S$ are regular rings and $y\in \spec \B(S)$ and $x=yS\cap \B(R)$, then $yS \cap R= xR$. \end{lem}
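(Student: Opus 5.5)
We need to show: if $R\sbq S$ are regular rings, $y\in\spec\B(S)$ and $x=yS\cap\B(R)$, then $yS\cap R=xR$.

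First we check that $x$ is a prime ideal of $\B(R)$, so that $xR$ is a Pierce kernel. Note that $yS\cap\B(S)=y$: if $e\in\B(S)$ and $e=\sum s_if_i$ with $f_i\in y$, then $e\le\bigvee f_i\in y$, so $e\in y$. Hence $x=y\cap\B(R)$, which is the inverse image of $y$ under the boolean homomorphism $\B(R)\to\B(S)$. It is therefore a prime (maximal) ideal of $\B(R)$. We do not actually need primality for the equality, but it explains why $xR$ is the right object.

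The inclusion $xR\sbq yS\cap R$ is immediate, since $x\sbq y$ and $R\sbq S$.

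For the reverse inclusion, take $r\in yS\cap R$. Because $R$ is regular, there is $r'\in R$ with $r^2r'=r$. Put $e=rr'$, an idempotent of $R$ with $re=r$. Since $yS$ is an ideal of $S$ and $r\in yS$, we get $e=rr'\in yS$. So $e\in yS\cap\B(R)=x$, and then $r=re\in xR$. This gives $yS\cap R\sbq xR$, completing the proof.

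The main obstacle, if any, is the step showing that the idempotent $e$ attached to $r$ lies in $yS$. This follows simply because $yS$ is an ideal and $e$ is a multiple of $r$. Regularity of $R$ is exactly what is needed here: it lets every element of $yS\cap R$ be absorbed by an idempotent of $R$ that is itself a multiple of $r$. Regularity of $S$ is not used for the equality; it only places the statement in context (for instance, the Pierce stalks are then fields).
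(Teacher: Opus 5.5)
Your proof is correct, but it takes a different route from the paper's.

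The paper argues structurally. Since $S$ is regular, $S/yS$ is a field, so the embedding $R/(yS\cap R)\to S/yS$ makes $yS\cap R$ a prime ideal of $R$, and hence a maximal one, because $R$ is regular. Since $xR$ is also maximal and $xR\sbq yS\cap R$, the two coincide. That argument uses regularity of both rings. It also tacitly needs $x$ to be a prime ideal of $\B(R)$, so that $xR$ is a Pierce kernel and therefore maximal; your first paragraph is exactly the verification of this point.

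Your elementwise argument uses only one fact: every ideal of a regular ring is generated by the idempotents it contains. It actually proves the more general identity $J\cap R=(J\cap\B(R))R$ for any ideal $J$ of any ring $S\supseteq R$ with $R$ regular. So it needs neither the regularity of $S$ nor the primality of $y$ or $x$. Your version is more general and self-contained. The paper's is shorter, given the standard facts that Pierce stalks of regular rings are fields and that primes in regular rings are maximal. It also matches the stalk-level reasoning where the lemma is applied, in the proof of Corollary~\ref{clean}(3). There, your version would make the regularity of $T$ unnecessary for that particular step, though it is still used elsewhere in that proof.
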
 
\begin{proof} As always $xR\sbq yS\cap R$. There is a monomorphism $R/(yS\cap R) \to S/yS$; since the latter is a field, $yS\cap R$ is a prime ideal in $R$, hence, maximal.  Moreover, $xR$ is maximal.  Hence, $yS\cap R=xR$. 
\end{proof}  

\begin{cor} \label{clean}  Let $R$ be a ring and $A \in\igr{R}$.

 (1)~If $R$ is clean then $A$ is clean.

(2)~If $R$ is regular then $A$ is regular.  In fact in (2), each Pierce stalk of $A$ is a copy of a Pierce stalk of $R$ and $A$ is a locally Specker $R$-algebra. 
 
 (3)~If $R$ is regular and $\phi(R)\sbq T \sbq A$, where $T$ is a ring, then $T$ is regular.  In fact, $T$ is a locally Specker $R$-algebra.  \end{cor}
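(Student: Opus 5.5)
For (1) and the first assertion of (2) I would work stalkwise through Lemma~\ref{addi}. If $R$ is clean, every Pierce stalk $R_x$ is local by \cite[Proposition~1.2]{BS}. By Lemma~\ref{addi}, each Pierce stalk of $A$ is a homomorphic image of some $R_x$. A homomorphic image of a local ring is local or zero, and Pierce stalks are nonzero. So every stalk of $A$ is local, and $A$ is clean.

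Likewise, if $R$ is regular, each $R_x$ is a field. A nonzero homomorphic image of a field is an isomorphic copy of it. Hence each Pierce stalk of $A$ is a copy of a Pierce stalk of $R$, and in particular a field, so $A$ is regular.

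For the locally Specker assertion in (2) I would use the characterization invoked via \cite[Theorem~3.3]{BMO}. Take $B=\B(A)$ and $\gamma=\phi\res_{\B(R)}\colon \B(R)\to B$. By Proposition~\ref{qSt}, $A$ is a quotient of the locally Specker algebra $L$ built from $(R,B,\gamma)$, and the task is to show the surjection $\psi\colon L\to A$ is injective. Since both rings are global sections of their Pierce sheaves and $\B(L)\cong B\cong\B(A)$, it suffices to check injectivity stalkwise. By \cite[Theorem~3.3]{BMO}, each stalk of $L$ at a point $y\in\spec B$ is the stalk of $\phi(R)$ at the corresponding point, namely $R_x$ with $x=\gamma\inv(y)$ (up to the pure kernel $(\ker\gamma)R$). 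The induced map $L_y\to A_y$ is therefore a surjection from a field onto a nonzero ring, hence an isomorphism.

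The main obstacle is this identification of the stalk maps. One must check that the stalk of $A$ at $y$ really is the image of $R_x$ for this particular $x=\gamma\inv(y)$. That follows from the construction in Lemma~\ref{addi}, using Lemma~\ref{folk} to write elements as orthogonal combinations. One must also check that a map of sheaves that is an isomorphism on all stalks gives an isomorphism of global sections.

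For (3), $T$ contains $\phi(R)$ and is contained in the regular ring $A$ from (2). I would first show that $T$ is generated over $\phi(R)$ by idempotents. Writing $t\in T$ in orthogonal form $t=\sum r_jf_j$ by Lemma~\ref{folk} is not directly inside $T$. Instead I would use regularity: any subring of a regular ring that contains the quasi-inverses is regular, and I would show that $T$ is closed under them. Lemma~\ref{vNr} suggests the intended route. Since $\phi(R)$ and $A$ are regular, $yA\cap\phi(R)=x\phi(R)$ for $y\in\spec\B(A)$. Passing to stalks, $T_{y\cap T}$ sits between $R_x$ and $A_y\cong R_x$, so it is that field. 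Hence $T$ has field stalks and is regular. Then $T\in\igr{R}$ because a regular ring is generated by $\phi(R)$ and its idempotents: each stalk equals the image of $R_x$, so each $t$ locally agrees with some $\phi(r)$, and compactness gives $t=\sum\phi(r_j)e_j$ with $e_j\in\B(T)$. Applying (2) to $T$ gives the locally Specker conclusion. The delicate step is the compactness patching, which relies on the Pierce sheaf topology.
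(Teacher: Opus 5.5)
Your (1) and the first assertion of (2) match the paper's argument. For the locally Specker claim in (2), the paper simply invokes the stalk criterion \cite[Theorem~3.3(3)]{BMO}. Your comparison with the universal locally Specker algebra $L$ built from $(R,\B(A),\phi$ restricted to $\B(R))$ is a valid unwinding of that criterion; you still rely on \cite[Theorem~3.3]{BMO} for the stalks of $L$. So that part is fine.

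The gap is in (3), at the sentence ``$T_{y\cap T}$ sits between $R_x$ and $A_y\cong R_x$, so it is that field.'' Put $w=y\cap\B(T)$. What you actually have are natural maps $\alpha\colon R_x\to T_w$ and $\beta\colon T_w\to A_y$ with $\beta\alpha$ an isomorphism. This makes $\alpha$ injective and $\beta$ surjective. It only gives $T_w=\alpha(R_x)\oplus\ker\beta$ additively; nothing forces $\ker\beta=(yA\cap T)/wT$ to vanish.

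Proving $yA\cap T=wT$ is exactly what Lemma~\ref{vNr} would give \emph{if} $T$ were already known to be regular. Equivalently, you need the support idempotent in $A$ of each $t\in T$ to lie in $T$, which is the closure under quasi-inverses you announced but never proved. So the argument is circular. Everything after it ($T$ regular, $T\in\igr{R}$ by patching, applying (2)) rests on this step. Note that the patching you flag as delicate is routine; this stalk identification is the real difficulty.

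The missing idea, which is how the paper proceeds, is to prove regularity of $T$ first and independently of stalks:
\begin{itemize}
\item $A$ is integral over $\phi(R)$, hence so is $T$.
\item If $\mf{p}\subsetneq\mf{m}$ in $\spec T$, both contract to the same prime of $\phi(R)$, because every prime of the regular ring $\phi(R)$ is maximal. This contradicts incomparability, so $T$ is zero-dimensional.
\item $T$ is reduced as a subring of $A$, hence regular.
\end{itemize}
Then $T_w$ is a field, so $\beta$ is injective, hence an isomorphism, and therefore so is $\alpha$. After that, your patching argument (or \cite[Theorem~3.3(3)]{BMO}) shows that $T\in\igr{R}$ and that $T$ is locally Specker.

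Your quasi-inverse route could also be completed. If the idempotent $e\in\B(A)$ with $tA=eA$ lies in $T$, then $t+1-e$ is a unit of $A$ lying in $T$, hence a unit of $T$ by integrality, and $t=t^2(t+1-e)\inv$. But showing $e\in T$ needs its own argument, for example interpolation stalkwise over $\B(\phi(R))$.
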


\begin{proof} (1)~By Lemma~\ref{addi}, each Pierce stalk of $A$ is a homomorphic image of a local ring, and is, hence, local. (2)~Similarly, each homomorphic image of a field is the same field. Moreover, \cite[Theorem~3.3(3) and Definition~3.4]{BMO} then says that $A$ is a locally Specker $R$-algebra. (3)~As in (2), $A$ is regular (since $\phi(R)$ is) and integral over $\phi(R)$.  Hence, $T$ is integral over $\phi(R)$. If $\mf{p}\in \spec T$ is not maximal, then there is a maximal $\mf{m}$ with $\mf{p}\subset \mf{m}$. But, $\mf{p}\cap \phi(R)$ is maximal (because in a regular ring all primes are maximal) and so $\mf{p}\cap \phi(R) = \mf{m}\cap \phi(R)$, which is impossible in an  integral extension.  Thus, $\mf{p}$ is maximal. Since $T$ is semiprime, $T$ is regular and the maximal ideals are precisely the Pierce kernels. 

Let $x\in \spec \B(T)$ and $y=x\cap \B(\phi(R))$. There is $z\in \spec \B(A)$ with $z\cap \B(T) =x$. Since $\phi(R)$ is regular, $xT\cap \phi(R)=y\phi(R)$ (Lemma~\ref{vNr} applied to $\phi(R)$ and $T$).  Put $\alpha \colon \phi(R)/y\phi(R)\to T/xT$, an injection, and $\beta\colon T/xT\to A/zA$. Then, $\beta\alpha\colon \phi(R)_y\to A_z$ is an isomorphism since $A$ is a locally Specker $R$-algebra. Since $T_x$ is a field, $\beta$ is a monomorphism, making $\beta$ an isomorphism. This makes $\alpha$ an isomorphism as well.  Again by \cite[Theorem~3.3(3)]{BMO}, $T$ is a locally Specker $R$-algebra.
  \end{proof}

The following facts about Pierce stalks and kernels will be useful.  The first statement in the lemma will be applied to Pierce kernels.

\begin{lem} \label{mp} Let $R$ be a ring and $x\in \spec \B(R)$.\\ (1)~Let $I$ be a pure ideal.  If $\mf{p}\in \spec R$ contains $I$ and if $\mf{q} \in \spec R$ with $\mf{q}\sbq \mf{p}$, then $I\sbq \mf{q}$.  In particular, if $\mf{p}$ is a minimal prime over $I$, $\mf{p}\in \minspec R$. \\  (2)~The Pierce stalk $R_x$ is a classical localization of $R$. Moreover,  $R_x$ is a flat $R$-module.\\  (3)~The Pierce kernels are co-maximal.\\  (4)~For $\mf{p}\in \spec R$, there is exactly one $x\in \spec \B(R)$ where $\mf{p}+xR \ne R$.  \end{lem}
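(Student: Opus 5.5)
Each of the four parts reduces to the element-wise description of a pure ideal, $a\in I\Rightarrow \exists\, b\in I$ with $ab=a$, together with the elementary structure of the Pierce kernel $xR$. I would prove them in the order (1), (2), (3), (4), since (4) uses (3).

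\textbf{Part (1).} Let $I$ be pure, $I\sbq \mf{p}$ and $\mf{q}\sbq\mf{p}$ prime. For $a\in I$ choose $b\in I$ with $ab=a$, so $a(1-b)=0\in\mf{q}$. Since $b\in I\sbq\mf{p}$, we have $1-b\notin\mf{p}$, hence $1-b\notin\mf{q}$. Primality of $\mf{q}$ then forces $a\in\mf{q}$, so $I\sbq\mf{q}$. For the "in particular": if $\mf{p}$ is minimal over $I$ and $\mf{q}\sbq\mf{p}$ is any minimal prime of $R$, then $I\sbq\mf{q}$. Minimality of $\mf{p}$ over $I$ gives $\mf{q}=\mf{p}$, so $\mf{p}\in\minspec R$.

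\textbf{Part (2).} Let $S=\{1-e\mid e\in x\}$, a multiplicatively closed set because $x$ is an ideal of $\B(R)$. The kernel of $R\to S^{-1}R$ consists of those $r$ with $r(1-e)=0$ for some $e\in x$, that is $r=re\in xR$. Conversely, if $r\in xR$, then $r=\sum r_ie_i$, and $e=\bigvee e_i\in x$ satisfies $r=re$. So the kernel is exactly $xR$.

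Every element of $S$ becomes $1$ modulo $xR$. Hence the image of $S$ in $R/xR$ consists of units, and the map $R/xR\to S^{-1}R$ is an isomorphism. Thus $R_x\cong S^{-1}R$, and since localizations are flat, $R_x$ is flat over $R$.

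Two cautions here. First, $S$ contains no zero divisors only in special cases, so "classical localization" should be read as localization at this multiplicative set, not necessarily a classical ring of quotients. Second, the identification with a localization needs the observation that for any $r$ and any $s=1-e\in S$, the element $r/s$ equals $r/1$, since $s\cdot s=s$.

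\textbf{Part (3).} If $x\ne y$, pick $e\in x\setminus y$. Since $y$ is a prime ideal of $\B(R)$ and $e\notin y$, we get $1-e\in y$. Then $1=e+(1-e)\in xR+yR$.

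\textbf{Part (4).} For existence, set $x=\{e\in\B(R)\mid e\in\mf{p}\}$. For each idempotent $e$, exactly one of $e$ and $1-e$ lies in $\mf{p}$, because $e(1-e)=0$. This makes $x$ a prime ideal of $\B(R)$ with $xR\sbq\mf{p}$, so $\mf{p}+xR=\mf{p}\ne R$. Uniqueness follows from (3): if $\mf{p}+xR$ and $\mf{p}+yR$ are both proper with $x\ne y$, choose $e$ as in (3). Then $e\in\mf{p}+xR$ and $1-e\in\mf{p}+yR$, so $\mf{p}+xR+yR=R$. To derive a contradiction from this, apply the existence argument: a proper $\mf{p}+xR$ lies in a maximal ideal $\mf{m}$, and the idempotents in $\mf{m}$ form a prime ideal of $\B(R)$ containing $x$, hence equal to $x$ since primes in a boolean algebra are maximal. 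So the idempotents in $\mf{p}$ are contained in $x$, and by the dichotomy above they form a prime ideal of $\B(R)$, so they equal $x$. The same holds for $y$, which forces $x=y$, a contradiction.

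The only point needing care is this uniqueness step in (4). It is resolved by the fact that for any proper ideal the idempotents it contains are determined by that ideal, via the maximal-ideal argument just given.
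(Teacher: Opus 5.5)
Your proof is correct and follows the paper's argument in all four parts; in (2) you also supply the check that the kernel of $R\to S^{-1}R$ is exactly $xR$ (your $S$ coincides with the paper's $\B(R)\setminus x$), which the paper leaves implicit. In the uniqueness half of (4), the appeal to (3) and to $\mf{p}+xR+yR=R$ is never actually used, and the paper's version is shorter: with $x_0=\mf{p}\cap\B(R)$ and $x\ne x_0$, pick $e\in x_0\setminus x$, so $1-e\in x$ and $1=e+(1-e)\in\mf{p}+xR$.
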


\begin{proof} (1) Here, if $a\in I$ there is $b\in I$ with $a(1-b) = 0$.  Hence, $a\in \mf{q}$ or $1-b \in \mf{q}\sbq \mf{p}$.  The latter means $1\in \mf{p}$, a contradiction. Hence, $a\in \mf{q}$. The second statement of (1) follows.   (2)~The set $S=\B(R)\setminus x$ is a multiplicatively closed set and $R_x\cong RS\inv$. The last statement of (2) follows since all classical localizations are flat (e.g., \cite[Proposition~10.8]{Ste}).   (3)~If $x, y\in \spec \B(R)$, $x\ne y$, there is $e\in y\setminus x$.   Then, $1-e\in x$ showing that $1\in xR+ yR$.  (4)~For $\mf{p}\in \spec R$, it is easy to see that $x_0=\mf{p}\cap \B(R)$ is in $\spec \B(R)$.  By (3), it is the only Pierce kernel in $\mf{p}$.  For $x\ne x_0$ there is $e\in x_0\setminus x$, then $\mf{p}+xR = R$, while $\mf{p}+x_0R = \mf{p}$.  
\end{proof} 

Of course, a Pierce stalk $R_x$ of a ring $R$ is also flat because $R_x = R/xR$, and $xR$ is clearly a pure ideal.  The second part of Lemma~\ref{mp}(1) is \cite[Proposition~1.6]{DeM}.

Since every semiprime ring $R$ can be embedded in a regular Baer ring, the following simple lemma turns out to be useful.

\begin{lem} \label{Betal}   Let $R$ be a ring and $R\sbq T$, where $T$ is a (i)~Baer ring, (ii)~a wB ring, (iii)~an awB ring.  Suppose the rings $R\sbq S\sbq T$ are such that $\B(S) = \B(T)$, then $S$ inherits the properties (i), (ii) or (iii), respectively.  This applies, in particular, if $S = R[\B(T)]$. \end{lem}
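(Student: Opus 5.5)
The plan is to compute annihilators in $S$ by intersecting annihilators in $T$ with $S$. The single tool is the identity, for any subset $Y\sbq S$,
\[
\ann_S Y = \ann_T Y \cap S,
\]
which holds simply because $S\sbq T$ and multiplication in $S$ is the restriction of that in $T$. The key auxiliary observation is that if $e\in \B(T)=\B(S)$, then
\[
eT\cap S = eS.
\]
One inclusion is clear. For the other, if $s\in S$ and $s=et$ with $t\in T$, then $es=e^2t=et=s$, so $s=es\in eS$ because $e\in S$. More generally, for $E\sbq \B(T)$ we get $ET\cap S = ES$. Indeed, if $s=\sum_{i=1}^n e_it_i$ with $e_i\in E$, let $f=\bigvee_{i=1}^n e_i$, which lies in $\B(T)=\B(S)$. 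Then $fe_i=e_i$, so $fs=s$. Writing $f$ as a boolean combination of the $e_i$, for instance $f = \sum_i e_i(1-e_1)\cdots(1-e_{i-1})$, gives $s = fs = \sum_i e_i\bigl((1-e_1)\cdots(1-e_{i-1})s\bigr)$. Each coefficient lies in $S$, since the idempotents $1-e_j$ and $s$ all lie in $S$. Hence $s\in ES$.

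With these facts the three cases follow directly.

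(i) Suppose $T$ is Baer and $Y\sbq S$. Then $\ann_T Y = eT$ for some $e\in\B(T)=\B(S)$, so $\ann_S Y = eT\cap S = eS$.

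(ii) The argument is the same with $Y=\{a\}$ for a single element $a\in S$.

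(iii) Suppose $T$ is awB and $a\in S$. Then $\ann_T a = ET$ for some $E\sbq \B(T)=\B(S)$, so $\ann_S a = ET\cap S = ES$.

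For the final sentence of the statement, take $S=R[\B(T)]$. Here $\B(T)\sbq \B(S)\sbq \B(T)$, the first inclusion because $\B(T)\sbq S$ and the second because $S\sbq T$, so $\B(S)=\B(T)$ and the hypothesis holds.

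The only point needing any care is the equality $ET\cap S=ES$ in case (iii), which is handled by the orthogonalization above; this is in the spirit of Lemma~\ref{folk}, though that lemma is stated for coefficients in $R$.
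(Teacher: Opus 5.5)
Your proof is correct and follows the same route as the paper: the paper's argument is just the identity $\ann_S Y=\ann_T Y\cap S$ combined with $\B(S)=\B(T)$. You additionally spell out the step the paper leaves implicit, namely $ET\cap S=ES$ for $E\sbq\B(T)$, which is needed to pass from annihilators in $T$ to annihilators in $S$.
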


\begin{proof} Always, if $X$ is a subset of $S$, $\ann_SX= \ann_TX \cap S$. The lemma follows because each of the three properties of $T$ are defined by annihilators and idempotents and $\B(S) = \B(T)$. 
\end{proof} 

In fact more can be said when $T=Q(R)$.
 
\begin{cor} \label{empp}    Every semiprime ring $R$ can be embedded in a Baer ring $S$ in $\igr{R}$.  If $S$ is a subring with $R\subset S\sbq Q(R)$ and $\B(S) = \B(Q(R))$, then $S$ is a Baer ring. Moreover, $S_R$ is a module essential extension of $R_R$ (i.e., for $0\ne s\in S$, $sR\cap R\ne\bz$). \end{cor}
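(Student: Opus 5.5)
The plan rests on standard facts about $Q(R)$ for semiprime $R$: $Q(R)$ is a regular ring, it is Baer (every annihilator ideal of $Q(R)$ is generated by an idempotent, since $Q(R)$ is rationally complete and regular), and $R_R$ is a dense, hence essential, submodule of $Q(R)_R$ (see \cite{Ste} or \cite{L}). With these, Lemma~\ref{Betal} gives both Baer statements.

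First I prove the second assertion. Let $R\subset S\sbq Q(R)$ with $\B(S)=\B(Q(R))$. Take $T=Q(R)$, which is Baer, and apply Lemma~\ref{Betal}(i) to the chain $R\sbq S\sbq T$. Since $\B(S)=\B(T)$, the lemma shows $S$ is Baer.

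For the first assertion, put $S=R[\B(Q(R))]$. This lies in $\igr{R}$ by construction, and $\B(S)=\B(Q(R))$ because every idempotent of $S$ is an idempotent of $Q(R)$ and conversely all of $\B(Q(R))$ was adjoined. So $S$ is Baer by the previous step; this is the final sentence of Lemma~\ref{Betal}. If $R$ is already Baer with $\B(R)=\B(Q(R))$, then $S=R$ and the strict inclusion fails, but that case is harmless for the embedding statement.

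For module essentiality, let $0\ne s\in S\sbq Q(R)$. Elements of $Q(R)$ are represented by homomorphisms $f\colon D\to R$ with $D$ a dense ideal, and $s$ is the class of such an $f$. Then $sD\sbq R$. Since $s\ne 0$, some $d\in D$ has $sd\ne 0$, and $sd\in sR\cap R$. Hence $sR\cap R\ne\bz$.

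The only delicate point is citing correctly that $Q(R)$ is regular Baer for semiprime $R$. I would quote the standard result that $Q(R)$ is a rationally complete regular ring whose idempotents correspond to annihilator ideals of $R$. If that citation is unavailable, I would instead show directly that for $X\sbq Q(R)$, $\ann X$ is an annihilator ideal in a regular ring with the injective-hull property, and is therefore generated by an idempotent. Everything else is formal.
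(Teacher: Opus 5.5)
Your proposal is correct and follows the paper's proof. The ingredients match: $Q(R)$ is regular and self-injective, hence Baer; Lemma~\ref{Betal} transfers the Baer property to any intermediate $S$ with $\B(S)=\B(Q(R))$, in particular to $S=R[\B(Q(R))]\in\igr{R}$; and essentiality comes from $Q(R)_R$ being an essential extension of $R_R$. The only difference is that you verify that last fact directly from the dense-ideal description of elements of $Q(R)$, where the paper simply cites it.
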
 

\begin{proof} Since $R$ is semiprime, its complete ring of quotients $Q(R)$ is regular and self-injective (e.g., \cite[Proposition~2.1, page~247]{Ste}), hence, $Q(R)$ is a Baer ring. The rest is from Lemma~\ref{Betal}. The last statement follows since $Q(R)_R$ is a module essential extension of $R_R$.
\end{proof} 

\setcounter{section}{2} \setcounter{thm}{0}
\noindent\tbf{2. Some Pierce properties.} This section has two purposes.  The first is to examine properties of rings and how they relate to the Pierce sheaf.  The second is that some of the properties chosen are of interest in later sections and Pierce tools will be used there.  Recall that in the Pierce sheaf for $R$, if an equation holds in a Pierce stalk $R_x$ then it holds over a clopen set containing $x$ in the base space $\spec \B(R)$ (\cite[Lemma~4.3]{P}). If this is true for all $x$, then compactness of $\spec \B(R)$ allows an equation to hold globally.  

Before looking at the main topic of this section, here are two useful constructions of Pierce sheaves from given indecomposable rings. First the direct product and then a variant.

\begin{con} \label{con1}  Let $R_\alpha$, $\alpha \in A$, be a set of  indecomposable rings.  Put $P =\prod_{\alpha \in A}R_\alpha$. The Pierce sheaf of $P$ will be described in the next paragraph.  \end{con}

(This situation is dealt with in \cite[the start of \S3]{Ao}, even though the Pierce sheaf is not mentioned there.) The elements of $\B(P)$ can be identified with the subsets of $A$, making $\spec \B(P)$ homeomorphic to the space $\beta A$, the \v{C}ech-Stone compactification of the discrete space $A$,  that is, the space of ultrafilters on $A$. For a fixed ultrafilter associated with $\alpha\in A$, the Pierce stalk of $P$ is the ring $R_\alpha$. For a free ultrafilter $\mc{U}$, the corresponding Pierce stalk of $P$ is an ultraproduct of the rings $R_\alpha$.  

There are variants of this when the $R_\alpha$ have a  common subring $R$. 

\begin{con} \label{con2}  With $P$ as in Construction~\ref{con1}, suppose that the $R_\alpha$ all have a common subring $R$. 

(1)~Put $$S=\{(r_\alpha)\in P\mid \text{$r_\alpha \in R$ for all but finitely many $\alpha\in A$}\}\;.$$  The Pierce sheaf of $S$ is described.

(2)~Put $$T= \{(r_\alpha)\in P\mid \text{for some $r\in R$, $r_\alpha=r$ for all but finitely many $\alpha$}\}.$$  The Pierce sheaf of $T$ is described.\end{con}

 \begin{proof} (1)~Once again, $\B(S)$ can be identified with the subsets of $A$, that is, $\B(S) = \B(P)$. However, since no finite set is in an ultrafilter, the Pierce stalks of $S$ associated with fixed ultrafilters are again the $R_\alpha$, while those associated with free ultrafilters are ultrapowers of $R$. 

(2)~This is \cite[Proposition~2]{BS}. The underlying space is $A\cup \{\infty\}$,  the one-point compactification of the discrete $A$.  For $\alpha\in A$, the Pierce stalk is $R_\alpha$ and $T_\infty = R$. The function required by the reference $d\colon R\to P/\bigoplus R_\alpha$ is the diagonal followed by dividing out by the direct sum. \end{proof} 

Note that any property of a ring $R$ that is preserved by homomorphic images, goes from $R$ to its Pierce stalks. 

\begin{defn} \label{PP}  (1)~A property $\mc{P}$ of rings is called a \emph{ring Pierce property} if a ring $R$ has the property if and only if each of its Pierce stalks has it.  (2)~A property $\mc{P}$ of $R$-modules is called a \emph{module Pierce property} if a module $M_R$ has the property if and only if, for each $x\in \spec \B(R)$, $(M_x)_{R_x}$ has it. \end{defn}

The next proposition has some of the known ring Pierce properties. Recall that a \emph{pm-ring} is one in which every prime ideal is contained in a unique maximal ideal. Every $\cx$ is such a ring (\cite[14.3(c)]{GJ}).  

\begin{prop} \label{known}    The follo\-wing are ring Pierce prop\-er\-ties: \\(i)~clean ring, (ii)~regular ring, (iii)~awB ring, (iv)~pm-ring, and  (v)~semi\-prime ring. \end{prop}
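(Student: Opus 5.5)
Each of the five properties has two directions to check. In the easy direction ("$R$ has $\mc P$ implies each $R_x$ has $\mc P$"), (i), (ii) and (iv) are preserved by homomorphic images, as the paper notes. For (i) and (ii) the paper has already quoted the characterizations "clean iff all Pierce stalks are local" (\cite[Proposition~1.2]{BS}) and "regular iff all Pierce stalks are fields". Hence (i) and (ii) are ring Pierce properties immediately, since local rings and fields are themselves clean and regular. The remaining work concerns (iii), (iv) and (v). The main tools are that $R_x=RS\inv$ with $S=\B(R)\setminus x$ (Lemma~\ref{mp}(2)), and the Pierce fact that an equation holding in $R_x$ holds on a clopen neighbourhood of $x$, so that it can be patched globally by compactness.

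\textbf{Semiprime (v).} If $R$ is semiprime and $r_x$ is nilpotent in $R_x$, then $r^n\in xR$, so $r^n=r^ne'$ with $e'\in x$. Put $e=1-e'\notin x$; then $(re)^n=0$, so $re=0$ and $r_x=0$. Conversely, if $r^n=0$ in $R$, then each $r_x$ is nilpotent, hence zero. Thus $r$ vanishes in every stalk and $r=0$, since $R$ is the ring of global sections.

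\textbf{pm-ring (iv).} For a prime $\mf p$ of $R$, Lemma~\ref{mp}(4) gives a unique $x$ with $xR\sbq\mf p$, and every maximal ideal containing $\mf p$ also contains $xR$, by the same uniqueness. So primes and maximal ideals over $\mf p$ correspond exactly to those of $R_x$ over $\mf p/xR$. Hence uniqueness of the maximal ideal above $\mf p$ in $R$ is equivalent to the same statement in $R_x$. This gives both directions at once.

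\textbf{awB (iii).} This is the case I expect to be the main obstacle, because idempotents of $R_x$ are trivial (stalks are indecomposable). An indecomposable ring is awB iff annihilators of elements are $\bz$ or the whole ring, i.e.\ iff it is a domain-like ring in which $ab=0$ forces $a=0$ or $b=0$. I would argue through this description.

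For $R$ awB implies $R_x$ awB: suppose $a_xb_x=0$, so $abe=0$ for some $e\notin x$. Then $be\in\ann a=ER$, so $be=\sum f_ir_i$ with $f_i\in E$. Either some $f_i\notin x$ — then $af_i=0$ gives $a_x=0$ — or all $f_i\in x$, giving $b_x=0$.

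For the converse, fix $a$ and show $\ann a$ is generated by the idempotents it contains. Take $b$ with $ab=0$. At each $x$, either $a_x=0$, giving a clopen $U$ around $x$ with $ae=0$ for an idempotent $e\notin x$, or $b_x=0$, giving $be'=0$ with $e'\notin x$. By compactness, finitely many such idempotents cover the space, and after orthogonalizing them (Lemma~\ref{folk}-style) one gets $b=\sum bg_j$. Each term has either $ag_j=0$, so $bg_j\in\langle \B(R)\cap\ann a\rangle$, or $bg_j=0$. Hence $b\in ER$ with $E=\B(R)\cap\ann a$. The delicate point is handling the orthogonal decomposition cleanly, but it is routine.
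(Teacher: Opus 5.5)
Your proof is correct in substance, and for (iii)--(v) it is more self-contained than the paper's. The paper handles every part by quoting a stalkwise characterization and then noting that the relevant indecomposable rings are exactly the local rings, fields or domains. It uses \cite[Proposition~1.2]{BS2} for (i), the field characterization for (ii), \cite[Theorem~2.2]{NR} (awB iff every $R_x$ is a domain) for (iii), and the element-wise criterion of \cite[Theorem~4.1]{Co} or \cite[Remark~3.1]{BR} for (iv); (v) is dismissed as easy. For (iii) you in effect reprove \cite[Theorem~2.2]{NR}. The forward direction writes $be=\sum f_ir_i\in ER$ and splits on whether some $f_i\notin x$. The converse uses compactness and orthogonalization to show $\ann a$ is generated by $\ann a\cap\B(R)$. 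For (iv) you avoid the element-wise criterion entirely. Each prime $\mf{p}$ contains exactly one Pierce kernel $xR$ (Lemma~\ref{mp}(4)), so the primes and maximal ideals of $R$ above $\mf{p}$ are exactly those of $R_x$ above $\mf{p}/xR$, which gives both directions at once. Your semiprime argument fills in what the paper calls easy. The paper's version is shorter; yours shows that nothing beyond Lemma~\ref{mp} and standard Pierce patching is needed.

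One slip to fix in (i) and (ii): the justification ``local rings and fields are themselves clean and regular'' points the wrong way. Together with the homomorphic-image remark, it only re-proves the easy direction. The implication ``every $R_x$ clean (regular) $\Rightarrow$ $R$ clean (regular)'' requires that an \emph{indecomposable} clean ring is local and an indecomposable regular ring is a field. This is the ``and reciprocally'' in the paper's proof. Both facts take one line:
\begin{itemize}
\item If $\B(D)=\{0,1\}$ and $D$ is clean, then each $a=e+u$ with $e\in\{0,1\}$, so $a$ or $1-a$ is a unit, and $D$ is local.
\item If $D$ is regular, indecomposable and $r\ne 0$, then $rr'$ is a nonzero idempotent, hence $rr'=1$.
\end{itemize}
Equivalently, apply the quoted characterizations to $R_x$ itself, which is its own unique Pierce stalk.
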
 

\begin{proof} (i)~This is \cite[Proposition~1.2]{BS2}, which says that $R$ is clean if and only if each $R_x$ is local. However, an indecomposable clean ring is a local ring and reciprocally. (ii)~$R$ is regular if and only if each $R_x$ is a field. Again an indecomposable regular ring is exactly a field. (iii)~By \cite[Theorem~2.2]{NR} $R$ is awB if and only if each $R_x$ is a domain. Again, an indecomposable awB ring is a domain and reciprocally. (iv)~\cite[Theorem~4.1]{Co} has an element-wise characterization of pm-rings that makes it clear that this is a ring Pierce property; but see also \cite[Remark~3.1]{BR} for another approach. (v)~Both  directions of ``not semiprime'' are easy.
\end{proof} 

Part (v) of \ref{known} has an immediate corollary.  If $A\in \igr{R}$ is a locally Specker algebra and $\phi(R)$ is semiprime, then $A$ is semiprime.  This is because the Pierce stalks of $A$ are from among those of $\phi(R)$, and they are all semiprime. In the same vein, since a homomorphic image of a pm-ring is a pm-ring, then if $\phi(R)$ is a pm-ring and $A\in \igr{R}$, then $A$ is a pm-ring; similarly when $\phi(R)$ is a clean ring.

One of the important module Pierce properties is the flatness of an $R$-module.   The next proposition will play a major role in later sections.

\begin{prop} \label{flP}   For a ring $R$ and an $R$-module $M$, the following are module Pierce properties: (1)~flatness of $M_R$, and (2)~faithful flatness of $M_R$.  \end{prop}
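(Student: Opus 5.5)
Here $M_x$ denotes $M/xM\cong M\otimes_R R_x$, a module over the stalk $R_x=R/xR$. For a prime $\mf{p}$ of $R$, write $x_{\mf{p}}=\mf{p}\cap\B(R)$; by Lemma~\ref{mp}(4) this is the unique point of $\spec\B(R)$ whose Pierce kernel lies in $\mf{p}$. The plan is to reduce both statements to the local criteria, using that the Pierce stalks sit between $R$ and its localizations at primes.

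\emph{Forward direction.} If $M_R$ is flat, then $M\otimes_R R_x$ is flat over $R_x$ by base change; faithful flatness is likewise preserved by base change. So (1) and (2) pass from $M$ to every stalk.

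\emph{Converse for (1).} I would use the criterion that $M_R$ is flat if and only if $M_{\mf{p}}$ is flat over $R_{\mf{p}}$ for every $\mf{p}\in\spec R$. Fix $\mf{p}$ and put $x=x_{\mf{p}}$. Every $e\in\B(R)\setminus x$ is a unit in $R_{\mf{p}}$. Since $R_x=R[S^{-1}]$ with $S=\B(R)\setminus x$ (Lemma~\ref{mp}(2)), the ring $R_{\mf{p}}$ is a localization of $R_x$, namely at the prime $\bar{\mf{p}}=\mf{p}/xR$. Hence
\[
M_{\mf{p}}\cong M\otimes_R R_{\mf{p}}\cong (M\otimes_R R_x)\otimes_{R_x}(R_x)_{\bar{\mf{p}}}=(M_x)_{\bar{\mf{p}}}.
\]
If $M_x$ is flat over $R_x$, then its localization $(M_x)_{\bar{\mf{p}}}$ is flat over $(R_x)_{\bar{\mf{p}}}=R_{\mf{p}}$. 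Thus every $M_{\mf{p}}$ is flat, and $M_R$ is flat. The step needing the most care is checking that $R_{\mf{p}}\cong (R_x)_{\bar{\mf{p}}}$ as $R$-algebras. This follows because $xR\sbq\mf{p}$ and $xR$ is killed in $R_{\mf{p}}$: for $e\in x$, the element $1-e\notin\mf{p}$ is invertible and $e(1-e)=0$, so $e=0$ in $R_{\mf{p}}$.

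\emph{Converse for (2).} Assuming each $M_x$ is faithfully flat, $M$ is flat by (1). It remains to show $\mf{m}M\neq M$ for each maximal ideal $\mf{m}$ of $R$. Let $x=x_{\mf{m}}$. Then $\mf{m}\spq xR$, so $\bar{\mf{m}}=\mf{m}/xR$ is a maximal ideal of $R_x$. Suppose $\mf{m}M=M$. Passing to $M_x=M/xM$ gives $\bar{\mf{m}}M_x=M_x$, which contradicts faithful flatness of $M_x$ over $R_x$. Hence $M$ is faithfully flat.

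I expect the only real obstacle to be the bookkeeping identification $M_{\mf{p}}\cong (M_x)_{\bar{\mf{p}}}$. The remaining steps are the standard local criteria for flatness and for faithful flatness.
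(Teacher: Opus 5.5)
Your proof is correct, but for part (1) and for the forward implications it takes a different route from the paper.

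\textbf{How the paper argues.} For (1) the paper uses the equational criterion for flatness.
\begin{itemize}
\item Forward direction: a relation $\sum_j a_jr_j\in xM$ is pulled back to a relation $e\sum_j a_jr_j=0$ in $M$, with $e\in\B(R)\setminus x$.
\item Converse: a relation in $M$ is solved in each stalk $M_x$. The solution is lifted to $M$ after multiplying by some $e(x)\in\B(R)\setminus x$. Compactness of $\spec\B(R)$ then gives finitely many $e(x_i)$, which are made orthogonal, and the local solutions are padded with zeros so the number of generators is uniform. This glues a global solution.
\end{itemize}
For (2) the paper uses the criterion $\mathfrak{m}M\neq M$ in both directions. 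Your converse for (2) is exactly the paper's argument. For ``$M$ faithfully flat $\Rightarrow$ $M_x$ faithfully flat'', the paper lifts a maximal ideal of $R_x$ to some $\mathfrak{m}\supseteq xR$ and uses an idempotent $e\notin x$ to show $\mathfrak{m}M=M$. You instead simply invoke base change along $R\to R/xR$.

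\textbf{What each approach buys.} Your key observation is that $R_{\mathfrak{p}}$ is a localization of the single stalk $R_{x_{\mathfrak{p}}}$, because $xR$ dies in $R_{\mathfrak{p}}$. This lets you replace the compactness-and-gluing step with the standard fact that flatness is local at primes, and the argument is shorter. In effect it shows more generally that any module property that is local at primes (i.e.\ $M$ has it iff every $M_{\mathfrak{p}}$ does) and is stable under base change is automatically a module Pierce property. The paper's argument stays inside the Pierce-sheaf formalism instead. It displays the clopen-neighbourhood and compactness technique the paper relies on elsewhere, for properties such as being an f-ring (Theorem~\ref{fP'}) that are not detected by localizing at primes.
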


\begin{proof} (1)~Recall that $M_x= M/xM$. The criterion (e.g., \cite[Lem\-ma~19.19]{AF}) is used: If $\sum_{j=1}^na_jr_j=0$, $a_j\in M, r_j\in R$, there exist $v_1,\ldots, v_m\in M$ and $b_{ij}\in R$, $1\le i\le m$, $1\le j\le n$ with $\sum_{j=1}^nb_{ij}r_j=0$, $1\le i\le m$ and $\sum_{i=1}^m b_{ij}v_i=a_j$, $1\le j\le n$. 

Assume $M_R$ is flat. Consider a Pierce kernel $xM$, $x\in \spec \B(R)$; if $\sum_{j=1}^na_jr_j\in xM$ then there is $e \in \B(R)\setminus x$ where $e\sum_{j=1}^na_jr_j=0$ is an equation in $M$. The rest is easy. 

A typical Pierce sheaf argument is used here. If each $M_x$ is $R_x$-flat, suppose $\sum_{j=1}^na_jr_j=0$.  Then the criterion can be used with $R_x$ and $M_x$. Then, there is $e(x)\notin x$ which gives a set of equations in $M$.  However, the integer $m$, say $m(x)$, depends on $x$.  Then, compactness of $\spec \B(R)$ can be used to get a finite set of $e(x)$ which can be made orthogonal, say $\{e(x_1), \ldots, e(x_k)\}$. Put $m= \max \{m(x_1), \ldots, m(x_k)\}$. Then, all the elements $v_i$ and $b_{ij}$ can be found, using zeros, if necessary, where $m(x_i)<m$. 

(2)~It may be assumed that the module $M$ is flat. Then, the following criterion is used (e.g., \cite[Theorem~7.2(3)]{Ma}): a flat module $M$ is faithfully flat if and only if for every maximal ideal $\mf{m}$ of $R$, $\mf{m}M \ne M$. Assume that $M$ is not faithfully flat. Then for some maximal ideal $\mf{m}$, $\mf{m}M = M$. By Lemma~\ref{mp}(4), there is exactly one $x\in \spec \B(R)$ so that $\mf{m}\cap \B(R) = x$. It follows that $\mf{m}_xM_x= M_x$. 

In the other direction, if, for some $x\in \spec \B(R)$, $(M_x)_{R_x}$ is not faithfully flat, there is a maximal ideal $\mf{m}_x$ of $R_x$ so that $\mf{m}_xM_x = M_x$. Then $\mf{m}_x$ lifts to a maximal ideal $\mf{m}$ of $R$ with $xR\sbq \mf{m}$. For $a\in M$, there exists an equation  $a_x = \sum_{i=1}^k (r_ia_i)_x$, $r_i\in \mf{m}$ and $a_i\in M$ ($k$ depends on $a$). There exists $e\in \B(R)\setminus x$ such that $ae = \sum_{i=1}^k r_ia_ie\in \mf{m}M$. However, $a = ae+a(1-e)$ and $1-e\in x\sbq \mf{m}$.  Hence, $a\in \mf{m}M$.
\end{proof} 

The proposition will be applied to $R$-algebras, but first a lemma will be useful.  
 
\begin{lem} \label{intff}    Let $R\sbq S$ be rings such that $S$ is integral over $R$. If $S_R$ is flat then, $S_R$ is faithfully flat.\end{lem}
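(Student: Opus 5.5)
The plan is to use the criterion already invoked in the proof of Proposition~\ref{flP}(2), namely \cite[Theorem~7.2(3)]{Ma}: a flat $R$-module $M$ is faithfully flat if and only if $\mf{m}M\ne M$ for every maximal ideal $\mf{m}$ of $R$. Since $S_R$ is assumed flat, it suffices to show that $\mf{m}S\ne S$ for each $\mf{m}\in\maxspec R$. Integrality gives this through lying over.

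The key steps are as follows.

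(a) Note that $R\sbq S$ and $S$ is integral over $R$. Hence the lying-over theorem applies: for each prime $\mf{p}$ of $R$ there is a prime $\mf{P}$ of $S$ with $\mf{P}\cap R=\mf{p}$.

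(b) Given $\mf{m}\in\maxspec R$, choose such a $\mf{P}$ lying over $\mf{m}$. Then $\mf{m}S\sbq \mf{P}\ne S$, so $\mf{m}S\ne S$.

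(c) Apply the criterion to conclude that $S_R$ is faithfully flat.

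For completeness, step (a) can be replaced by the standard fact that $\mf{m}S\ne S$ whenever $S$ is integral over $R$ and $R\sbq S$. Suppose $1=\sum m_is_i$ with $m_i\in\mf{m}$ and $s_i\in S$. Then $1\in \mf{m}R[s_1,\dots,s_k]$, and $R[s_1,\dots,s_k]$ is a finitely generated $R$-module containing $1$. The determinant trick (Nakayama) then gives $r\in 1+\mf{m}$ with $r\cdot R[s_1,\ldots,s_k]=\bz$. Since $1$ lies in this module, $r=0$, which forces $1\in\mf{m}$, a contradiction.

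There is no real obstacle here. The only point needing care is that the inclusion $R\sbq S$ is injective, which is exactly what lying over requires. This is part of the hypothesis, so the argument is routine.
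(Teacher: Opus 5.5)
Your proof is correct and matches the paper's argument: lying over gives a prime of $S$ over each maximal ideal $\mf{m}$ of $R$, so $\mf{m}S\ne S$, and the maximal-ideal criterion for faithful flatness of a flat module then finishes the proof. The paper additionally observes that this prime is maximal by going-up, which, as your version shows, is not needed.
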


\begin{proof} This follows readily since $S$ has going-up with respect to $R$.  More precisely, if $\mf{m}$ is a maximal ideal of $R$, then there is $\mf{p}\in \spec S$ such that $\mf{p}\cap R= \mf{m}$. However, then $\mf{m}S \sbq \mf{p}S = \mf{p}$; by going-up, $\mf{p}$ is maximal; then the criterion of \cite[Proposition~7.1]{Ma} is used again.  
\end{proof} 

The following applies to locally Specker $R$-algebras. 

\begin{cor} \label{lSff}    Suppose that $A\in \igr{R}$.  If $A_{\phi(R)}$ is flat then, $A_{\phi(R)}$ is faithfully flat.  In particular (\cite[Proposition~7.1(1 and 2)]{BMO}), if $A$ is a locally Specker $R$-algebra, then $A_{\phi(R)}$ is faithfully flat. \end{cor}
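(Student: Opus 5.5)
The first claim reduces to Lemma~\ref{intff} once we know that $A$ is integral over $\phi(R)$. The second claim then follows by citing the flatness of locally Specker algebras from \cite{BMO}.

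\emph{Integrality.} Since $A\in \igr{R}$, $A=\phi(R)[E]$ for some set $E$ of idempotents of $A$. Every idempotent $e$ satisfies the monic polynomial $t^2-t$ over $\phi(R)$, so it is integral over $\phi(R)$. The integral closure of $\phi(R)$ in $A$ is a subring containing $\phi(R)$ and $E$, hence it is all of $A$. More concretely, by Lemma~\ref{folk} every element of $A$ can be written as $\sum_j r_j f_j$ with orthogonal $f_j\in \B(A)$ and $r_j\in\phi(R)$. Such an element lies in the finitely generated $\phi(R)$-submodule $\sum_j \phi(R)f_j$, which is a subring (the $f_j$ are orthogonal, so products of them are $0$ or $f_j$, and $1-\sum_j f_j$ can be adjoined to the list). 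Integrality then follows from the standard determinant trick. Thus $\phi(R)\sbq A$ is an integral extension.

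\emph{Faithful flatness.} Apply Lemma~\ref{intff} to the pair $\phi(R)\sbq A$: $A$ is integral over $\phi(R)$ and $A_{\phi(R)}$ is flat by hypothesis, so $A_{\phi(R)}$ is faithfully flat. The only point to check is that Lemma~\ref{intff} uses nothing beyond integrality, namely lying over and going-up for maximal ideals, and this holds for any integral extension. So there is no real obstacle here.

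\emph{The locally Specker case.} For a locally Specker $R$-algebra $A$, \cite[Proposition~7.1(1 and 2)]{BMO} gives that $A_{\phi(R)}$ is flat; indeed such an $A$ is free over $\phi(R)$. Combined with the first part, this yields faithful flatness. Alternatively, one could argue through Proposition~\ref{flP}: each Pierce stalk of $A$ is a copy of a Pierce stalk of $\phi(R)$, and this local information can be assembled using the module Pierce property. Citing \cite{BMO} is shorter, though. The step needing the most care is pinning down the correct form of the flatness statement in \cite{BMO}, so that it is applied over $\phi(R)$ rather than over $R$.
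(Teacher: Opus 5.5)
Your first claim is handled exactly as in the paper: the paper just cites Lemma~\ref{intff}, taking the integrality of $A$ over $\phi(R)$ for granted, and you verify that integrality explicitly.

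For the locally Specker case your route is different. You obtain flatness of $A_{\phi(R)}$ from \cite[Proposition~7.1]{BMO} and feed it into the first claim. The paper does not use the first claim there; it argues stalkwise. By \cite[Theorem~3.3(2)]{BMO}, for each $x\in\spec\B(\phi(R))$ the module $A/xA$ is a nonzero Specker $\phi(R)_x$-algebra, hence free, and Proposition~\ref{flP}(2) assembles this into faithful flatness. Over $R$ itself some of these stalks are $\bz$, which is why only flatness holds over $R$.

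Your route is shorter and fits the ``in particular'' phrasing of the statement. It is sound provided you use only the flatness part of \cite[Proposition~7.1]{BMO}, since the paper notes that part~(2) there is misstated. You should also note that flatness over $R$ descends to $\phi(R)=R/\ker\phi$ by base change, because $(\ker\phi)A=\bz$. The paper's route is self-contained modulo \cite[Theorem~3.3]{BMO}, and in effect it proves the corrected form of \cite[Proposition~7.1(2)]{BMO}.

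Delete the aside that a locally Specker algebra is free over $\phi(R)$; it is only Pierce-locally free. For example, take $R=\prod_{\N}\mathbb{F}_2$, a non-principal maximal ideal $\mf{m}$, and $A=R\times R/\mf{m}$. Then $A$ is generated by $\phi(R)$ and $(0,\bar 1)$, so $A\in\igr{R}$, and $\phi$ is injective. Also $A$ is locally Specker by Corollary~\ref{clean}(2). Yet $A_R\cong R\oplus R/\mf{m}$ is not projective. Your argument only needs flatness, so nothing breaks.

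Similarly, the local input for your alternative via Proposition~\ref{flP} is the structure of $A/xA$ as a $\phi(R)_x$-module for $x\in\spec\B(\phi(R))$. It is not that the Pierce stalks of the ring $A$ are copies of those of $\phi(R)$, since those are indexed by $\spec\B(A)$ rather than $\spec\B(\phi(R))$.
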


\begin{proof} The first part is by Lemma~\ref{intff}.

By \cite[Theorem~3.3(2)]{BMO}, for each $x\in \spec \B(R)$, $A_x$ is a Specker $R_x$-algebra and, hence, $R_x$-free.  Then, by Proposition~\ref{flP}(2), $A_{\phi(R)}$ is faithfully flat (keeping in mind that a flat module $M_R$ such that $\bz \ne \ann_R M$ is an idempotent ideal cannot be faithfully flat module, by \cite[Theorem~7.2(2)]{Ma}). 
 \end{proof} 

Note  that \cite[Proposition~7.1(2)]{BMO} should be stated: \emph{if $A$ is a locally Specker $R$-algebra, $A_R$ is flat and $A_{\phi(R)}$ is faithfully flat.}  This is because $\ann_RA$ is generated by idempotents. 

While on the subject of locally Specker algebras, the following should be noted.  If $A$ is a locally Specker $R$-algebra, the Pierce stalks of $A$ are from among those of $R$, in fact of $\phi(R)$ (\cite[Theorem~3.3]{BMO}); hence, the following.

\begin{prop} \label{lSp}  Let $\mc{P}$ be a ring Pierce property. If $A$ is a locally Specker $R$-algebra, then $A$ has property $\mc{P}$ if and only if $\phi(R)$ has property $\mc{P}$.  \end{prop}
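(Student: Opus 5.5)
The plan is to compare the Pierce stalks of $A$ with those of $\phi(R)$ and then apply Definition~\ref{PP} on each side. Since $\mc{P}$ is a ring Pierce property, $A$ has $\mc{P}$ if and only if every stalk $A_z$, $z\in\spec\B(A)$, has $\mc{P}$. Likewise, $\phi(R)$ has $\mc{P}$ if and only if every stalk $\phi(R)_y$, $y\in\spec\B(\phi(R))$, has $\mc{P}$. So it is enough to show that the two families of stalks agree up to isomorphism: every $A_z$ is isomorphic to some $\phi(R)_y$, and every $\phi(R)_y$ is isomorphic to some $A_z$.

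Both facts come from the structure of locally Specker algebras recalled after Definition~\ref{LSp}, namely \cite[Theorem~3.3(2) and (3)]{BMO}. First, each Pierce stalk of $A$ is isomorphic to a Pierce stalk of $\phi(R)$. This is a sharpening of Lemma~\ref{addi}: for a locally Specker algebra the homomorphism $\phi(R)_y\to A_z$, with $y=z\cap\B(\phi(R))$, is an isomorphism. Second, every Pierce stalk of $\phi(R)$ occurs as a stalk of $A$. For this I need every $y\in\spec\B(\phi(R))$ to lie under some $z\in\spec\B(A)$. The reason is that $\phi(R)\to A$ is injective and $A$ is integral over $\phi(R)$, so lying over holds for primes. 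Alternatively, $A_{\phi(R)}$ is faithfully flat by Corollary~\ref{lSff}, which gives the same conclusion.

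With both facts in hand the argument is short. If $A$ has $\mc{P}$, then every $A_z$ has $\mc{P}$. Each $\phi(R)_y$ is isomorphic to some $A_z$, and $\mc{P}$ is a ring property and so invariant under isomorphism; hence every $\phi(R)_y$ has $\mc{P}$, and $\phi(R)$ has $\mc{P}$. Conversely, if $\phi(R)$ has $\mc{P}$, then each $A_z\cong\phi(R)_y$ has $\mc{P}$, so $A$ has $\mc{P}$.

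The only step that needs care is surjectivity of the map $\spec\B(A)\to\spec\B(\phi(R))$, that is, that every stalk of $\phi(R)$ is actually realized as a stalk of $A$. The remark after Definition~\ref{LSp} asserts this from \cite{BMO}, and I would cite it there, with faithful flatness (Corollary~\ref{lSff}) as a backup argument. It also has to be kept in mind that stalks are compared with those of $\phi(R)$ rather than $R$. This is forced: the stalks of $R$ lying over idempotents in $\ker\gamma$ disappear, so the statement would be false with $R$ in place of $\phi(R)$.
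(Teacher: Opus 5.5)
Your proposal is correct and is essentially the paper's argument: the paper justifies the proposition only by the remark after Definition~\ref{LSp} that, by \cite[Theorem~3.3]{BMO}, the Pierce stalks of $A$ and of $\phi(R)$ coincide up to isomorphism, and then applies the definition of a ring Pierce property on both sides, exactly as you do. Your lying-over argument for the surjectivity of $\spec\B(A)\to\spec\B(\phi(R))$ is a valid but optional supplement to that citation.
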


An example of this is contained in the following. Part (iii) of \ref{known} generalizes \cite[Theorem~4.3(1)]{BMMO} to awB rings (every wB ring is awB but not conversely e.g., \cite[Example~5.4]{Be}), and, moreover, to locally Specker algebras. 

\begin{cor} \label{awBSp}    A locally Specker $R$-algebra $A$ is awB if and only if $\phi(R)$ is awB. \end{cor}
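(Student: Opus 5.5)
The plan is to obtain Corollary~\ref{awBSp} as the special case of Proposition~\ref{lSp} in which $\mc{P}$ is ``awB ring''. The only input needed beyond those two results is Proposition~\ref{known}(iii), which says that being awB is a ring Pierce property. Concretely, by \cite[Theorem~2.2]{NR} a ring is awB exactly when each of its Pierce stalks is a domain.

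First, I would record the Pierce structure of a locally Specker $R$-algebra $A$ from \cite[Theorem~3.3(2),(3)]{BMO}, as recalled after Definition~\ref{LSp}. Every Pierce stalk of $A$ is isomorphic to some Pierce stalk of $\phi(R)$. Conversely, every Pierce stalk of $\phi(R)$ occurs, up to isomorphism, as a Pierce stalk of $A$. So the collection of isomorphism types of Pierce stalks is the same for $A$ and for $\phi(R)$.

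Next, I apply Proposition~\ref{known}(iii) to both rings. Then $A$ is awB iff every $A_z$ is a domain, iff every $\phi(R)_y$ is a domain (by the previous step), iff $\phi(R)$ is awB. Being a domain is an isomorphism invariant, so the identification of stalks only up to isomorphism causes no problem.

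The one point needing care is the ``conversely'' direction: every stalk of $\phi(R)$ must actually appear among those of $A$, and not merely be a homomorphic source of one as in Lemma~\ref{addi}. This is precisely what \cite[Theorem~3.3]{BMO} provides for locally Specker algebras. It is also why the statement is phrased with $\phi(R)$ rather than $R$: the stalks of $R$ at points of $\spec\B(R)$ containing $\ker\gamma$-related idempotents are killed, since $\ker\phi=(\ker\gamma)R$. Apart from this, the argument is a direct citation of the earlier results.
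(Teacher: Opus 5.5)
Your proposal is correct and follows essentially the same argument as the paper, which also combines Proposition~\ref{known}(iii) (awB if and only if every Pierce stalk is a domain) with the two-way correspondence of Pierce stalks between $A$ and $\phi(R)$ behind Proposition~\ref{lSp}. One small point in your closing side remark, which does not affect the proof: the stalks of $R$ that vanish in $\phi(R)$ are those at points $x$ with $\ker\gamma\not\subseteq x$, where some $e\in\ker\gamma$ has $e_x=1$, not those at points that contain elements of $\ker\gamma$.
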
 

\begin{proof}  If all the Pierce stalks of $A$ are domains, so are those of $\phi(R)$; and conversely if all the Pierce stalks of $\phi(R)$ are domains, so are those of $A$.  
\end{proof}

More ring Pierce properties are now examined. The following sorts of rings have intrinsic interest and some are studied further in later sections. 

The question of whether or not $\minspec R$ (with the Zariski topology) is compact is a frequent topic, especially in  connection with the regularity of $\qcl (R)$.  

\begin{prop} \label{comp}  For a ring $R$, if $\qcl(R)$ is regular then $\minspec R$ is compact. \end{prop}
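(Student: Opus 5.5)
The plan is to transfer the problem to $Q=\qcl(R)$ and use that minimal primes of $R$ and of $Q$ correspond. Let $S$ be the set of non-zero-divisors of $R$, so $Q=RS\inv$. Every minimal prime $\mf{p}$ of $R$ consists of zero-divisors, so $\mf{p}\cap S=\vide$. Hence contraction gives a bijection between the primes of $Q$ and the primes of $R$ missing $S$. Minimal primes of $R$ are among these, and minimality is preserved in both directions, so $\minspec R$ and $\minspec Q$ correspond bijectively.

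Next I check that this bijection is a homeomorphism. Contraction $\spec Q\to\spec R$ is continuous, so its restriction $\minspec Q\to\minspec R$ is a continuous bijection. For the inverse, a basic open set of $\minspec Q$ has the form $\{\mf{P}\mid r/s\notin\mf{P}\}$ with $r\in R$, $s\in S$. Since $s$ is a unit in $Q$, this equals $\{\mf{P}\mid r\notin\mf{P}\}$, which is the preimage of the basic open set $\{\mf{p}\in\minspec R\mid r\notin\mf{p}\}$. So basic open sets correspond and the map is a homeomorphism.

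Finally, if $Q$ is regular, every prime of $Q$ is maximal (by the characterization recalled in the introduction), so $\minspec Q=\spec Q$. The prime spectrum of any ring is compact, so $\minspec Q$ is compact. Transporting through the homeomorphism, $\minspec R$ is compact.

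As an alternative avoiding the homeomorphism, I could use the Pierce picture. In a regular ring the primes are exactly the Pierce kernels $xQ$, so $\spec Q\cong\spec\B(Q)$, which is compact. One would still need the identification above, so I would go with the first route.

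The main obstacle is only the bookkeeping in the homeomorphism step: the fact that minimal primes avoid $S$, and the fact that the denominators in basic open sets can be discarded because they are units. Both are routine; the fact that minimal primes consist of zero-divisors is standard (for instance, \cite{Ma}).
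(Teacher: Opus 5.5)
Your proof is correct, and it takes a genuinely different, more elementary route than the paper. The paper first notes that $R$ is semiprime and that $\qcl(R)$ is always $R$-flat. It then uses \cite[Theorem~2.2]{C} (if one regular ring between $R$ and $Q(R)$ is $R$-flat, then all are) to get $Q(R)$ flat over $R$. Finally it quotes Matlis \cite[Theorem~3.1]{M} to conclude that $\minspec R$ is compact.

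You instead identify $\minspec R$ with $\minspec\qcl(R)=\spec\qcl(R)$. This uses only two facts: minimal primes consist of zero-divisors, and primes correspond under localization. The argument is self-contained, needs no semiprimeness, and even gives a homeomorphism.

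The step you call the main obstacle can in fact be skipped. The primes of $R$ missing $S$ are closed under passing to smaller primes and contain every minimal prime. So contraction maps $\spec\qcl(R)$, all of whose primes are minimal, onto $\minspec R$. A continuous image of a compact space is compact; the paper uses the same device in Theorem~\ref{Rpp1}.

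What the paper's route buys is a link to the flatness criteria of Carson and Matlis. These apply to any $R$-flat regular ring of quotients, not only to $\qcl(R)$. That generality is used later, e.g.\ in Proposition~\ref{aps1}, where the flat regular ring available is $\qcl(A)$ for an idempotent extension $A$ of $\cx$, not the classical ring of quotients of $\cx$ itself.
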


\begin{proof} Here $R$ is semiprime.  The result \cite[Theorem~2.2]{C} says that if any regular $V$ between $R$ and $Q(R)$ is $R$-flat then any such regular ring is $R$-flat.  This applies when $\qcl(R)$, which is always $R$-flat, is regular. Hence, the regular ring $Q(R)$ is $R$-flat.   Then by \cite[Theorem~3.1]{M}, $\minspec R$ is compact.  
\end{proof} 

The converse of Proposition~\ref{comp} is ``almost'' true (to quote \cite{HJ}), but the following is needed: the \emph{annihilator condition} (a.c.).  A semi\-prime ring $R$ is said to satisfy the \emph{annihilator condition} (a.c.) if for $r,s\in R$ there is $t\in R$ with $\ann_R r\cap \ann_R s= \ann_R t$ (\cite[Definition~3.2]{HJ}). 

The following could be said to be implicit in \cite{HJ}, but no proof is given there.    

\begin{prop} \label{qclreg}   For a ring $R$, $\qcl(R)$ is regular if and only if $\minspec R$ is compact and $R$ satisfies the a.c. \end{prop}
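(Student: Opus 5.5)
Both conditions force $R$ to be semiprime. If $\qcl(R)$ is regular it is semiprime, hence so is $R$. The a.c.\ is defined only for semiprime rings. So I assume $R$ semiprime throughout.

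\emph{Forward direction.} Compactness of $\minspec R$ is Proposition~\ref{comp}. For the a.c., take $r,s\in R$. Since $\qcl(R)$ is regular, $rQ=eQ$ and $sQ=fQ$ for idempotents $e,f\in\B(\qcl(R))$, where $Q=\qcl(R)$. Then $\ann_Q r=(1-e)Q$ and $\ann_Q s=(1-f)Q$, so
\[
\ann_Q r\cap\ann_Q s=(1-e)(1-f)Q=\ann_Q(e\vee f).
\]
The element $e\vee f$ generates the same ideal of $Q$ as $r^2+s^2$ only in special cases, so instead I would take $g=e\vee f$ and write $g=t/d$ with $t\in R$ and $d$ a non-zero-divisor. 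Then $\ann_Q t=\ann_Q g$. Intersecting with $R$ and using $\ann_R x=\ann_Q x\cap R$ gives $\ann_R r\cap\ann_R s=\ann_R t$, which is the a.c.

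\emph{Reverse direction.} This is the main obstacle. I would use the standard criterion that, for semiprime $R$, $\qcl(R)$ is regular if and only if for every $r\in R$ there is a non-zero-divisor of the form $r+s$ with $rs=0$. Equivalently, there is $s$ with $\ann_R r\cap\ann_R s=\bz$ and $s\in\ann_R r$; then $rs=0$ and $r+s$ is regular, so $r/(r+s)$ is an idempotent in $\qcl(R)$ generating $r\qcl(R)$.

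To produce $s$, I would use the minimal primes. For $r\in R$, the set $V_r=\{\mf p\in\minspec R\mid r\in\mf p\}$ is clopen in $\minspec R$; its complement is open, and $r\in\mf p$ iff $\ann_R r\not\sbq\mf p$ for minimal $\mf p$. Thus $V_r=\bigcup_{a\in\ann r}(\minspec R\setminus V_a)$.

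Since $\minspec R$ is compact, $V_r$ is compact, being closed. So finitely many $a_1,\dots,a_n\in\ann_R r$ have $V_r=\bigcup_i(\minspec R\setminus V_{a_i})$. Iterating the a.c.\ gives $t$ with $\ann_R t=\bigcap_i\ann_R a_i$. Since $\mf p$ minimal contains $a$ iff $\ann_R a\not\sbq\mf p$, one checks $V_t=\bigcap_i V_{a_i}$.

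I would then need $t\in\ann_R r$. For this I would replace $t$ by $s=tu$-type adjustments, or more simply use a.c.\ on $\ann$'s to note that $\ann_R t=\ann_R(a_1^2+\cdots+a_n^2)$ already holds in a semiprime ring. Indeed, both sides equal the intersection of the minimal primes avoiding all $a_i$. So in fact I take $s=\sum a_i^2\in\ann_R r$, and the a.c.\ is needed only to pass between annihilators and complements of $V$'s in general.

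Then $V_r\cap V_s=\vide$. Indeed, a minimal prime containing $s$ contains every $a_i$, so it lies outside $\bigcup_i(\minspec R\setminus V_{a_i})=V_r$. Hence $r+s$ lies in no minimal prime, so it is a non-zero-divisor (semiprime), and $rs=0$. This completes the plan.

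Where I expect difficulty is in confirming that compactness alone suffices without the a.c. The example in \cite{HJ} suggests it does not, so the a.c.\ must enter exactly at this finite-cover-to-single-element step. I would check carefully whether sums of squares suffice or whether the a.c.\ element $t$ must genuinely replace them.
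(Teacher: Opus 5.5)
\textbf{The gap: replacing $t$ by a sum of squares.} Your forward direction is correct. The reverse direction breaks at the step where you replace the a.c.\ element $t$ by $s=a_1^2+\cdots+a_n^2$. You claim that $\ann_R t=\ann_R(a_1^2+\cdots+a_n^2)$ in every semiprime ring. That claim is false. So is the consequence you then use, that a minimal prime containing $\sum a_i^2$ contains every $a_i$. Your own remark about $r^2+s^2$ in the forward direction already points to this. Sums of squares behave this way in rings like $\cx$, but not in general.

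Concretely, take $R=\mathbb{C}\times\mathbb{C}$ and $r=(1,0)$. Then $V_r=\{\mathbb{C}\times 0\}$. The elements $a_1=(0,1)$ and $a_2=(0,i)$ lie in $\ann_R r$, and each alone covers $V_r$, so $\{a_1,a_2\}$ is a legitimate finite subcover. But $a_1^2+a_2^2=0$, so your $s$ is $0$ and $r+s=r$ is a zero-divisor. Likewise $\ann_R a_1\cap\ann_R a_2=\mathbb{C}\times 0$, whereas $\ann_R(a_1^2+a_2^2)=R$.

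\textbf{The repair.} It is immediate, and it is exactly where the a.c.\ enters: you never needed to adjust $t$. Each $a_i$ lies in $\ann_R r$, so $r\in\bigcap_i\ann_R a_i=\ann_R t$, that is, $rt=0$. Take $s=t$.
\begin{itemize}
\item You already checked that $V_t=\bigcap_i V_{a_i}$. This set is disjoint from $V_r=\bigcup_i(\minspec R\setminus V_{a_i})$.
\item Since $rt=0$, every minimal prime contains at least one of $r,t$; by the disjointness it contains exactly one.
\item Hence $r+t$ lies in no minimal prime, so it is a non-zero-divisor in the semiprime ring $R$. Then $r/(r+t)$ is an idempotent generating $r\,\qcl(R)$, and $\qcl(R)$ is regular.
\end{itemize}
(The degenerate case $V_r=\vide$ is covered by taking $t=0$.)

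\textbf{Comparison with the paper.} With this change your proof is complete, and it takes a different route from the paper's. The paper quotes \cite[Theorem~3.4]{HJ} to replace ``$\minspec R$ compact and the a.c.'' by a single condition: every $r$ has an $r'$ with $\ann_R(\ann_R r')=\ann_R r$. It then only verifies that this condition is equivalent to regularity of $\qcl(R)$. In one direction it takes $r'=b-a$ for an idempotent $a/b$; in the other it shows $r+r'$ is a non-zero-divisor. Your argument proves the needed parts of the Henriksen--Jerison theorem directly:
\begin{itemize}
\item the a.c.\ comes from $e\vee f$;
\item compactness comes from Proposition~\ref{comp};
\item the complementary element comes from a finite subcover of the clopen set $V_r$, merged into one element by the a.c.
\end{itemize}
This makes your proof more self-contained than the paper's, at the cost of length.
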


\begin{proof} It can be assumed that $R$ is semiprime since the definition of the a.c.\ requires it.  In \cite[Theorem~3.4]{HJ} ((c) $\Leftrightarrow$ (a)), the two conditions in the statement are shown to be equivalent to: for every $r\in R$, there is $r' \in R$ such that $\ann_R(\ann_R r') = \ann_R r$.  

Assume first that $\qcl(R)$ is regular. Let $r\in R$. There is an idempotent $e = a/b\in \B(\qcl(R))$ which generates $r\qcl(R)$. Put $r' = b-a$, and $\ann_{\qcl(R)} r = (1-e)\qcl(R)$.  Moreover, $\ann_{\qcl(R)} (b-a) = e\qcl(R)$. Hence, $\ann_{\qcl(R)}(\ann_{\qcl(R)} r') =\ann_{\qcl(R)}r$. Then, taking intersections with $R$, the result follows.

In the other direction, take $r, r'\in R$ such that $\ann_R(\ann_R r')$ $ = \ann_R r$. It follows that $rr' = 0$.  Then, $r+r'$ is a non-zero divisor in $R$.  Indeed, let $t\in R$ be such that $t(r+r') = 0$; since $R$ is semiprime, $tr = tr' = 0$. This means that $t\in \ann_Rr'$ and $t\in \ann_Rr = \ann_R(\ann_Rr')$. Hence, $t^2=0$ and $t=0$. 

Then, there is $q\in \qcl(R)$ with $(r+r')q=1$. It follows that $r = r(r+r')q = r^2q$. In addition a typical element of $\qcl(R)$ is of the form $r/d$, $d\in R$ a non-zero divisor.  Hence, $r/d = (r^2/d^2) qd$, showing that $\qcl(R)$ is regular. 
\end{proof}

\begin{rem} \label{Pcom}    If $R$ has $\minspec R$ compact, then so do all its Pierce stalks.  The converse is false.  \end{rem}

\begin{proof} Always, $\minspec R$ is Hausdorff (\cite[Corollary~2.4]{HJ}). If $\minspec R$ is compact then  if $x\in \spec \B(R)$, by Lem\-ma~\ref{mp}(1), the closed Zariski set $\{\mf{p}\in \minspec R\mid \mf{p}\spq xR\}$ is homeomorphic to  $\minspec R_x$. This is a closed set in a compact Hausdorff space and, hence, is compact.

On the other hand, the example \cite[Example~3.2]{NR}, $R= \C(\beta\N\setminus \N)$ ($\beta$ is the \v{C}ech-Stone compactification) is an awB ring making each $\minspec R_x$ compact (one element). However, this ring satisfies the a.c.\ and if $\minspec R$ were compact, $\qcl(R)$ would be regular.  However, $\qcl (R) =R$ and it is not regular. 
\end{proof}

Remark~\ref{Pcom} has a parallel result for the a.c.  

\begin{thm} \label{acP}  Let $R$ be a semiprime ring.  Then, (1)~if $R$ satisfies the a.c.\ so do all of its Pierce stalks, (2)~the converse of (1) is false, even for awB rings.  \end{thm}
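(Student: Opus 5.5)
For part (1) I will argue directly with the Pierce sheaf technique: an equation that holds in a stalk $R_x$ lifts to an equation in $R$ after multiplying by some $e\in \B(R)\setminus x$. By Proposition~\ref{known}(v), each $R_x$ is semiprime, so the a.c.\ makes sense for the stalks. Fix $x\in\spec \B(R)$ and $r_x,s_x\in R_x$, and choose lifts $r,s\in R$. By the a.c.\ in $R$ there is $t\in R$ with $\ann_R r\cap\ann_R s=\ann_R t$. I claim that $t_x$ works in $R_x$; there are two inclusions to check.

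\emph{First inclusion.} Suppose $u_xr_x=0$ and $u_xs_x=0$. Then there is a single $e\in\B(R)\setminus x$ with $eur=0$ and $eus=0$ (take the product of the two idempotents). So $eu\in\ann_R r\cap \ann_R s=\ann_R t$, and hence $u_xt_x=(eut)_x=0$, since $e_x=1$.

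\emph{Second inclusion.} If $u_xt_x=0$, then $eut=0$ for some $e\notin x$. Thus $eu\in\ann_R t\sbq \ann_R r\cap\ann_R s$, which gives $u_xr_x=u_xs_x=0$.

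Hence $\ann_{R_x}r_x\cap\ann_{R_x}s_x=\ann_{R_x}t_x$, which proves (1). This part is routine.

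For part (2) the key observation is that for an awB ring every Pierce stalk is a domain (\cite[Theorem~2.2]{NR}). In a domain every annihilator of an element is either $\bz$ or the whole ring, so the a.c.\ holds trivially: take $t=rs$. Hence any semiprime awB ring failing the a.c.\ is a counterexample, and the whole work is constructing one. Such a ring automatically has $\qcl(R)$ non-regular by Proposition~\ref{qclreg}, but that only restricts the search. The plan is a ring of sequences built from Constructions~\ref{con1} and \ref{con2}, with all Pierce stalks domains, in which two elements $r,s$ have ``supports'' whose union is not the support of any single element.

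The constructions in the form given do not suffice. If all $R_\alpha$ and $R$ are domains, then in the ring $T$ of Construction~\ref{con2}(2) annihilators are computed coordinatewise. Given $r,s$, the element $t$ that is $1$ exactly on the coordinates where $r_\alpha$ or $s_\alpha$ is nonzero (and has a constant tail) lies in $T$ and witnesses the a.c. So the counterexample must come from a subring where this ``indicator'' element is unavailable. I would first try a subring $S$ of $\prod_{n\in\N} D$, with $D=k[x,y]$, consisting of sequences that are eventually constant modulo a fixed ideal, chosen so that $\B(S)$ is only the finite/cofinite sets. Within $S$, I would pick $r$ and $s$ supported on two disjoint infinite sets, so that any candidate $t$ would need support equal to their union, which is neither finite nor cofinite.

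The main obstacle is to arrange this while keeping the stalk at the point at infinity a domain, which is what preserves awB. I would check the stalks with the description in Construction~\ref{con2}, and show failure of the a.c.\ by verifying that no $t\in S$ has annihilator equal to the sequences vanishing on the union of the two supports. If this construction breaks down, I would fall back on a known semiprime awB example in the literature lacking the a.c.\ (e.g.\ in \cite{NR} or \cite{HJ}).
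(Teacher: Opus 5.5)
Your part (1) is correct and is essentially the paper's argument. The paper notes that $R_x$ is a classical localization, so annihilators of elements pass to $R_x$; your lifting of $u_xr_x=0$ to $eur=0$ with $e\in\B(R)\setminus x$ is that same fact done by hand. There is a small slip in part (2): in a domain, $t=rs$ does not witness the a.c.\ when exactly one of $r,s$ is $0$; take $t$ to be whichever of $r,s$ is nonzero.

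The genuine gap is part (2), which is the real content of the theorem. You construct no counterexample, and the mechanism you propose cannot work.
\begin{itemize}
\item Elements with disjoint supports satisfy $rs=0$. In \emph{any} semiprime ring, $rs=0$ forces $\ann(r+s)=\ann r\cap\ann s$: if $u(r+s)=0$ then $ur^2=0$, so $(ur)^2=0$ and $ur=0$, and likewise $us=0$. This is the computation behind $(*)$ in the proof of Theorem~\ref{epp}. The witness $t$ need not be an idempotent, and $r+s$ already has support equal to the union of the supports.
\item Independently, suppose the idempotents are only the finite/cofinite subsets of $\N$. Then two elements with disjoint infinite supports have nonzero images in the stalk at $\infty$ whose product is $0$. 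That stalk is not a domain, so awB is lost.
\item Your fallback is not safe either: the awB example $\C(\beta\N\setminus\N)$ of \cite{NR} satisfies the a.c.
\end{itemize}

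What is actually needed is a pair of \emph{non-orthogonal} elements whose ``union of supports'' is not the support of any single element, while every stalk stays a domain. The paper uses Lucas's ring \cite[Proposition~2.3]{Lu}:
\begin{itemize}
\item Take $K$ algebraically closed, $D=K[X,Y]$, and $I=\mathcal{M}\times\N$, where $\mathcal{M}$ is the set of maximal ideals of $D$.
\item Set $R=\phi(D)+\bigoplus_{i\in I}D/M_\alpha\sbq\prod_{i\in I}D/M_\alpha$, where $\phi$ is the diagonal map.
\item By \cite[Proposition~2]{BS}, the Pierce space is $I\cup\{\infty\}$. The stalks are the fields $D/M_\alpha$ together with $R_\infty\cong D$.
\item The one real check is that $D\to(\prod_{i\in I}D/M_\alpha)/(\bigoplus_{i\in I}D/M_\alpha)$ is injective. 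That is, a polynomial vanishing at all but finitely many points of $I$ must be $0$; this is where the factor $\N$ and the infinitude of $K$ are used.
\end{itemize}
So $R$ is awB and every stalk has the a.c.

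The a.c.\ fails for $r=\phi(X)$ and $s=\phi(Y)$. The atoms over the origin lie in $\ann r\cap\ann s$, and no other atom does. A witness $t=\phi(d)+b$, with $b$ finitely supported, would therefore require $d$ to vanish at the origin of $K^2$ and nowhere else. That is impossible for a nonconstant polynomial in two variables over an algebraically closed field.
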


\begin{proof} (1)~It must be shown that in a Pierce stalk $R_x$, for $r_x, s_x\in R_x$, there exists $t_x\in R_x$ such that $\ann_{R_x}t_x = \ann_{R_x}r_x\cap \ann_{R_x}s_x$. Since $R_x$ is a classical localization (Lemma~\ref{mp}(2)), annihilators of elements commute with the localization (e.g., \cite[Proposition~3.14]{AtM}); then lift $r_x$ and $s_x$ to $r, s\in R$. Then there is $t\in R$ with $\ann_Rr\cap \ann_Rs = \ann_Rt$.  From this $\ann_{R_x}r_x\cap \ann_{R_x}s_x = \ann_{R_x}t_x$.  Thus, $R_x$ satisfies the a.c.

(2) To show this, a digression is necessary.  In \cite[Example~3.3]{HJ} there is an example, due to H.\ Flanders, of a semiprime ring that does not have the a.c.  It can be used to show (2), but another example will be used here.  In \cite[Proposition~2.3]{Lu} a different construction is used to find examples.  It works as follows: Let $K$ be an algebraically closed field and $D = K[X,Y]$. Put $\mc{M}$ to be the set of maximal ideals of $D$, and $I = |\mc{M}| \times \N$,  where $i\in I$ has the form $(\alpha, n), n\in \N$ and $M_\alpha\in \mc{M}$. The ring $R$ will be a subring of $\Pi= \prod_{i\in I} D/M_\alpha$, when $i=(\alpha, n)$. Put $B= \bigoplus_{i\in I}D/M_\alpha$. The subring $A\subset \Pi$ is formed from $\phi\colon D\to \Pi$ where $\phi(d)$ is the element whose $i$-component, $i=(\alpha, n)$ is $d+M_\alpha$. Finally $R= A+B$.  It is shown in \cite[Proposition~2.3]{Lu} that $R$ does not have a.c. 

It will be shown here that the base space of the Pierce sheaf of $R$ is the one-point compactification, $I\cup \{\infty\}$, of the discrete space $I$, with Pierce stalks all fields except for $R_\infty \cong D$. In other words all the Pierce stalks have the a.c.   In $\B(R)$, for each $i\in I$ there is an atom $e_i$ which is 0 except in the $i$-component where it is 1. If $i=(\alpha,n)$,  $x_i=(1-e_i)\B(R)\in \spec \B(R)$ and $R_{x_i}=D/M_\alpha$. 

In \cite[Proposition~2]{BS}, it is shown exactly when a ring has a Pierce sheaf whose base space is of the form $I\cup \{\infty\}$, the one-point compactification of the discrete space $I$ (already seen in simplified form in Construction~\ref{con2}(2)). What is needed is to have a ring homomorphism $\psi \colon D\to (\prod_{i\in I} D/M_\alpha)/ (\bigoplus_{i\in I}D/M_\alpha)$, along with a condition on the idempotents of $D$.  Such a homomorphism is already available, namely, $\phi$ followed by dividing out by the direct sum; since $D$ is a domain, the condition about idempotents is satisfied.  It remains to show that $\psi$ is a monomorphism, making $R_\infty \cong D$. 

For each pair $(a,b)\in K\times K$, $M_{a,b}= \{d\in D\mid d(a,b) =0\}$, is a maximal ideal in $D$; in the set of maximal ideals of $D$, label this one $\alpha(a,b)$. Put $L = \{(\alpha(a,b),n)$, $(a,b)\in K\times K$  and $n\in \N\}\subset I$.  

Write $d=d(X,Y) = \sum_{m,n\ge 0}a_{m,n}X^mY^n$, $a_{m,n} \in K$ and suppose that $\psi(d) =0$. Let $\N_0$ be $\N\cup\{0\}$. 

Since $d$ is zero at all but finitely many of the points of $I$,  in particular, $d$ is zero at all but finitely many points in $L$. Fix $0\ne a\in K$. Then, for infinitely many $b\in K$, $d(a,Y)$ has zero value.  Hence, $d(a,Y)$ is the zero polynomial.  Then, for each $n\in \N_0$, $\sum_{m\in \N_0}a_{m,n}a^m=0$. However, this is true for infinitely many $a$, making, for each $n\in \N_0$, $a_{m,n}=0$. In other words, $d=0$. 

All the Pierce stalks of $R$ are fields except one which is the domain $D$. All have the a.c., but $R$ does not. 
\end{proof}

The ring $R$ of Theorem~\ref{acP}(2) has other properties worth mentioning that are related to some of the topics above.  

\begin{cor} \label{qcl=R}   The ring of Theorem~\ref{acP}(2) has the property that $\qcl(R) =R$; it is not regular. Moreover, $R$ is awB but not wB. In addition, $\minspec R$ is not compact.\end{cor}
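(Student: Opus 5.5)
Write $R=A+B\subset \Pi$ with $B=\bigoplus_{i\in I}D/M_\alpha$, as in the proof of Theorem~\ref{acP}(2). Each claim will be checked directly in this setting.

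\emph{Non-zero divisors are units, so $\qcl(R)=R$.} Take $r=\phi(d)+b$ with $b\in B$. Suppose some component $r_i$ is $0$. Then $e_i r=0$ with $e_i\ne0$, so $r$ is a zero divisor. Hence a non-zero divisor $r$ has every component $r_i=d+b_i+M_\alpha$ non-zero in the field $D/M_\alpha$. The componentwise inverse $s$ exists in $\Pi$, and we must show $s\in R$. Since $b$ has finite support, $r_i=d+M_\alpha$ for all but finitely many $i$. In particular, taking $i=(\alpha,n)$ outside that finite support (there are infinitely many $n$ for each $\alpha$), $d\notin M_\alpha$ for every maximal ideal $M_\alpha$. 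As $K$ is algebraically closed, the Nullstellensatz forces $d$ to be a nonzero constant $c\in K$, which is invertible. Then $s-\phi(c^{-1})$ has finite support, so $s\in R$. Thus every non-zero divisor is a unit and $\qcl(R)=R$.

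\emph{$R$ is not regular.} It suffices to use the Pierce stalk $R_\infty\cong D$, which was established in Theorem~\ref{acP}(2). This stalk is not a field, so $R$ is not regular by Proposition~\ref{known}(ii). Alternatively, $\phi(X)$ cannot satisfy $X^2r'=X$: in the stalk $D$ this would say $X$ is invertible modulo $X^2$-type relations, i.e. $Xr'=1$ in $D$, which is impossible.

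\emph{$R$ is awB.} Every Pierce stalk is either a field or $D$, hence a domain, so $R$ is awB by Proposition~\ref{known}(iii) (\cite[Theorem~2.2]{NR}).

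\emph{$R$ is not wB.} I would take $a=\phi(X)$. Its annihilator is $\sum e_i R$ over those $i=(\alpha,n)$ with $X\in M_\alpha$. This is an infinite set, since $X\in M_{a,b}$ exactly when $a=0$, which happens for infinitely many $b$. Suppose $\ann_R a=eR$ for an idempotent $e$. The idempotents of $R$ are the elements $\phi(0)+(\text{finite support})$ or $\phi(1)-(\text{finite support})$, i.e. finite or cofinite subsets of $I$. The set $e$ would have to be exactly the support set above. That set is infinite, since it contains $(\alpha(0,b),n)$ for all $b$ and $n$, and its complement is infinite too, since it contains $(\alpha(1,b),n)$. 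So it is neither finite nor cofinite, and no such $e$ exists. This is the step needing care: one has to pin down $\B(R)$ as the finite/cofinite subsets and identify the annihilator precisely.

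\emph{$\minspec R$ is not compact.} This follows from Proposition~\ref{qclreg}, though that result needs the a.c., which fails here, so the implication does not apply in the helpful direction. I would instead argue via Proposition~\ref{comp} and \cite{M}: compactness of $\minspec R$ is equivalent to $Q(R)$ being $R$-flat, and by \cite{HJ} it is equivalent to $R$ having, for each $r$, an $r'$ with $rr'=0$ and $r+r'$ a non-zero divisor. Apply this to $r=\phi(X)$. We need $r'$ with $rr'=0$, so $r'$ is supported in the infinite, co-infinite set $S$ above. Any element of $R$ supported in $S$ has finite support, because a nonzero $\phi(d)$-part would be nonzero at cofinitely many points of each fiber. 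Hence $r+r'$ vanishes at some $(\alpha(0,b),n)$ outside that finite support, so $r+r'$ is a zero divisor. Therefore $\minspec R$ is not compact. The main obstacle is invoking the correct characterization from \cite{HJ} (their Theorem 3.4 / Henriksen–Jerison criterion for compactness of $\minspec R$); I would use that criterion in this form.
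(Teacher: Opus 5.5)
Your arguments for $\qcl(R)=R$, for non-regularity (via the stalk $R_\infty\cong D$) and for awB follow the paper. For ``not wB'' you take a different, direct route. You compute $\ann_R\phi(X)=\bigoplus_{i\in S}D/M_\alpha$ with $S=\{(\alpha,n)\mid X\in M_\alpha\}$ infinite and co-infinite, and note that every idempotent of $R$ has finite or cofinite support. The paper instead just observes that a wB ring has regular classical ring of quotients, whereas $\qcl(R)=R$ is not regular. Your version is correct and more explicit; the paper's is shorter.

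\textbf{The gap is in the last step.} The condition you use is: for every $r$ there is $r'$ with $rr'=0$ and $r+r'$ a non-zero divisor. This does not characterize compactness of $\minspec R$. The proof of Proposition~\ref{qclreg} (where $r=r^2(r+r')^{-1}$) shows it is equivalent to regularity of $\qcl(R)$. That is, it is equivalent to compactness of $\minspec R$ \emph{together with} the a.c.; this is the condition of \cite[Theorem~3.4]{HJ}. Compactness alone does not give it in general. Since $R$ fails the a.c., the condition fails for $R$ regardless of the topology of $\minspec R$. So your computation at $r=\phi(X)$ only re-proves that $\qcl(R)=R$ is not regular. This is the same trap you correctly spotted with Proposition~\ref{qclreg}.

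You can close the gap in either of two ways.
\begin{itemize}
\item \emph{Paper's route.} awB together with compact $\minspec R$ forces wB (the remarks of \cite{AM} quoted in \cite{NR}, with WDML implied by awB). You have already shown $R$ is awB but not wB.
\item \emph{Direct route.} The minimal primes are the ideals $(1-e_i)R$, $i\in I$, together with $B$ (as $R/B\cong D$). Each $(1-e_i)R$ is isolated, since $\{\mf{p}\in\minspec R\mid e_i\notin\mf{p}\}=\{(1-e_i)R\}$. The basic open set $\{\mf{p}\in\minspec R\mid \phi(X)\notin\mf{p}\}$ contains $B$ but misses $(1-e_i)R$ for every $i\in S$. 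Together with the singletons $\{(1-e_i)R\}$, $i\in S$, it forms an open cover of $\minspec R$ with no finite subcover.
\end{itemize}
Equivalently, you can use the correct criterion: for each $a$ there is a finitely generated ideal $J\sbq\ann_R a$ with $\ann_R a\cap\ann_R J=\bz$. Your observation that every element of $\ann_R\phi(X)$ has finite support shows this fails at $a=\phi(X)$.
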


\begin{proof} All the Pierce stalks of $R$ are domains, making $R$ awB.  

Consider an element of $r\in R$ that is a non-zero divisor.  There is some $d\in D$ such that, except at finitely many places, $r_i = d+ M_\alpha$, where $i=(\alpha, n)$. If, for some $\alpha$, $d\in M_\alpha$, $r$ would be zero at infinitely many $i\in I$, and, hence, a zero divisor. Thus, $d$ is not in any maximal ideal and is a unit in $D$. This makes $r_\infty = d$ a unit.   But then, $r$ can be seen to have an inverse in $R$ since $d$ does.  Hence, $\qcl(R)$ is not regular.

The next statement follows because in a wB ring, $\qcl$ is regular. Finally, awB plus $\minspec R$ compact implies $R$ is a wB ring.   This is by the remarks, due to \cite{AM}, before \cite[Proposition~1.4]{NR}.  Note that this requires a condition called WDML, but that is implied by awB (\cite[ Proposition~1.4]{NR}). 
\end{proof}

  Pure ideals are characterized in many ways (see e.g., \cite[1 \S11]{Ste} and also before Lemma~\ref{mp} above), but the basic one is that an ideal $I$ of $R$ is pure if, and only if, $(R/I)_R$ is flat. In \cite[Section~7]{BMO}, rings whose pure ideals are generated by idempotents play an important role in $\igr{R}$. Those rings have various names but, here, the terminology of \cite{V} is used.

\begin{defn}\cite[\S~3]{V} \label{fR}   A ring $R$ in which every pure ideal is generated by idempotents is called an \emph{f-ring} \end{defn}

It will be shown here that being an f-ring is a ring Pierce  property.  This will expand the class of known f-rings (see a partial list in \cite[Remark~7.10]{BMO}, and the references quoted there).  Note that an indecomposable ring $R$ is an f-ring exactly when $R$ does not have any non-trivial pure ideals.

\begin{thm} \label{fP'}  The property of being an f-ring is a ring Pierce property. \end{thm}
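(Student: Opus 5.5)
We must show: $R$ is an f-ring if and only if each Pierce stalk $R_x$ has no non-trivial pure ideals. (An indecomposable ring is an f-ring exactly when it has no non-trivial pure ideals, and each $R_x$ is indecomposable.) The plan is to relate pure ideals of $R$ to pure ideals of the stalks via the projection $\pi_x\colon R\to R_x=R/xR$, and to patch stalk information over clopen sets in $\spec\B(R)$, in the usual compactness fashion.

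\emph{($\Leftarrow$)} Assume every $R_x$ has only the trivial pure ideals $\bz$ and $R_x$. Let $I$ be a pure ideal of $R$ and $x\in\spec\B(R)$. The image $I_x=(I+xR)/xR$ is pure in $R_x$: for $a\in I$ there is $b\in I$ with $ab=a$, and this persists in $R_x$. Hence $I_x=\bz$ or $I_x=R_x$. We show $I=\langle I\cap\B(R)\rangle$. Take $a\in I$ and pick $b\in I$ with $a(1-b)=0$.
\begin{itemize}
\item If $I_x=R_x$, then $1_x=c_x$ for some $c\in I$. Hence there is $e\notin x$ with $e=ce\in I$, and $e\in I\cap \B(R)$.
\item If $I_x=\bz$, then $b_x=0$, so there is $f\notin x$ with $bf=0$. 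Then $af=abf=0$.
\end{itemize}
By compactness, finitely many such clopen neighbourhoods cover $\spec\B(R)$. Make them orthogonal, $g_1,\dots,g_k$ with $\sum g_j=1$, each $g_j$ lying below an $e$ of the first type or an $f$ of the second type. Then $a=\sum a g_j$. Terms of the second type vanish, and terms of the first type satisfy $ag_j=ag_je\in\langle I\cap\B(R)\rangle$, since $g_j\le e$ and $e\in I$. So $I$ is generated by idempotents.

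\emph{($\Rightarrow$)} Assume $R$ is an f-ring and let $J$ be a pure ideal of $R_x$. Put $I=\pi_x^{-1}(J)$. We claim $I$ is pure in $R$. Given $a\in I$, pick $b\in I$ with $(ab-a)_x=0$. Then there is $e\notin x$ with $e(ab-a)=0$. Set $b'=eb+(1-e)$. Since $1-e\in x\sbq I$, we have $b'\in I$, and
\[ab'=eab+a(1-e)=ea+a-ea=a.\]
So $I$ is pure, hence generated by idempotents. Thus $J=I_x$ is generated by the images of idempotents of $R$, and those images lie in $\{0,1\}$ because $R_x$ is indecomposable (for $g\in\B(R)$, $g_x$ is $0$ or $1$ according as $g\in x$ or not). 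Therefore $J=\bz$ or $J=R_x$.

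The main obstacle is the patching step in ($\Leftarrow$). One must check that the local data (a single idempotent in $I$, or local vanishing of $a$) glue into a global expression in idempotents of $I$. The key observation making this work is that in the second case we use the element $b$, not $a$, to get local vanishing, since purity gives $b$ with $a=ab$. The rest is routine Pierce-sheaf bookkeeping, as in Proposition~\ref{flP}.
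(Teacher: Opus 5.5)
Your proof is correct and is essentially the paper's argument: in ($\Rightarrow$) your $\pi_x^{-1}(J)$ is exactly the paper's kernel $K$ of $R\to R_x\to R_x/I_x$ (you check its purity elementwise, where the paper combines flatness of $R_x/I_x$ over $R_x$ with flatness of $R_x$ over $R$), and in ($\Leftarrow$) you carry out directly the compactness patching that the paper packages as the principle ``$K_x=L_x$ for all $x\in\spec\B(R)$ implies $K=L$,'' applied to $I$ and $\langle I\cap\B(R)\rangle$. One minor remark: the detour through $b$ when $I_x=\bz$ is harmless but unnecessary, since $a\in I$ already gives $a_x=0$ and hence $af=0$ for some $f\notin x$, so it is not really the key point of the argument.
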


\begin{proof}   First suppose that $R$ is an f-ring. Let $x\in \spec \B(R)$ and $I_x$ be a pure ideal of $R_x$.  Consider $\phi_x\colon R\to R_x \to R_x/I_x$ and $K =\ker \phi_x$.  Then $K$ is a pure ideal of $R$ containing $xR$.  This is because $R_x/I_x$ is a flat $R_x$-module and $R_x$ is a flat $R$-module.  Hence, $K$ is generated by idempotents, say $K=ER$, $E=\B(R)\cap K$.  Note that the image of $K$ in $R_x$ is $I_x$, and that $xR\sbq K$. 

If $xR=K$, then $I_x=\bz$. Otherwise, there is $e\in E\setminus x$. In that case, the image of $K$ in $R_x$ is all of $R_x$.  Hence, $I_x=\bz$ or $I_x=R_x$. 

For the converse, where each $R_x$ is an indecomposable f-ring, consider a pure ideal $I$ of $R$ and the pure ideal $ER$, where $E= I\cap \B(R)$. It will be shown that $I=ER$. Take $x\in \spec \B(R)$.

The completion of the proof follows a schema suggested by the referee and is more efficient than in the original. 

The fact that for ideals $K$ and $L$ of $R$, if for all $x\in \spec \B(R)$, $K_x=L_x$ then $K=L$ is used.  

For all $x\in \spec \B(R)$, $I_x=R_x$ or $I_x = \bz_x$.  In case $I_x=\bz_x$, it follows that $(ER)_x= \bz_x$. If $I_x=R_x$, there is $a\in I$ such that $(a-1)_x=0_x$. Then, there is $e\in \B(R)\setminus x$ such that $ae=e$.  It follows that $e\in E$ and, hence, $(ER)_x=R_x$. Hence, $I = ER$.  \end{proof}

Some consequences of this theorem will be elaborated upon in the next section.

Although these properties are not used in what follows, it can also be shown that the following are also ring Pierce properties: (1)~arithmetical, (2)~WDML of \cite{NR}, (3)~B\'{e}zout, and   (4)~seminormal (see \cite{Sw}). \epar
 
\setcounter{section}{3} \setcounter{thm}{0}
\noindent\tbf{3. More on f-rings.} This section is devoted to some corollaries of Theorem~\ref{fP'} and to the connection between f-rings and flat extensions by idempotents.  

Recall that if $R$ is a ring such that each pure ideal is generated by a single idempotent, $R$ is called an \emph{F-ring} (see \cite{V}).  

\begin{cor} \label{Fring}   If $R$ is an F-ring, then $R$ is a direct product of a finite number of indecomposable rings without non-trivial pure ideals; and conversely.\end{cor}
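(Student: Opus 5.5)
Suppose first that $R$ is an F-ring. The plan is to show that $\B(R)$ is finite; then $R$ splits as a finite product of indecomposable rings. Theorem~\ref{fP'} and the remark before it then identify each factor as an indecomposable ring with no non-trivial pure ideals. For the converse, we check directly that pure ideals of a finite product are generated by a single idempotent.

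\emph{Finiteness of $\B(R)$.} For each $x\in \spec \B(R)$, the Pierce kernel $xR$ is generated by idempotents, so it is pure. By the F-ring hypothesis, $xR=eR$ for a single idempotent $e$, and then $e\in x$. Hence $x$ is the principal ideal of $\B(R)$ generated by $e$, so $\{x\}$ is clopen in $\spec \B(R)$; concretely, it is the basic open set determined by $1-e$. Thus $\spec \B(R)$ is discrete. Since it is compact, it is finite, and a boolean algebra with finitely many prime ideals is finite.

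If $\B(R)$ is finite, take its atoms $e_1,\dots,e_n$. Then $R\cong \prod_{i} Re_i$, and each $Re_i$ is indecomposable. Each $Re_i$ is a Pierce stalk of $R$. By Theorem~\ref{fP'}, since $R$ is an f-ring (an F-ring is one trivially), each stalk is an f-ring. An indecomposable f-ring has no non-trivial pure ideals, as noted before Theorem~\ref{fP'}.

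\emph{Converse.} Let $R=R_1\times\dots\times R_n$, where each $R_i$ is indecomposable with no non-trivial pure ideals. These $R_i$ are the Pierce stalks, and each is an f-ring, so by Theorem~\ref{fP'} $R$ is an f-ring. Alternatively, one can note directly that any ideal is $I=I_1\times\dots\times I_n$ and that $I$ is pure iff each $I_i$ is pure. Now $\B(R)$ is finite, so any pure ideal $I$ equals $ER$ with $E\sbq \B(R)$ finite. Then $ER=fR$, where $f=\bigvee E$, and $f$ is idempotent. Hence $R$ is an F-ring.

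The main obstacle is the finiteness step, specifically checking that a principal prime ideal $x$ of $\B(R)$ gives an isolated point. This holds because the open set $\{y : 1-e\notin y\}$ consists exactly of the primes containing $e$, and since $x=\{g: g\le e\}$ is maximal, the only such prime is $x$. The rest of the argument is routine.
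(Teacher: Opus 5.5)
Your proof is correct, and it departs from the paper at only one point: the finiteness of $\B(R)$. The paper simply cites \cite[Theorem~7.5 and Remark~7.12]{BMO} for this.

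You prove it directly instead. Each Pierce kernel $xR$ is generated by idempotents, hence pure, so in an F-ring $xR=eR$ for some $e\in x$. The membership $e\in x$ uses $xR\cap\B(R)=x$, which holds because an idempotent of $xR$ lies below a finite join of elements of $x$; you should state this. It follows that $x$ is the principal ideal of $\B(R)$ generated by $e$, and that $\{x\}$ is the basic open set $\{y\mid 1-e\notin y\}$. So $\spec\B(R)$ is discrete and compact, hence finite.

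This makes the argument self-contained and purely boolean-algebraic. It also shows slightly more than needed: the F-ring hypothesis is only applied to the Pierce kernels. Finiteness of $\B(R)$ therefore follows as soon as every maximal ideal of $\B(R)$ generates a principal ideal of $R$.

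From there, your use of Theorem~\ref{fP'} is exactly the paper's step: it identifies the factors $Re_i$, which are the Pierce stalks, as indecomposable rings without non-trivial pure ideals.

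For the converse, the paper just observes that a finite product of F-rings is an F-ring. Your main route, Theorem~\ref{fP'} together with $ER=(\bigvee E)R$ for finite $E\subseteq\B(R)$, is a little more roundabout but valid. Your componentwise alternative is essentially the paper's argument.
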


\begin{proof} By \cite[Theorem~7.5 and Remark~7.12]{BMO}, when $R$ is an F-ring, $\B(R)$ is finite.  Then, Theorem~\ref{fP'} gives the result.  

The converse is obvious since a finite product of F-rings is an F-ring. 
\end{proof} 

By Corollary~\ref{Fring}, the property of being an F-ring is clearly not a ring Pierce  property.

\begin{cor} \label{noethf}   Let $R$ be a ring whose Pierce stalks are noetherian.  Then, $R$ is an f-ring, and for any $A\in \igr{R}$, $A$ is an f-ring. \end{cor}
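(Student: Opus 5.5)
By Theorem~\ref{fP'}, it suffices to show that every Pierce stalk $R_x$ is an f-ring. Since $R_x$ is indecomposable, this amounts to showing that a noetherian indecomposable ring has no non-trivial pure ideals. The second assertion will then follow from Lemma~\ref{addi}. Each Pierce stalk of $A\in\igr{R}$ is a homomorphic image of some $R_x$, hence noetherian, so applying the first assertion to $A$ shows that $A$ is an f-ring.

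So let $S$ be noetherian and indecomposable, and let $I$ be a pure ideal of $S$. Since $S$ is noetherian, $I=\langle a_1,\dots,a_n\rangle$ is finitely generated. Purity gives, for each $a_i$, some $b_i\in I$ with $a_ib_i=a_i$. From these we build one element $b\in I$ with $a_ib=a_i$ for all $i$. The standard trick is to set $c_i=1-b_i$, so that $a_ic_i=0$, and put $1-b=\prod_i(1-b_i)$. Expanding the product shows $b\in I$, and clearly $a_i(1-b)=0$ for every $i$.

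With this $b$ we have $yb=y$ for all $y\in I$, and in particular $b^2=b$. Thus $b$ is an idempotent and $I=bS$. Indecomposability of $S$ then forces $b\in\{0,1\}$, so $I=\bz$ or $I=S$, as required.

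No step is a real obstacle. The only point needing care is the finite-generation argument, namely that a finitely generated pure ideal is generated by a single idempotent; this is exactly where the noetherian hypothesis on the stalks is used. One could cite this fact directly (e.g.\ from \cite{DeM}), but the product construction above is short enough to include. It is also worth noting that $R$ itself need not be noetherian, which is why the Pierce property of Theorem~\ref{fP'} is essential here.
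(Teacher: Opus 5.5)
Your proof is correct and has the same structure as the paper's: Theorem~\ref{fP'} reduces the question to the Pierce stalks, and Lemma~\ref{addi} passes noetherianity of the stalks on to $A$. The only difference is that the paper cites \cite[Proposition~4.5 and Corollary~4.7]{Jp} (noetherian rings have finite pure length, hence are f-rings), whereas you prove directly that a finitely generated pure ideal is generated by an idempotent; this is a valid self-contained replacement, and it actually shows that every noetherian ring is an F-ring, so you never needed indecomposability.
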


\begin{proof} By \cite[Proposition~4.5 and Corollary~4.7]{Jp} all the Pierce stalks of $R$ are f-rings. Then Theorem~\ref{fP'} gives the result about $R$. However, by Lemma~\ref{addi}, this carries over to the Pierce stalks of $A$.  
\end{proof} 

Of course, Corollary~\ref{noethf} holds if $R$ is itself noetherian, but, here, $R$ is an F-ring. There are non-noetherian rings whose Pierce stalks are noetherian, e.g., any regular ring that is not a finite product of fields; or the ring of Theorem~\ref{acP}(2). See also Example~\ref{Prf1}.

It was known that a wB ring is an f-ring.  However, the following is more general.

\begin{cor}\label{awBf}  If $R$ is an awB ring, then it is an f-ring. \end{cor}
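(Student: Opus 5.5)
The plan is to reduce the statement to the indecomposable case by Theorem~\ref{fP'}. By Proposition~\ref{known}(iii) (that is, \cite[Theorem~2.2]{NR}), $R$ is awB if and only if every Pierce stalk $R_x$ is a domain. By Theorem~\ref{fP'}, $R$ is an f-ring if and only if every $R_x$ is an f-ring. Since each $R_x$ is indecomposable, being an f-ring for $R_x$ means precisely that it has no pure ideals other than $\bz$ and $R_x$ (as noted before Theorem~\ref{fP'}). So it suffices to prove one thing: \emph{a domain has no non-trivial pure ideals.}

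For this, let $D$ be a domain and $I$ a pure ideal of $D$ with $I\ne\bz$. I will use the element-wise characterization of purity recalled in the introduction. Pick $0\ne a\in I$; purity gives $b\in I$ with $ab=a$. Then $a(1-b)=0$, and since $D$ is a domain and $a\ne 0$, this forces $b=1$. Hence $1\in I$ and $I=D$. Therefore every pure ideal of $D$ is $\bz$ or $D$, and both are generated by idempotents ($\bz$ by the empty set or by $0$, and $D$ by $1$). So $D$ is an f-ring.

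Putting these together, each Pierce stalk of an awB ring $R$ is a domain, hence an f-ring, and Theorem~\ref{fP'} gives that $R$ is an f-ring. There is essentially no obstacle here. The substantive work is already in Theorem~\ref{fP'} and in the Pierce characterization of awB rings from \cite{NR}. The only point needing care is to confirm that the Pierce stalks are exactly the indecomposable rings to which the ``no non-trivial pure ideals'' criterion applies, which holds because Pierce stalks always have only trivial idempotents.
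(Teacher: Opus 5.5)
Your proposal is correct and follows essentially the same route as the paper: the Pierce characterization of awB rings from \cite{NR} combined with Theorem~\ref{fP'}. The paper simply asserts that domains are f-rings, whereas you verify it directly from the elementwise characterization of purity.
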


\begin{proof}  The awB rings are exactly those whose Pierce stalks are domains (\cite[Theorem~2.2]{NR}).  Hence, because domains are f-rings, Theorem~\ref{fP'} applies.
\end{proof} 

The following is known but also follows readily from Theorem~\ref{fP'}.

\begin{cor}\label{clf}   If $R$ is a clean ring then $R$ is an f-ring. \end{cor}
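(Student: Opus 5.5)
By Theorem~\ref{fP'}, it is enough to check that each Pierce stalk $R_x$ of a clean ring $R$ is an f-ring. Since $R_x$ is indecomposable, this means showing that $R_x$ has no pure ideals other than $\bz$ and $R_x$. By \cite[Proposition~1.2]{BS} (see also Proposition~\ref{known}(i)), every Pierce stalk of a clean ring is local. So the corollary reduces to one claim: a local ring has no non-trivial pure ideals.

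To prove the claim, let $L$ be local with maximal ideal $\mf{m}$, and let $I$ be a pure ideal of $L$ with $I\ne L$. Then $I\sbq\mf{m}$. Take $a\in I$. By the characterization of purity recalled in the introduction, there is $b\in I$ with $ab=a$, so $a(1-b)=0$. Because $b\in\mf{m}$, the element $1-b$ is a unit of $L$, and therefore $a=0$. Hence $I=\bz$. The only pure ideals of $L$ are thus $\bz$ and $L$, both generated by idempotents, so $L$ is an f-ring.

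Putting these together, each $R_x$ is local and hence an f-ring, and Theorem~\ref{fP'} gives that $R$ is an f-ring. No step here is a real obstacle. The only point that needs care is the citation that the stalks of a clean ring are local, and that is already recorded earlier in the paper.
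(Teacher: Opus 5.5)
Your proof is correct and follows the paper's route: you reduce via Theorem~\ref{fP'} to the Pierce stalks, use that these are local for a clean ring, and observe that a local ring has no non-trivial pure ideals. The paper only says this last fact is ``readily seen''; your argument via $a(1-b)=0$ with $1-b$ a unit supplies that missing detail.
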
 

\begin{proof} It is readily seen that a local ring is an f-ring.  However, clean rings are exactly those whose Pierce stalks are local. 
\end{proof}

The following is a scheme for constructing rings where the Pierce stalks are f-rings.  One of the many characterizations of a pure ideal $I$ in $R$, is that if $a\in I$ then there is $b\in I$ with $ab = a$ (already used in Lemma~\ref{mp}(1)). This gives rise to the notion of a \emph{pure sequence} (see \cite[\S4]{Jp}). This is a sequence $a_1, a_2, ...$ in a ring $R$ such that $\langle a_i\rangle\, \subset\, \langle a_{i+1}\rangle $ and, for all $i$, $a_{i+1}a_i = a_i$.   If $R$ is such that there is $n\in \N$  such that every pure sequence terminates in at most $n$ steps (meaning $a_{n+1}=a_{n+2} =\cdots$ and $a_{n+1}\in \B(R)$), then $R$ is said to have \emph{finite pure length} (\cite[\S4]{Jp}).  If $R$ is a ring of finite pure length then $R$ is an f-ring (but this is not a necessary condition, see, e.g., Example~\ref{Prf1}). Every noetherian ring is of finite pure length (\cite[Proposition~4.5]{Jp}). 

There are f-rings such that all pure sequences are finite but that there is no bound on the lengths, using Construction~\ref{con2}(2).  There are also f-rings where there are infinite pure sequences, using \cite[Proposition~4.6]{Jp}.

\begin{ex} \label{Prf1}   Let $\{R_\alpha\}_{\alpha \in A}$, $A$ some set, be a family of indecomposable f-rings that have a common subring $R$ that has finite pure length.  Let $S$ be a subring of $\prod_{\alpha\in A} R_\alpha$ of those elements whose components are in $R$ except for finitely many.  Then, $S$ is an f-ring. \end{ex} 

\begin{proof} This is an example of Construction~\ref{con2}.  It suffices to show that every Pierce stalk of $S$ is an f-ring.  The space $\spec\B(S)$ is $\beta A$. The Pierce stalks corresponding to the elements of $A$ are the $R_\alpha$. The other Pierce stalks are ultrapowers of $R$.  It suffices to show that they are f-rings. However, an arbitrary power of copies of $R$ is an f-ring (\cite[Proposition~4.6]{Jp}) and, thus, ultrapowers of $R$ are f-rings by Theorem~\ref{fP'} and Construction~\ref{con1}. \end{proof}

One case of Example~\ref{Prf1} would be if one of the  $R_\alpha$ were noetherian. Then, any common subring would be of finite pure length (\cite[Proposition~4.5]{Jp}). 

In a product $P$ of indecomposable rings $\{R_\alpha\}_{\alpha \in A}$, the Pierce stalks of $P$ are the rings $R_\alpha$ and ultraproducts of them, as in Construction~\ref{con1}. If the $R_\alpha$ are f-rings for a reason that is preserved by ultraproducts, then $P$ is also an f-ring.  This would apply if all the $R_\alpha$ were domains; although that is already covered by Corollary~\ref{awBf}. 

However, a product of f-rings is not necessarily an f-ring.  Indeed, even a product of noetherian rings need not be an f-ring -- see \cite[Remark~\S4]{Jp} for an example. 

The more general question of which products of  (indecomposable) f-rings are f-rings remains unresolved, as mentioned above.  However, one case has a positive answer: a product of awB rings is readily seen to be an awB ring; these are f-rings.

In \cite[\S~7]{BMO} there is a discussion of flat extensions by idempotents and their relationship to f-rings.  Then, \cite[Theorem~7.9]{BMO} says that the following are equivalent: (a)~$R$ is an f-ring, and (b)~an $R$-algebra $A$ is locally Specker if and only if $A\in \igr{R}$ and $A_R$ is flat.  This theorem will be reproved here using Theorem~\ref{fP'}.  Before then, there is a presentation of rings that are not f-rings.

There is a reference to a family of examples in \cite[$\dagger$~2.9]{J} of indecomposable rings with non-trivial pure (there called ``neat'') ideals clearly not idempotent generated.  These are recalled here.  Let $X$ be a connected completely regular space and $\cx$ the ring of continuous real valued functions on $X$ (see \cite[Chapter~3]{GJ}). Fix $\alpha \in X$ and let $\mc{O}_\alpha$ be the ideal of functions zero on a neighbourhood of $\alpha$. It is shown in \cite{J} that $\mc{O}_\alpha$ is a pure ideal not generated by idempotents. This line of reasoning produces examples of flat extensions by idempotents that are not (locally) Specker; see the next lemma.

\begin{lem} \label{indnf}   Let $R$ be an indecomposable ring that is not an f-ring. Then, there is $A\in \igr{R}$ such that $A_R$ is flat but $A$ is not a Specker $R$-algebra.  \end{lem}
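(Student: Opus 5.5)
Since $R$ is indecomposable and not an f-ring, there is a pure ideal $I$ of $R$ with $\bz\ne I\ne R$; such an $I$ cannot be generated by idempotents, because $\B(R)=\{0,1\}$. The natural candidate is $A=R\times R/I$, that is, $R$ with one idempotent $e=(1,0)$ adjoined, via the diagonal map $\phi\colon R\to A$, $r\mapsto (r, r+I)$. Then $A=\phi(R)+\phi(R)e$, so $A\in\igr{R}$. Note that $\phi$ is injective, so $R\sbq A$.

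Flatness is the first step. As an $R$-module, $A\cong R\oplus R/I$. Here $R$ is free, and $R/I$ is flat because $I$ is pure; hence $A_R$ is flat.

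Next I show that $A$ is not a Specker $R$-algebra. If it were, then $A\cong R[B]_{Sp}$ for some boolean algebra $B$, and $R[B]_{Sp}$ is a free $R$-module. I would exploit this in one of two ways. (i)~Use $\B(A)$: by \cite[Theorem~3.6]{BMMO}, since $R$ is indecomposable, $\B(R[B]_{Sp})\cong B$. Here $\B(A)=\{0,1,e,1-e\}$ (as $R$ and $R/I$ are indecomposable; $R/I$ is indecomposable because $R\to R/I$ is a flat epimorphism, and one can lift idempotents via purity, or else simply argue directly). So $B$ is the four-element algebra, and the Specker algebra is $R\times R$ with each factor $\cong R$. (ii)~More robustly, avoid identifying $\B(A)$ and argue on the module: every nonzero $y_f$ gives $Ry_f\cong R$, so every nonzero idempotent $f$ of the Specker algebra has $\ann_R f=\bz$. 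In $A$, however, the idempotent $1-e=(0,1)$ is nonzero and $\ann_R(1-e)=I\ne\bz$. For this to be a contradiction the isomorphism must be one of $R$-algebras, so the $R$-action on $A$ must correspond; I take ``is a Specker $R$-algebra'' to mean this.

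The main obstacle is to check that $\ann_R$ of every nonzero idempotent of $R[B]_{Sp}$ is $\bz$, and not merely of those of the form $y_f$. I would handle this with the orthogonal decomposition in Lemma~\ref{folk}: an idempotent $g=\sum r_jy_{f_j}$ with the $f_j$ orthogonal has each $r_j$ idempotent in $R$, hence $r_j\in\{0,1\}$, so $g=y_f$ for some $f\in B$. Indeed, since $y_{f_j}$ has zero annihilator, $r_j^2y_{f_j}=r_jy_{f_j}$ gives $r_j^2=r_j$, using freeness of the $y_{f_j}$, which follows from \cite[Lemma~2.6]{BMMO} and orthogonality. This reproves the fact $\B(R[B]_{Sp})\cong B$ in the indecomposable case, and the contradiction with $\ann_R(1-e)=I\ne\bz$ then follows.
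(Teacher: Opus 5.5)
Your proof is correct if you use route (ii). It follows a different route from the paper's.

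The paper does not work with $R/I$ directly. It passes to a Pierce stalk $T_y$ of $T=R/I$, takes the pure kernel $I(y)\spq I$ of $R\to T_y$, and sets $A=R\times R/I(y)$. Because $R/I(y)$ is indecomposable, $\B(A)$ is exactly the four-element algebra and the Pierce stalks of $A$ are $R$ and $R/I(y)$. Hence $(0,\bar 1)$ must correspond to some $y_f$ in a putative Specker presentation, which gives $R/I(y)\cong R$ and contradicts \cite[Lemma~2.6]{BMMO}.

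You keep an arbitrary non-trivial pure ideal and never determine $\B(A)$. What you need instead is that, over an indecomposable ring, every non-zero idempotent of $R[B]_{Sp}$ is some $y_f$ and so has zero $R$-annihilator. You derive this correctly from Lemma~\ref{folk} and \cite[Lemma~2.6]{BMMO}; it is essentially \cite[Theorem~3.6]{BMMO}. Together with $\ann_R(0,1)=I\ne\bz$, this gives the contradiction. Your version is shorter and needs no Pierce-sheaf detour. The paper's version produces an $A$ whose idempotents and Pierce stalks are fully explicit, which shows directly why $A$ fails to be (locally) Specker: one Pierce stalk is a proper quotient of $R$.

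You should drop route (i), because its claim that $R/I$ is indecomposable is false in general. Idempotents need not lift modulo a pure ideal, and $R\to R/I$ being a flat epimorphism does not help. For example, in $R=\C(\R)$ let $\mc{O}_0$ and $\mc{O}_1$ be the ideals of functions vanishing on a neighbourhood of $0$, respectively of $1$. These ideals are pure and comaximal. So $I=\mc{O}_0\cap\mc{O}_1$ is a non-trivial pure ideal with $R/I\cong R/\mc{O}_0\times R/\mc{O}_1$, and $\B(R\times R/I)$ then has more than four elements. This is precisely why the paper passes to a Pierce stalk of $R/I$. Since your actual argument uses only (ii), the proof stands.
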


\begin{proof} There is a non-trivial pure ideal $I$ in $R$.  Consider $T= R/I$. Since $T_R$ is flat, all the Pierce stalks of $T$ are $R$-flat.  Let $T_y$ be one of them.  Then, $T_y$ is a flat indecomposable $R$-algebra. Moreover, $T_y$ is a homomorphic image of $R$ (since it is a homomorphic image of $R/I$).  Then, the kernel, $I(y)$, of the natural homomorphism $R\to T_y$ is a pure ideal containing $I$.  Now, consider $A=R\times R/I(y)$, a flat $R$-algebra. It follows that $\B(A) = \{(0,\bar{0}), (0,\bar{1}), (1, \bar{0}), (1, \bar{1})\}$, and the Pierce stalks of $A$ are $R$ and $R/I(y)$. It follows that  $A$ is generated by $R$ and $(0,\bar{1})$; i.e., $A\in \igr{R}$.   If $A$ were a Specker $R$-algebra, $A(0,\bar{1}) \cong R/I(y)$ would be isomorphic to $R$, as $R$-modules (\cite[Lemma~2.6]{BMMO}).  This is clearly not the case.  Hence, $A$ is the example.
\end{proof}

One source of rings $R$ as in Lemma~\ref{indnf} is $\cx$ where $X$  is a connected metric space, e.g., $X=\R$. 

A proof of Lemma~\ref{indnf} without the restriction to $R$ being indecomposable is possible.  However, that follows directly from the following proposition: that is, if $R$ is not an f-ring there is an $A\in \igr{R}$ such that $A_R$ is flat but $A$ is not a locally Specker $R$-algebra.

Lemma~\ref{indnf}, along with Theorem~\ref{fP'}, allows an alternate proof to a known result, \cite[Theorem~7.9]{BMO}, not using the powerful \cite[Lemma~7.7]{BMO}.  

\begin{prop}\cite[Theorem~7.9]{BMO} \label{BMO7.9}   The following statements are equivalent for a ring $R$. 

(a) $R$ is an f-ring.

(b) A ring $A$ is a locally Specker $R$-algebra if and only if $A_R$ is flat and $A\in \igr{R}$. \end{prop}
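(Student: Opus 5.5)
We prove (a)$\Rightarrow$(b) and (b)$\Rightarrow$(a), using Pierce stalks throughout so that Theorem~\ref{fP'} reduces everything to indecomposable rings.

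\emph{(a)$\Rightarrow$(b).} One direction of (b) holds for every ring $R$. A locally Specker $R$-algebra $A$ lies in $\igr{R}$, since it is generated over $\phi(R)$ by the $z_e$. It is $R$-flat by \cite[Proposition~7.1]{BMO}, as corrected after Corollary~\ref{lSff}. Concretely, each $A_x$ is a Specker $R_x$-algebra, hence $R_x$-free, and Proposition~\ref{flP}(1) then gives flatness.

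Now let $R$ be an f-ring, and let $A\in\igr{R}$ with $A_R$ flat. The plan is to verify the stalkwise criterion \cite[Theorem~3.3(3)]{BMO}: every Pierce stalk $A_z$ should be isomorphic, via the natural map, to the Pierce stalk $R_x$ with $x=\phi\inv(z)\cap\B(R)$. By Lemma~\ref{addi}, $A_z$ is a quotient $R_x/J$ for some ideal $J$ of $R_x$.
\begin{itemize}
\item Since $A_R$ is flat and $A_z$ is a localization of $A$ (Lemma~\ref{mp}(2)), $A_z$ is a flat $R$-module.
\item $A_z$ is annihilated by $xR$, so it is a flat $R_x$-module; one checks this by base change $R_x\otimes_R -$, because $R_x\otimes_R A_z\cong A_z$.
\item Hence $R_x/J$ is $R_x$-flat, so $J$ is a pure ideal of $R_x$.
\end{itemize}
By Theorem~\ref{fP'}, $R_x$ is an indecomposable f-ring, so $J=\bz$ or $J=R_x$. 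The second case is impossible because $A_z\ne 0$. Therefore $A_z\cong R_x$ for every $z$, and $A$ is locally Specker.

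The step I expect to need the most care is matching the form of \cite[Theorem~3.3]{BMO}. This involves using $\phi(R)$ rather than $R$, and checking that ``every stalk of $A$ is the natural image of a stalk of $R$'' really suffices, exactly as in the proof of Corollary~\ref{clean}(2).

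\emph{(b)$\Rightarrow$(a).} Argue by contraposition and suppose $R$ is not an f-ring. By Theorem~\ref{fP'}, some stalk $R_x$ has a nontrivial pure ideal. Copy the construction of Lemma~\ref{indnf}, but in a form that lives over $R$.
\begin{itemize}
\item Take a nontrivial pure ideal $I_x$ of $R_x$, and let $K$ be the kernel of $R\to R_x/I_x$. Then $K$ is pure in $R$, exactly as in the first half of the proof of Theorem~\ref{fP'}.
\item Set $A=R\times R/K$. This algebra is $R$-flat, since both factors are flat, and it is generated over $R$ by the idempotent $(0,\bar1)$, so $A\in\igr{R}$.
\item Choose $z\in\spec\B(A)$ lying over $x$ inside the second factor. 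Then $A_z\cong R_x/I_x$, with $I_x\ne\bz$. So $A_z$ is not isomorphic, via the natural map, to the corresponding stalk of $\phi(R)=R$, which is $R_x$ because $\phi$ is injective.
\end{itemize}
By \cite[Theorem~3.3]{BMO}, $A$ is therefore not locally Specker, and (b) fails. The one detail to check in this direction is that the stalk of $R/K$ at the point over $x$ is indeed $R_x/I_x$, rather than some further quotient.
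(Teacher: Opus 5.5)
Your proof is correct. For (a)$\Rightarrow$(b) it is the paper's argument: the natural surjection $R_x\to A_z$ of Lemma~\ref{addi} has pure kernel because $A_z$ is $R_x$-flat, and Theorem~\ref{fP'} forces that kernel to be $\bz$. Your justification that $A_z$ is $R$-flat is, if anything, cleaner than the paper's citation of Proposition~\ref{flP}(1). You use that $A_z$ is a classical localization of the flat module $A$, whereas Proposition~\ref{flP}(1) concerns stalks over $\B(R)$ rather than $\B(A)$.

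For (b)$\Rightarrow$(a) you take a genuinely different route.

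\emph{The paper's route.} It works over the stalk. It applies Lemma~\ref{indnf} to $R_x$, obtaining a flat $A\in\igr{R_x}$ that is not Specker over $R_x$. The obstruction is module-theoretic: in a Specker $R_x$-algebra, $R_xe\cong R_x$ for every nonzero idempotent $e$ (\cite[Lemma~2.6]{BMMO}). The paper then leaves implicit the transfer back to $R$. This means checking that this $A$ is $R$-flat (because $R_x$ is), that it lies in $\igr{R}$, and that not being Specker over $R_x$ rules out being locally Specker over $R$.

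\emph{Your route.} You pull $I_x$ back to the pure ideal $K$ of $R$ and build $R\times R/K$ directly over $R$, so (b) is contradicted exactly as stated. You detect the failure with the \emph{only if} half of the stalk criterion: locally Specker implies that the natural maps $\phi(R)_x\to A_z$ are isomorphisms. The paper itself uses this half in Corollary~\ref{clean}(3) and in the remark after Definition~\ref{LSp}. What you wrote is essentially the non-indecomposable version of Lemma~\ref{indnf} that the paper mentions but does not write out.

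\emph{A correction on the detail you flagged.} The claim $A_z\cong R_x/I_x$ can actually fail, because $R_x/I_x$ need not be indecomposable. For example, take $R=\C(\R)$ and $I$ the functions vanishing on a neighbourhood of $\{0,1\}$. Then $I$ is a nontrivial pure ideal, and $R/I$ is the product of the germ rings at $0$ and at $1$. This does not hurt your argument. Every point $z$ of the second factor lies over $x$, since $xR\sbq K$ and idempotents outside $x$ map to $1$ in $R/K$. For any such $z$, the natural map $R_x\to A_z$ factors through $R_x\to R_x/I_x$, so its kernel contains $I_x\ne\bz$. Hence it is not an isomorphism, and that is all the contradiction requires. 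You should replace ``$A_z\cong R_x/I_x$'' by ``$A_z$ is a quotient of $R_x/I_x$.''
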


\begin{proof} (a) $\Rightarrow$ (b) Assume first that $R$ is an f-ring.  Let $A = R[\B(A)]$ and assume that $A_R$ is flat. Take $y\in \spec \B(A)$ and $x= y\cap \B(R)$. Then, there is a surjection $R_x\to A_y$ (Lemma~\ref{addi}). By Proposition~\ref{flP}(1), $A_y$ is $R$-flat,  and then it is $R_x$-flat, as well.  But, by Theorem~\ref{fP'}, $R_x$ has no non-trivial pure ideals, and thus the kernel of $R_x\to A_y$ is zero.  This makes $A$ a locally Specker $R$-algebra (\cite[Theorem~3.3(3)]{BMO}). 

If $A$ is a locally Specker $R$-algebra, it is in $\igr{R}$ and $A_R$ is flat.  

(b) $\Rightarrow$ (a) Assume that (b) holds. By Theorem~\ref{fP'}, it suffices to show that for $x\in \spec \B(R)$,  $R_x$ has no non-trivial pure ideals.  If $\bz\ne I\subset R_x$ is a pure ideal, then Lemma~\ref{indnf} gives a ring $A\in \igr{R_x}$ that is not a Specker $R_x$-algebra, but $A_{R_x}$ is flat. This contradicts (b), showing that $R_x$ is an indecomposable f-ring. Hence, $R$ is an f-ring.
\end{proof}

There are more applications of Theorem~\ref{fP'}.

Both the properties pm-ring (Proposition~\ref{known}(iv)) and f-ring (Theorem~\ref{fP'}) are ring Pierce properties.  It will turn out that another characterization of a clean ring is that it is both a pm-ring and an f-ring.

\begin{prop} \label{pmfind}   Let $R$ be an indecomposable ring. It is both a pm-ring and an f-ring if and only if it is local. \end{prop}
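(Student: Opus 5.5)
The easy direction is ``local implies both''. If $R$ is local, every prime ideal lies in the unique maximal ideal, so $R$ is a pm-ring. To see that $R$ is an f-ring, let $I$ be a pure ideal and $a\in I$. Purity gives $b\in I$ with $ab=a$, so $a(1-b)=0$. If $b$ lies in the maximal ideal, then $1-b$ is a unit and $a=0$. Otherwise $b$ is a unit in $I$, so $I=R$. Hence every pure ideal is $\bz$ or $R$, and both are generated by idempotents. This is the remark already used in Corollary~\ref{clf}.

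For the converse, let $R$ be indecomposable, a pm-ring and an f-ring, and suppose $R$ has two distinct maximal ideals $\mf{m}_1\ne\mf{m}_2$. The aim is to build a non-trivial pure ideal, which contradicts the f-ring hypothesis: since $\B(R)=\{0,1\}$, a pure ideal generated by idempotents must be $\bz$ or $R$.

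The candidate is the pure part of $\mf{m}_1$,
\[
O_1=\{a\in R\mid a(1-b)=0 \text{ for some } b\in \mf{m}_1\},
\]
which is the kernel of the localization map $R\to R_{\mf{m}_1}$. The argument has three steps.

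\textbf{Step 1: $O_1$ is a proper ideal.} It is contained in $\mf{m}_1$. Indeed, if $a(1-b)=0$ with $b\in\mf{m}_1$, then $1-b\notin\mf{m}_1$, and $a(1-b)=0\in\mf{m}_1$ forces $a\in\mf{m}_1$.

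\textbf{Step 2: $O_1$ is pure.} This is the standard fact that the kernel of $R\to R_{\mf{m}}$ is pure. Given $a\in O_1$ with $a(1-b)=0$ and $b\in\mf{m}_1$, one wants $c\in O_1$ with $ac=a$. Setting $c=1-b$ gives $ac=a$, but $c$ need not lie in $O_1$. If this becomes awkward, I will instead quote the known characterization that, for the map $R\to R_{\mf{m}}$, the kernel is pure precisely when $R_{\mf{m}}$ is a quotient of $R$.

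\textbf{Step 3: $O_1\ne\bz$.} This is where the pm hypothesis and $\mf{m}_1\ne\mf{m}_2$ are used. By the pm property, no prime ideal lies in both $\mf{m}_1$ and $\mf{m}_2$, so $\mf{m}_1R_{\mf{m}_1}$ and $\mf{m}_2R_{\mf{m}_2}$ have no common prime. By the standard argument (the one used by De Marco and by Contessa for pm-rings), there are $a_1\notin\mf{m}_1$ and $a_2\notin\mf{m}_2$ with $a_1a_2=0$. Then $a_2$ maps to zero in $R_{\mf{m}_1}$, so $a_2\in O_1$, and $a_2\ne 0$ because $a_2\notin\mf{m}_2$.

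Together the three steps give a non-trivial pure ideal, so no such pair $\mf{m}_1\ne\mf{m}_2$ exists and $R$ is local.

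The main obstacle is the purity in Step 2. The kernel of $R\to R_{\mf{m}}$ is not pure in general, and pm is exactly what should rescue this. In a pm-ring, De~Marco's ideal $O_{\mf{m}}$ is pure, and this is essentially the characterization of pm-rings in \cite{DeM}/\cite{Co}. So I would cite that result rather than reprove it: $R$ is pm if and only if each $O_{\mf{m}}$ is pure, equivalently the map from $\maxspec R$ onto the space of pure parts is injective. If a direct argument is needed, the plan is to use the pm hypothesis together with the compactness of $\maxspec R$ to find, for $a\in O_1$, the required $c\in O_1$ with $ac=a$.
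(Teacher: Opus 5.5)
Your converse is correct in outline and takes a genuinely different route from the paper. However, Step~2, which you rightly call the crux, is not yet proved.

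\textbf{The problem in Step~2.} From $a(1-b)=0$ you get $ab=a$, so the candidate is $c=b$; your $c=1-b$ gives $ac=0$. And $b$ need not lie in $O_1$. Both facts you propose to cite are also false as stated. In $\mathbb{Z}$ every $O_{\mf m}=\bz$ is pure, yet $\mathbb{Z}$ is not a pm-ring, and $\mathbb{Z}_{(p)}$ is not a quotient of $\mathbb{Z}$.

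\textbf{The repair.} What is true, and all you need, is this: if $R\to R_{\mf m}$ is surjective, then $O_{\mf m}$ is pure, because $R/O_{\mf m}\cong R_{\mf m}$ is $R$-flat. In a pm-ring this map is surjective, and the proof uses only the separation from your Step~3, with no compactness.
\begin{itemize}
\item Suppose a maximal ideal $\mf n\ne\mf m_1$ contained $O_1$. Pick $u\notin\mf m_1$ and $v\notin\mf n$ with $uv=0$. Then $v\in O_1\sbq\mf n$, a contradiction.
\item So $\mf m_1$ is the only maximal ideal over $O_1$, and every $s\notin\mf m_1$ is a unit modulo $O_1$. Hence $R/O_1\cong R_{\mf m_1}$ and $O_1$ is pure.
\item Element-wise: for $a\in O_1$ we have $\ann_Ra\not\sbq\mf m_1$, so $\ann_Ra+O_1=R$. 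Writing $1=t+c$ with $t\in\ann_Ra$ and $c\in O_1$ gives $ac=a$.
\item The separation itself is the multiplicative-set argument. $(R\setminus\mf m_1)(R\setminus\mf m_2)$ must contain $0$, since a prime avoiding it would lie in $\mf m_1\cap\mf m_2$.
\end{itemize}

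\textbf{Comparison with the paper.} The paper argues topologically. For a pm-ring the retraction $\spec R\to\maxspec R$ is continuous, so clopen subsets of $\maxspec R$ pull back to sets $D(e)$ with $e\in\B(R)$. The f-ring hypothesis enters only through De~Marco's corollary that $\maxspec R$ is totally disconnected. Then $\B(R)=\{0,1\}$ allows only one maximal ideal.

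You use the f-ring hypothesis directly: an indecomposable f-ring has only the pure ideals $\bz$ and $R$, and you exhibit the nonzero proper pure ideal $O_{\mf m_1}$. The paper's version is shorter on the page and fits its idempotents-versus-clopen-sets viewpoint, but it rests on two external results. Yours is elementary and self-contained, and shows exactly where each hypothesis is used.

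Run without indecomposability, your argument even recovers the piece of De~Marco's corollary the paper relies on. In a pm f-ring, $O_{\mf m_1}$ is generated by idempotents and contains $a_2\notin\mf m_2$. So some idempotent lies in $\mf m_1\setminus\mf m_2$, separating the two maximal ideals.
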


\begin{proof} A local ring is both a pm-ring and an f-ring. 

For the converse, first let $S$ be any pm-ring. Then, there is a continuous surjection $\phi \colon \spec S \to \maxspec S$.  If $C$ is a clopen set in $\maxspec S$, $\phi\inv (C)$ is a clopen set in $\spec S$ and, hence, is $D(e)$ for some $e\in \B(S)$. If $C'=\maxspec S\setminus C$, then $\phi\inv(C')= D(f)$, some $f\in \B(S)$. Since $\phi\inv(C)\cap \phi\inv(C')$ are disjoint, $ef=0$. 

Now return to $R$, as in the statement.  According to \cite[Corollary, \S2.4]{DeM}, $\maxspec R$ is totally disconnected. If $\mf{m}$ and $\mf{m}'$ are distinct maximal ideals of $R$, there is a clopen set $C\subset \maxspec R$ with $\mf{m}\in C$ and $\mf{m}'\notin C$. There are orthogonal idempotents $e,f\in \B(R)$ such that $C=D(e)\cap \maxspec R$ and its complement is $D(f)\cap \maxspec R$.  Since $\B(R) =\{0,1\}$, this leads to a contradiction since 0 is in every maximal ideal.  Hence, there is only one maximal ideal.  
\end{proof}

\begin{cor}\label{pmf=c}   A ring $R$ is a pm-ring and an f-ring if and only if $R$ is clean. \end{cor}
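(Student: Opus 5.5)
The plan is to reduce everything to the Pierce stalks. Three facts are already available: being a pm-ring is a ring Pierce property (Proposition~\ref{known}(iv)), being an f-ring is a ring Pierce property (Theorem~\ref{fP'}), and being clean is a ring Pierce property (Proposition~\ref{known}(i)). The last is witnessed by the criterion that $R$ is clean if and only if every Pierce stalk $R_x$ is local. Each Pierce stalk $R_x$ is an indecomposable ring, so Proposition~\ref{pmfind} applies to it directly.

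For the forward direction, suppose $R$ is a pm-ring and an f-ring. By Proposition~\ref{known}(iv), each $R_x$ is a pm-ring, and by Theorem~\ref{fP'} each $R_x$ is an f-ring. Since $R_x$ is indecomposable, Proposition~\ref{pmfind} shows that $R_x$ is local. As all Pierce stalks are local, $R$ is clean by \cite[Proposition~1.2]{BS}, equivalently Proposition~\ref{known}(i).

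For the converse, suppose $R$ is clean. Then every Pierce stalk $R_x$ is local. A local ring is both a pm-ring and an f-ring: it has a unique maximal ideal, and its only pure ideals are $\bz$ and $R_x$. Hence, by the "if" halves of Proposition~\ref{known}(iv) and Theorem~\ref{fP'}, $R$ is a pm-ring and an f-ring. Alternatively, $R$ is an f-ring directly by Corollary~\ref{clf}, and a pm-ring because clean rings are known to be pm-rings.

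The one point needing care is that each Pierce stalk is indecomposable, so that Proposition~\ref{pmfind} can be applied to it. This is standard Pierce theory: $\B(R_x)=\{0,1\}$ for every $x\in\spec\B(R)$. With that in hand, no step is a real obstacle; the work was done in Proposition~\ref{pmfind} and Theorem~\ref{fP'}.
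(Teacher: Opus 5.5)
Your proposal is correct and matches the paper's argument: for the forward direction you pass to the Pierce stalks using Proposition~\ref{known}(iv) and Theorem~\ref{fP'}, then apply Proposition~\ref{pmfind} to each indecomposable stalk to get it local, so $R$ is clean. For the converse, the paper just cites the known fact that clean rings are pm-rings and f-rings, which is your alternative route; your stalkwise argument (each local stalk is a pm-ring and an f-ring) is a valid spelling-out of the same fact.
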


\begin{proof} A clean ring is known to be both a pm-ring and an f-ring.

Conversely, since both properties are ring Pierce properties, it is sufficient to look at a Pierce stalk, $R_x$. By Proposition~\ref{pmfind}, $R_x$ is a local ring. Hence, $R$ is a clean ring. 
\end{proof}

Note that any product of clean rings is a clean ring.

As mentioned above, if $R$ is a clean ring then $\maxspec R$ is totally disconnected. It would be interesting to know if there exists an f-ring $R$ with $\maxspec R$ totally disconnected and $\text{T}_2$, but $R$ is not a clean ring.\epar

\setcounter{section}{4} \setcounter{thm}{0}
\noindent\tbf{4. A weak form of going-down vs flatness.}  This section has material about the going-down property of an extension by idempotents. Of course, if the extension is flat, going-down holds (e.g., \cite[Theorem~9.5]{Ma}), but not the converse. It is this converse that is discussed. 

In particular, if $A$ is a locally Specker $R$-algebra, then $A_R$ is flat by \cite[Proposition~7.1(2)]{BMO}.  In addition, since a locally Specker $R$-algebra $A$ is generated by idempotents and is $R$-flat, $A$ has both going-up and going-down with respect to $R$.  

In this context there will be some weakening of ``going-down'' that will be considered, and will often be all that is needed to get flatness.  

\begin{defn} \label{wgd}    Let $A$ be a ring extending a ring $R$.  Then, $A$ satisfies \emph{weak going-down} with respect to $R$ if $\mf{q}\in \spec A$ and $\mf{q}\cap R = \mf{p}$ and $\mf{p}'$ is a minimal prime lying in $\mf{p}$ then there is a minimal prime $\mf{q}'\sbq \mf{q}$ with $\mf{q}'\cap R=\mf{p}'$.   \end{defn}

Note that if $R\sbq A$ are domains, then $A$ always satisfies weak going-down with respect to $R$.  However, there are known integral extensions of domains that do not satisfy full going-down, e.g., \cite[Chapter~2, Example~5.1]{Ma2}. Hence, the concept of Definition~\ref{wgd} is strictly more general than going-down.

\begin{lem} \label{l-wgd}    Let $R\sbq A$ be rings so that $A$ has weak going-down with respect to $R$. (1)~Minimal primes of $A$ contract to minimal primes of $R$. (2)~If, in addition, $A$ is integral over $R$ and $R\sbq T \sbq A$ are rings, then, $T$ has weak going-down with respect to $R$. \end{lem}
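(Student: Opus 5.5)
Part (1) goes through the primes lying under a minimal prime. Let $\mf{q}\in\minspec A$ and put $\mf{p}=\mf{q}\cap R$. The prime $\mf{p}$ contains some minimal prime $\mf{p}'$ of $R$ (Zorn's lemma, or the standard fact that every prime contains a minimal prime). Weak going-down (Definition~\ref{wgd}) gives a minimal prime $\mf{q}'\sbq\mf{q}$ of $A$ with $\mf{q}'\cap R=\mf{p}'$. Since $\mf{q}$ is minimal, $\mf{q}'=\mf{q}$, so $\mf{p}=\mf{q}\cap R=\mf{p}'$ is minimal in $R$. This also shows that every prime of $R$ lying under a prime of $A$ contains a minimal prime that is itself a contraction of a minimal prime of $A$; this will be used in (2).

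For (2), assume $A$ is integral over $R$ and $R\sbq T\sbq A$. Take $\mf{q}_T\in\spec T$ with $\mf{q}_T\cap R=\mf{p}$, and let $\mf{p}'\in\minspec R$ with $\mf{p}'\sbq\mf{p}$.
\begin{enumerate}
\item Since $A$ is integral over $T$, lying-over gives $\mf{q}\in\spec A$ with $\mf{q}\cap T=\mf{q}_T$. Then $\mf{q}\cap R=\mf{p}$.
\item Weak going-down for $A$ over $R$ gives a minimal prime $\mf{q}'\sbq\mf{q}$ of $A$ with $\mf{q}'\cap R=\mf{p}'$.
\item Put $\mf{q}_T'=\mf{q}'\cap T$. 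Then $\mf{q}_T'\sbq\mf{q}_T$ and $\mf{q}_T'\cap R=\mf{p}'$.
\end{enumerate}

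It remains to show that $\mf{q}_T'$ is a minimal prime of $T$, and this is the main obstacle. Suppose $\mf{q}''\sbq\mf{q}_T'$ with $\mf{q}''\in\spec T$. Then $\mf{q}''\cap R\sbq\mf{p}'$, and minimality of $\mf{p}'$ forces $\mf{q}''\cap R=\mf{p}'$. So $\mf{q}''$ and $\mf{q}_T'$ are comparable primes of $T$ with the same contraction to $R$. Since $T$ is integral over $R$ (as a subring of $A$), incomparability gives $\mf{q}''=\mf{q}_T'$. Hence $\mf{q}_T'$ is minimal in $T$, and $T$ has weak going-down with respect to $R$.

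Notably, minimality of $\mf{q}'$ in $A$ is not needed here: integrality of $T$ over $R$ together with minimality of $\mf{p}'$ already force minimality of $\mf{q}_T'$. Lying-over for $A$ over $T$ is what lets us move from $\mf{q}_T$ up to $A$ in the first step.
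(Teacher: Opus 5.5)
Your proposal is correct and follows the paper's route: part (1) is identical, and in (2) you likewise lift $\mf{q}_T$ to $A$ by lying-over, apply weak going-down in $A$, and contract the resulting minimal prime back to $T$. The only difference is the last step. The paper takes any minimal prime $\mf{s}'$ of $T$ inside $\mf{q}'\cap T$ and uses only that $\mf{s}'\cap R\sbq\mf{p}'$ forces equality by minimality of $\mf{p}'$, so it never needs incomparability; your INC argument is valid and yields the slightly stronger conclusion that $\mf{q}'\cap T$ is itself minimal.
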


\begin{proof} (1)~Let $\mf{q}\in \minspec A$ and $\mf{p}=\mf{q}\cap R$. There is $\mf{p}'\in \minspec R$ with $\mf{p}'\sbq \mf{p}$. By weak going-down, there is $\mf{q}'\sbq \mf{q}$ with $\mf{q}'\cap R=\mf{p}'$. However, $\mf{q}$ is minimal and so $\mf{q}'=\mf{q}$, and, hence, $\mf{p}'=\mf{p}$. 

(2)~Let $\mf{q}\in \spec T$ with $\mf{p}=\mf{q}\cap R$. By integrality, there is $\mf{q}'\in \spec A$ with $\mf{q}'\cap T=\mf{q}$. Let $\mf{p}'\sbq \mf{p}$ be a minimal prime. By weak going-down, there is $\mf{r}\in \minspec A$ with $\mf{r}\sbq \mf{q}'$ and $\mf{r}\cap R=\mf{p}'$. Then, $\mf{r}\cap T=\mf{s}\sbq \mf{q}$. Let $\mf{s}'\in \minspec T$ with $\mf{s}'\sbq \mf{s}$. It follows that $\mf{s}'\cap R= \mf{p}'$, and $T$ has weak going-down with respect to $R$.
\end{proof} 

It will be shown that in some situations in extensions by idempotents, that this weaker form of going-down implies flat (and, hence, full going-down).  

A special case is needed where weak going-down implies flatness (and even more).  This is a key to a more general result. 

\begin{lem} \label{fl}   Suppose $R$ is a semiprime local ring and $R\sbq A=R[E]\in \igr{R}$,  $E\sbq \B(A)$, so that $A$ satisfies weak going-down with respect to $R$.  Then, every Pierce stalk of $A$ is a copy of $R$, and $A$ is semiprime. Moreover, $A=R[\B(A)]_{Sp}$ and $A_R$ is free.\end{lem}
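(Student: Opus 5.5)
Since $R$ is local, $\B(R)=\{0,1\}$, so $R$ is indecomposable and has a single Pierce stalk, namely $R$ itself. By Lemma~\ref{addi}, each Pierce stalk $A_y$, $y\in\spec \B(A)$, is a homomorphic image $R/K_y$ of $R$. Here $K_y = yA\cap R$ is the kernel of $R\to A\to A_y$. The plan is to show that weak going-down forces $K_y=\bz$ for every $y$. Once this is done, each Pierce stalk of $A$ is a copy of $R$, which is semiprime, and Proposition~\ref{known}(v) gives that $A$ is semiprime. Also, \cite[Theorem~3.3(3)]{BMO} then says that $A$ is a locally Specker $R$-algebra. Since $\B(R)=\{0,1\}$, the map $\gamma$ in Definition~\ref{LSp} imposes no relations beyond those of Definition~\ref{Stdef}, so $A\cong R[\B(A)]_{Sp}$. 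This algebra is a free $R$-module by \cite{BMMO}.

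The key step is $K_y=\bz$. I would argue as follows. Fix $y$ and take any $\mf{p}'\in\minspec R$. Let $\mf{m}$ be the maximal ideal of $R$, and choose a prime $\mf{q}$ of $A$ containing $yA$ with $\mf{q}\cap R=\mf{m}$. Such a $\mf{q}$ exists because $A_y$ is integral over $R/K_y$, so lying-over applies to $R/K_y\subseteq A_y$ and the maximal ideal $\mf{m}/K_y$.

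By weak going-down there is a minimal prime $\mf{q}'\sbq\mf{q}$ with $\mf{q}'\cap R=\mf{p}'$. Now $\mf{q}$ contains exactly one Pierce kernel, by Lemma~\ref{mp}(4) applied to $A$, and that kernel is $yA$. Every idempotent of $A$ lies in $\mf{q}'$ or its complement lies in $\mf{q}'$, so $\mf{q}'\cap\B(A)$ is a prime of $\B(A)$ contained in $\mf{q}\cap\B(A)=y$. Hence $\mf{q}'\cap \B(A)=y$, and therefore $yA\sbq\mf{q}'$. Contracting to $R$ gives $K_y\sbq \mf{q}'\cap R=\mf{p}'$.

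Since $\mf{p}'$ was an arbitrary minimal prime of $R$, $K_y\sbq\bigcap\minspec R=\bz$, using that $R$ is semiprime. So $K_y=\bz$ for every $y$, and $A_y\cong R$.

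I expect the main obstacle to be making this step airtight: producing a prime $\mf{q}\supseteq yA$ lying over $\mf{m}$, and identifying the Pierce kernel contained in $\mf{q}'$. The lying-over step needs the integrality of $A_y$ over $R/K_y$, which holds because $A_y$ is generated over $R/K_y$ by idempotents. The identification step uses the fact that the idempotents in a prime ideal form a prime (hence maximal) ideal of the boolean algebra. A second point to check is the final identification $A\cong R[\B(A)]_{Sp}$. This needs the surjection $\psi$ of Proposition~\ref{qSt} to be injective. I would verify this stalkwise, using the description of the Pierce structure of Specker algebras in \cite[Theorem~3.3]{BMO}, or simply cite the locally Specker conclusion with trivial $\gamma$. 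Freeness then follows from \cite[Lemma~2.6]{BMMO} and the construction of $R[B]_{Sp}$.
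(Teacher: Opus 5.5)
Your proof is correct and is essentially the paper's argument: you lift each minimal prime of $R$, via weak going-down, to a minimal prime of $A$ lying below a prime over $\mf{m}$ that contains $yA$, conclude $yA\cap R=\bz$ by semiprimeness, and then invoke \cite[Theorem~3.3(3)]{BMO} with $R$ indecomposable to get a Specker, hence free, algebra. The only differences are cosmetic: you get $yA\sbq\mf{q}'$ from the fact that prime ideals of $\B(A)$ are maximal rather than from the purity argument of Lemma~\ref{mp}(1), and you produce the prime over $\mf{m}$ by lying-over in $A_y$ rather than by taking a maximal ideal containing $yA$.
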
 

\begin{proof} Denote the unique maximal ideal of $R$ by $\mf{m}$.  Since $R$ is semiprime, there are minimal primes $\mf{p}_i, i\in I$ with $\bigcap_{i\in I}\mf{p}_i = \bz$.  Now consider a Pierce kernel $yA$ of $A$ and choose a maximal ideal $\mf{n}$ with $yA\sbq \mf{n}$. It follows that $\mf{n}\cap R= \mf{m}$ (by going-up). 

Now each minimal prime of $R$, $\mf{p}_i\sbq \mf{m}$ and so, by weak going-down, there is a minimal prime $\mf{q}_i\sbq \mf{n}$ in $A$ lying over $\mf{p}_i$.  Also, $yA\sbq \mf{q}_i$ by Lemma~\ref{mp}(1).

Since $yA\sbq \bigcap_{i\in I}\mf{q}_i$, $yA\cap R \sbq \bigcap_{i\in I}\mf{q}_i \cap R = \bigcap_{i\in I} \mf{p}_i =\bz$, showing that $yA\cap R=\bz$. Then, $R\to A \to A/yA$ is onto with kernel $yA\cap R$, showing that $A/yA\cong R$, in particular as $R$-modules.

Since $R$ is indecomposable, locally Specker extensions of $R$ are, in fact, Specker. The result follows from \cite[Theorem~3.3(3)]{BMO}, which says that $A$ is  Specker, hence $A_R$ is free. \end{proof}

Before the next two propositions about weak going-down, a lemma will be useful.

\begin{lem} \label{loclem} Let $R\sbq T$ be an extension of rings and $0\notin S\subset R$ be multiplicatively closed. If $T$ satisfies weak going-down with respect to $R$ then so does $S\inv T$ with respect to $S\inv R$. \end{lem}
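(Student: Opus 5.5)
The approach is to use the standard correspondence between primes of a localization and primes of the original ring that miss the multiplicative set. The one point to watch is that $S\inv R\to S\inv T$ need not be injective. Since $S\sbq R\sbq T$, however, localization is exact, so $S\inv R\sbq S\inv T$; the hypothesis $0\notin S$ guarantees $S\inv R\ne \bz$. Write $\iota_R\colon R\to S\inv R$ and $\iota_T\colon T\to S\inv T$ for the canonical maps. Recall that $\mf{p}\mapsto S\inv\mf{p}$ is an inclusion-preserving bijection from the primes of $R$ disjoint from $S$ onto $\spec S\inv R$, with inverse given by contraction along $\iota_R$. The same holds for $T$. This bijection carries minimal primes to minimal primes, because any prime contained in a prime disjoint from $S$ is itself disjoint from $S$. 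The correspondence is also compatible with contraction: for $\mf{q}\in\spec T$ with $\mf{q}\cap S=\vide$, we have $S\inv\mf{q}\cap S\inv R = S\inv(\mf{q}\cap R)$. To check this, take $r/s\in S\inv\mf{q}$ with $r\in R$. Then $ur\in\mf{q}$ for some $u\in S$, and since $u\notin\mf{q}$ this forces $r\in\mf{q}\cap R$.

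With these facts in hand, the verification is a translation. Let $Q\in\spec S\inv T$, put $P=Q\cap S\inv R$, and let $P'\sbq P$ be a minimal prime of $S\inv R$. Write $Q=S\inv\mf{q}$ with $\mf{q}=\iota_T\inv(Q)$ disjoint from $S$. Then $\mf{p}:=\mf{q}\cap R$ satisfies $S\inv\mf{p}=P$ by the compatibility above. Likewise write $P'=S\inv\mf{p}'$. Here $\mf{p}'\in\minspec R$, $\mf{p}'\sbq\mf{p}$, and $\mf{p}'\cap S=\vide$. Weak going-down for $R\sbq T$ then gives a minimal prime $\mf{q}'\sbq\mf{q}$ of $T$ with $\mf{q}'\cap R=\mf{p}'$. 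Because $\mf{q}'\sbq\mf{q}$, it is disjoint from $S$, so $Q':=S\inv\mf{q}'$ is a prime of $S\inv T$. It is minimal in $S\inv T$ by the correspondence, it satisfies $Q'\sbq Q$, and $Q'\cap S\inv R=S\inv(\mf{q}'\cap R)=S\inv\mf{p}'=P'$. This is exactly weak going-down for $S\inv R\sbq S\inv T$.

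The only step that needs any care is the contraction identity $S\inv\mf{q}\cap S\inv R=S\inv(\mf{q}\cap R)$. It is a short element chase, as above: clear denominators using elements of $S$ and use primality of $\mf{q}$ together with $\mf{q}\cap S=\vide$. Everything else is the standard localization correspondence, for which the paper's usual reference \cite{AtM} suffices.
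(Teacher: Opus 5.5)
Your proof is correct and follows the same route as the paper's. Both arguments pass through the prime correspondence for $S\inv(-)$, which matches minimal primes with minimal primes, then apply weak going-down in $T$ over $R$ and localize back. You also make explicit two points the paper uses tacitly: that $S\inv R\to S\inv T$ is injective, and that $S\inv\mf{q}\cap S\inv R=S\inv(\mf{q}\cap R)$ for primes $\mf{q}$ of $T$ missing $S$.
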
 

\begin{proof} The prime ideals of $S\inv T$ correspond to the prime ideals of $T$ not meeting $S$, modulo the kernel of $T\to S\inv T$.  That is, if $\mf{a}\in \spec T$ and $\mf{a}\cap S=\vide$, then $\mf{a}$ corresponds to $S\inv \mf{a}$. This correspondence preserves and reflects inclusions. Similarly for $S\inv R$. If $\mf{b}\in \minspec S\inv T$, then, via the correspondence, it comes from a minimal prime in $T$. 

Now let $\mf{q}\in\spec S\inv T$ and $\mf{p}=\mf{q}\cap S\inv R$. These primes correspond to primes $\mf{p}'$ of $R$ and $\mf{q}'$ of $T$ according to the arrangement above. Now take a minimal prime $\mf{p}_1$ of $S\inv R$, with $\mf{p}_1\sbq \mf{p}$.  Then, $\mf{p}_1$ corresponds to a minimal prime $\mf{p}_1'$ in $R$, with $\mf{p}_1'\sbq \mf{p}'$.  By weak going-down in $T$ with respect to $R$, there is $\mf{q}_1'\in \minspec T$ with $\mf{q}_1'\sbq \mf{q}'$ and $\mf{q}_1'\cap R = \mf{p}_1'$.  Then, $\mf{q}_1'$ corresponds to $\mf{q}_1\in \minspec S\inv T$ and $\mf{q}_1\cap S\inv R = \mf{p}_1$, showing weak going-down of $S\inv T$ with respect to $S\inv R$. \end{proof}

\begin{prop} \label{spclean}  Suppose $R$ is a semiprime clean ring and $R\sbq A=R[E]\in \igr{R}$, $E\sbq \B(A)$, so that $A$ satisfies weak going-down with respect to $R$.  Then, every Pierce stalk of $A$ is a copy of a Pierce stalk of $R$. Moreover, $A$ is a locally Specker $R$-algebra and $A_R$ is flat.     \end{prop}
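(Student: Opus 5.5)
The plan is to reduce to Lemma~\ref{fl} by localizing at Pierce points of $R$. Since $R$ is clean and semiprime, each Pierce stalk $R_x$, $x\in\spec\B(R)$, is a semiprime local ring (Proposition~\ref{known}(i),(v)). By Lemma~\ref{mp}(2), $R_x = S\inv R$ with $S=\B(R)\setminus x$, which is multiplicatively closed and avoids $0$.

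Set $A' = S\inv A$. This localization equals $A/xA$, because for an idempotent $e\in S$ we have $a/e = ae/1$ and $e$ acts as $1$ modulo $(1-e)$. By Lemma~\ref{loclem}, $A'$ has weak going-down with respect to $R_x$. The map $R_x\to A'$ is injective, since $S\inv$ is exact and $R\sbq A$. Finally, $A'$ is generated over $R_x$ by the images of $E$, so $A'=R_x[E']\in\igr{R_x}$.

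Lemma~\ref{fl} then applies to $R_x\sbq A'$. Every Pierce stalk of $A'$ is a copy of $R_x$, and $A'$ is the Specker algebra $R_x[\B(A')]_{Sp}$, free over $R_x$.

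Next I need to match Pierce stalks. Take $y\in\spec\B(A)$ and put $x=y\cap\B(R)$. Since $x\sbq y$, we have $xA\sbq yA$. So $A_y = A/yA$ is a quotient of $A/xA=A'$ by the ideal generated by the image of $y$. That image is a proper prime of $\B(A')$, because $y$ contains no element of $S$. Hence $A_y$ is a Pierce stalk of $A'$, and therefore $A_y\cong R_x$ via the natural map $R_x\to A_y$. This is the main point to check carefully: the composite $R\to R_x\to A'\to A_y$ must be exactly the natural map of Lemma~\ref{addi}, and it must be an isomorphism. Lemma~\ref{fl} gives $A'/zA'\cong R_x$ through the natural composite, so this holds.

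Having every natural map $R_x\to A_y$ an isomorphism is precisely the criterion \cite[Theorem~3.3(3)]{BMO}, so $A$ is a locally Specker $R$-algebra. Flatness of $A_R$ then follows from \cite[Proposition~7.1(2)]{BMO}. Alternatively, use Proposition~\ref{flP}(1): each $A/xA$ is free over $R_x$, hence flat.

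The step I expect to need the most care is the identification of the Pierce stalks of $A$ with those of $A/xA$. This requires checking that $\B(A/xA)$ and the Pierce kernels correspond compatibly. I would handle it by noting that any idempotent of $A/xA$ lifts locally, on a clopen neighbourhood of $x$ in the Pierce sheaf of $R$, to an idempotent of $A$.
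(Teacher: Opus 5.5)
Your proposal is correct and is essentially the paper's proof: pass to $A/xA=S^{-1}A$ with $S=\B(R)\setminus x$, use Lemma~\ref{loclem} to get weak going-down over the semiprime local ring $R_x$, apply Lemma~\ref{fl}, and conclude via [BMO, Theorem~3.3]. The only difference is in the last step. The paper cites Theorem~3.3(2) directly from ``each $A/xA$ is a Specker $R_x$-algebra.'' You instead use part~(3) and check by hand that each $A/yA$ is a Pierce stalk of $A/xA$; this check is correct because idempotents lift modulo the idempotent-generated ideal $xA$, and it also gives the statement about Pierce stalks explicitly.
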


\begin{proof}  It is first needed to reduce the problem to an algebra over a semiprime local ring.  Choose $x\in \spec\B(R)$ and consider the extension by idempotents $A_x=A/xA$ of $R_x$. Note that $xA\cap R=xR$ and so $R_x \sbq A_x\in \igr{R_x}$.

Since $R_x$ and $A_x$ are classical localizations of $R$ and $A$, respectively, if $A$ satisfies weak going-down with respect to $R$, then, by Lemma~\ref{loclem}, $A_x$ satisfies weak going-down with respect to $R_x$. 

It follows, by Lemma~\ref{fl}, that $A_x$ is a Specker algebra over $R_x$. Hence, by \cite[Theorem~3.3(2)]{BMO}, $A$ is a locally Specker $R$-algebra, making, in particular, $A_R$ flat.
\end{proof}

For more general semiprime rings, the technique will be to reduce to the case of Lemma~\ref{fl}.  

 \begin{thm} \label{gdP}   Let $R$ be a semiprime ring and $R\sbq A=R[E]\in \igr{R}$, $E\sbq \B(A)$. If $A$ has weak going-down with respect to $R$ then $A_R$ is flat. \end{thm}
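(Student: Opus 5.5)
Flatness is a local property, and the plan reduces to Lemma~\ref{fl} by localizing at the maximal ideals of $R$. It suffices to show that for every $\mf{m}\in\maxspec R$, $A_{\mf{m}}=S\inv A$ is a flat $R_{\mf{m}}$-module, where $S=R\setminus\mf{m}$. Flatness of $A_R$ then follows from the standard criterion that $M_R$ is flat iff each $M_{\mf{m}}$ is $R_{\mf{m}}$-flat, e.g., \cite[Theorem~7.1]{Ma}.

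Fix $\mf{m}$. First, $R_{\mf{m}}$ is a semiprime local ring, since localizations of semiprime rings are semiprime. Next, $R_{\mf{m}}\sbq A_{\mf{m}}$, because localization is exact and $R\sbq A$. Moreover $A_{\mf{m}}=R_{\mf{m}}[E']$, where $E'$ is the image of $E$; these images are still idempotents, so $A_{\mf{m}}\in\igr{R_{\mf{m}}}$. By Lemma~\ref{loclem}, applied with $T=A$ and the multiplicative set $S$ (note $0\notin S$), $A_{\mf{m}}$ has weak going-down with respect to $R_{\mf{m}}$. All the hypotheses of Lemma~\ref{fl} now hold. That lemma says $A_{\mf{m}}$ is a Specker $R_{\mf{m}}$-algebra and, in particular, is free as an $R_{\mf{m}}$-module, hence flat. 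Since $\mf{m}$ was arbitrary, $A_R$ is flat.

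The only delicate point I expect is checking that Lemma~\ref{loclem} applies as stated. Its proof uses the correspondence between primes of $S\inv T$ and primes of $T$ not meeting $S$, and we need that minimal primes of $S\inv R$ come from minimal primes of $R$ contained in $\mf{m}$. This is exactly how that lemma was argued, so it carries over. A second check is that semiprimeness survives localization: if $(r/s)^n=0$, then $tr^n=0$ for some $t\in S$, so $(tr)^n=0$ and hence $tr=0$, giving $r/s=0$.

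As a remark, the conclusion could also be strengthened. By Lemma~\ref{intff}, $A_R$ is then faithfully flat, since $A$ is integral over $R$. Alternatively, one could try to localize at Pierce points instead, as in Proposition~\ref{spclean}, but $R_x$ need not be local. Localizing at maximal ideals avoids that issue, at the cost of obtaining only flatness rather than the locally Specker structure.
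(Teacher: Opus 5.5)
Your proposal is correct and follows the paper's proof. Both arguments localize (you at maximal ideals, the paper at all primes, which makes no difference), transfer weak going-down via Lemma~\ref{loclem}, apply Lemma~\ref{fl} to get $A_{\mf{m}}$ free over $R_{\mf{m}}$, and conclude by the local criterion for flatness; your added checks that semiprimeness and the inclusion $R_{\mf{m}}\sbq A_{\mf{m}}$ survive localization supply details the paper leaves implicit.
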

 
 \begin{proof} The technique is to reduce to the local case and apply Lemma~\ref{fl}. Choose $\mf{p}\in \spec R$ and look at the local ring $R_\mf{p}$ and the module (ring) of fractions $A_\mf{p}$. 
 
The following will be needed. 

 If $R\sbq S$ is an extension ring and $S$ satisfies weak going-down with respect to $R$ and $\mf{p}\in \spec R$, then $S_\mf{p}$ satisfies weak going-down with respect to $R_\mf{p}$. This is by Lemma~\ref{loclem}. 
   
 From the above, $A_\mf{p}$ has weak going-down with respect to $R_\mf{p}$. Lem\-ma~\ref{fl} now shows that $A_\mf{p}$ is flat over $R_\mf{p}$ (in fact, free). However, flatness of a module is a local property (see, e.g., \cite[page~26 and Theorem~7.1]{Ma}) showing that $A_R$ is flat. 
 \end{proof}
  
 It can be shown that the hypothesis ``semiprime'' in Theorem~4.5 is necessary.  
  
 The following is implicit  in \cite{BMO}, but is worth stating  explicitly. 

 \begin{prop} \label{finE}  Suppose $R$ is a ring and $A\in \igr{R}$.  Then, the following are equivalent:
 
 (i) $A$ is flat over $R$, 
 
 (ii) For every $F\sbq \B(A)$,  $R[F]$ is flat over $R$,

(iii) For each $e\in \B(A)$, $R[e]$ is flat over $R$. 
\end{prop}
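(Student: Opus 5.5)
The plan is to prove (i) $\Rightarrow$ (iii) $\Rightarrow$ (ii) $\Rightarrow$ (i). Throughout, $\phi\colon R\to A$ is the structure map, and for $F\sbq \B(A)$, $R[F]$ denotes the subring $\phi(R)[F]$ of $A$.

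For (ii) $\Rightarrow$ (i), take $F=\B(A)$. Since $A\in\igr{R}$, $A=\phi(R)[\B(A)]=R[\B(A)]$, so flatness of $A$ is a special case of (ii). This direction is trivial.

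For (iii) $\Rightarrow$ (ii), first reduce to finite $F$. Every element of $R[F]$ lies in $R[F_0]$ for some finite $F_0\sbq F$, so $R[F]$ is the directed union of the subalgebras $R[F_0]$. A direct limit of flat modules is flat, so it suffices to treat finite $F$. For finite $F$, replace $F$ by the finite boolean subalgebra $B_0$ of $\B(A)$ that it generates; this does not change $R[F]$. Let $f_1,\dots,f_n$ be the atoms of $B_0$, an orthogonal set of idempotents with sum $1$. Then $R[F]=\bigoplus_{j}\phi(R)f_j$, and each summand satisfies $\phi(R)f_j\cong R/\ann_R(f_j)$ as $R$-modules, where $\ann_R(f_j)$ means the annihilator of $\phi(r)f_j$-multiplication, i.e.\ $\{r\in R: \phi(r)f_j=0\}$. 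Similarly, $R[f_j]=\phi(R)f_j\oplus \phi(R)(1-f_j)$, so by (iii) the module $\phi(R)f_j$ is a direct summand of a flat $R$-module and is therefore flat. A finite direct sum of flat modules is flat, which gives (ii).

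For (i) $\Rightarrow$ (iii), the same decomposition shows it is enough to prove that $\phi(R)e\cong R/\ann_R(e)$ is $R$-flat for every $e\in\B(A)$, i.e.\ that $\ann_R(e)$ is a pure ideal. This is the main step. The intended argument is to note that $\phi(R)e = Ae\cap \phi(R)e$ is the image of $R\to Ae$, and that $Ae$ is a direct summand of $A$, hence flat. Write $I=\ann_R(e)$, so $R/I\hookrightarrow Ae$, and we want purity of $I$. Take $a\in I$. Then $\phi(a)e=0$ in $Ae$. Apply the equational criterion for flatness of $Ae$ (as in the proof of Proposition~\ref{flP}) to the relation $e\cdot a=0$. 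This yields $b_i\in R$ and $v_i\in Ae$ with $b_ia=0$ and $\sum_i b_iv_i=e$. Next, write each $v_i=\sum_k r_{ik}g_{ik}$ with orthogonal $g_{ik}\le e$ (Lemma~\ref{folk}). Multiplying out gives $e=\sum_k c_k g_k$ with $c_k\in\langle b_i\rangle$, hence $c_ka=0$, and with orthogonal $g_k$ whose join is $e$. Consequently $\phi(1-c_k)g_k=0$, i.e.\ $1-c_k\in\ann_R(g_k)$.

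The obstacle is to convert this into a single $b\in I$ with $ab=a$. The natural candidate is to combine the $c_k$ and use $\ann_R(e)=\bigcap_k\ann_R(g_k)$, an induction on the number of pieces that may need the Pierce-stalk reduction. Concretely, I would pass to the Pierce stalks of $R$, where Proposition~\ref{flP}(1) lets flatness be checked stalkwise. At each stalk I would try to show that the surjection $R_x\to (\phi(R)e)_x$ has pure kernel, using that $(Ae)_x$ is flat over the stalk. If the direct elementwise route stalls, this stalkwise reduction is the fallback.
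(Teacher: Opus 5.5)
Your arguments for (ii)$\Rightarrow$(i) and (iii)$\Rightarrow$(ii) are correct. For (i)$\Rightarrow$(iii) you have already done all the work, but you stopped one line short. The step you call the obstacle closes at once, so you need neither an induction nor the Pierce-stalk fallback. (As sketched, that fallback only restates the same purity question on an indecomposable stalk, which need not be an f-ring.)

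Here is the missing line. You have $c_k\in R$ with $c_ka=0$ and $\phi(1-c_k)g_k=0$ for each $k$, where the $g_k$ are orthogonal with $\sum_k g_k=e$. Put $b=\prod_k(1-c_k)$.
\begin{itemize}
\item Since $a(1-c_k)=a$ for every $k$, you get $ab=a$.
\item Each factor $1-c_k$ lies in the ideal $\ann_R(g_k)$, so $b\in\bigcap_k\ann_R(g_k)$, and this intersection equals $\ann_R(e)=I$.
\end{itemize}
Hence $I$ is pure and $\phi(R)e\cong R/I$ is flat. Statement (iii) then follows from $R[e]=\phi(R)e\oplus\phi(R)(1-e)$, applied to both $e$ and $1-e$.

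With that line added, your proof is complete. It differs from the paper's only in being self-contained. The paper proves the proposition in one sentence by citing \cite[Lemma~7.7]{BMO}: each of (i)--(iii) is equivalent to $\ann_R e$ being pure for every $e\in\B(A)$. Your argument is in effect a proof of that criterion.
\begin{itemize}
\item The directed-union reduction and the atom decomposition $R[F]=\bigoplus_j\phi(R)f_j$, with $\phi(R)f_j\cong R/\ann_R(f_j)$, give \emph{pure annihilators imply flat}.
\item The equational criterion applied to the flat summand $Ae$ gives \emph{flat implies pure annihilators}.
\end{itemize}
The paper's route is shorter. Yours avoids the external lemma and shows that only the elementary characterization of purity is needed: for each $a\in I$ there is $b\in I$ with $ab=a$.
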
 
 
 \begin{proof} By \cite[Lemma~7.7]{BMO}, each of these three statements is equivalent to: for all $e\in \B(A)$, $\ann_Re$ is a pure ideal in $R$. \end{proof}
 
 \setcounter{section}{5} \setcounter{thm}{0} 
 \noindent\tbf{5. Adding idempotents and semiprime.} Semiprime commutative rings are an important subclass of $\mc{CR}$.  In this brief section, it will be first shown that adding idempotents to a semiprime ring does not always produce a semiprime ring.  Then, there are some cases where such extensions do produce semiprime rings.  There is more about semiprime rings in Section~6.
 
Clearly the property of being semiprime is both a Pierce property (Proposition~\ref{known}(v)) and a local property (meaning $R$ is semiprime if and only if for each $\mf{p}\in \spec R$, $R_\mf{p}$ is semiprime).  
  
 \begin{ex} \label{nsp}   There is an example of a semiprime ring $R$ and an extension $A=R[e]$, where $e$ is an idempotent, such that $A$ is not semiprime.   \end{ex}
 
\begin{proof} Let $D$ be a domain of characteristic 0 such that $2\notin 4D$, and $J$ the ideal of $D[X]$ generated by $X^2-X$ and $4X$. Put $D[e] = D[X]/J$ where $e = X+J$.  All elements of $J$ are with 0 constant term; showing that $R\sbq A$.  If $aX\in J$, $a\in D$, then $aX= f(X)(X^2-X) +g(X)4X$. Setting $X=1$ shows $a = g(1)4$. Hence, if $ae =0$, then $a=4d$, some $d\in D$. In particular, $2e\ne 0$. Clearly $D[e]$ is not semiprime. 
\end{proof} 

Notice that this kind of example can never be a Specker algebra (see \cite[Proposition~4.1]{BMMO}). 

If, in Example~\ref{nsp}, $D$ is a UFD and  2 is prime in $D$, then it can be seen that $\B(A) = \{0,1,e, 1-e\}$ and the Pierce stalks of $A$ are $D$ and $D/4D$.  In that case, the kernel of the natural homomorphism of Specker algebra using $B$ the four element boolean algebra,  $D[B]_{\text{Sp}} \to A$ has kernel generated by $4y_e$ (see Definition~\ref{Stdef}). 

As remarked after Proposition~\ref{known}, locally Specker algebras over a semiprime ring $R$ are semiprime. (It suffices that $\phi(R)$ be semiprime, and that is implied by $R$ semiprime.)

In the previous section there are results illustrating the next proposition. 

 \begin{prop} \label{flatE}   Let $R$ be a semiprime ring and $A\in \igr{R}$ with $R\sbq A$. If $A$ satisfies weak going-down with respect to $R$, then $A$ is semiprime.\end{prop}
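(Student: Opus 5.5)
Semiprimeness is a local property, as recalled at the start of this section: $A$ is semiprime if and only if $A_\mf{n}$ is semiprime for every $\mf{n}\in\spec A$. An equivalent route, which is the one I would take, is to localize over $R$: $A$ is semiprime if and only if $A_\mf{p}=(R\setminus\mf{p})^{-1}A$ is semiprime for every $\mf{p}\in\spec R$. This holds because a nilpotent element of $A$ that is nonzero stays nonzero in some localization at a maximal ideal $\mf{m}$ of $R$; otherwise its annihilator in $R$ would lie in no maximal ideal. So the plan is to reduce to the local case and then invoke Lemma~\ref{fl}, exactly as in the proof of Theorem~\ref{gdP}.

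Fix $\mf{p}\in\spec R$. The ring $R_\mf{p}$ is a semiprime local ring, since localizations of semiprime rings are semiprime. Also $R_\mf{p}\sbq A_\mf{p}$, because $A_\mf{p}$ is a flat-base localization and $R\sbq A$. Moreover $A_\mf{p}=R_\mf{p}[E']$, where $E'$ is the image of $E$, and the images of idempotents are idempotents; hence $A_\mf{p}\in\igr{R_\mf{p}}$. By Lemma~\ref{loclem}, applied with $S=R\setminus\mf{p}$, the ring $A_\mf{p}$ has weak going-down with respect to $R_\mf{p}$. The hypotheses of Lemma~\ref{fl} are therefore satisfied, and that lemma gives directly that $A_\mf{p}$ is semiprime (indeed $A_\mf{p}\cong R_\mf{p}[\B(A_\mf{p})]_{Sp}$, whose Pierce stalks are all copies of $R_\mf{p}$).

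To conclude, suppose $a\in A$ with $a^n=0$. Then $a/1$ is nilpotent in each $A_\mf{p}$, so $a/1=0$ for every $\mf{p}\in\spec R$. Hence $\ann_R a$ is contained in no prime, in particular in no maximal ideal, of $R$. Therefore $1\in\ann_Ra$ and $a=0$.

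The only delicate point is checking that the localized extension still satisfies the hypotheses of Lemma~\ref{fl}: that $R_\mf{p}\to A_\mf{p}$ is injective, and that Lemma~\ref{loclem} applies. Lemma~\ref{loclem} requires $0\notin S$, which holds. Injectivity follows from the exactness of localization. Since both checks are routine, I expect no real obstacle here.

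As an alternative, one could combine Theorem~\ref{gdP} ($A_R$ is flat) with a Pierce-stalk argument. The localization route above is shorter, however.
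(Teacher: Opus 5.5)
Your proposal is correct and is essentially the paper's proof. Both arguments localize at each $\mathfrak{p}\in\spec R$, use Lemma~\ref{loclem} and Lemma~\ref{fl} to get that $A_\mathfrak{p}$ is a semiprime (Specker) $R_\mathfrak{p}$-algebra, and conclude because $A$ embeds in $\prod_{\mathfrak{p}}A_\mathfrak{p}$; your annihilator argument just makes that embedding explicit, and, as you implicitly notice, the paper's appeal to Theorem~\ref{gdP} for flatness of $A_R$ is not needed for the semiprime conclusion.
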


\begin{proof}  By Theorem~\ref{gdP}, $A_R$ is, in fact, flat. Moreover, in the proof of \ref{gdP}, it is shown that for $\mf{p}\in \spec R$, $A_\mf{p}$ is a Specker algebra over $R_\mf{p}$, by Lemma~\ref{fl}. Hence, $A_\mf{p}$ is a semiprime ring. Then, $A$ embeds in $\prod_{\mf{p}\in \spec R}A_\mf{p}$, a semiprime ring. \end{proof}

\noindent \setcounter{section}{6} \setcounter{thm}{0} \tbf{6. Weak Baer rings and locally Specker algebras.}

The next results are about wB rings, and related rings, and $\igr{R}$.

\begin{thm} \label{epp}  Let $R$ be a wB ring and $R\sbq A=R[E]\in \igr{R}$, $E\sbq \B(A)$, an extension by idempotents that is ring essential over $R$.  Then, $A$ is a wB ring and each Pierce stalk of $A$ is isomorphic to a Pierce stalk of $R$; i.e., $A$ is locally Specker.  \end{thm}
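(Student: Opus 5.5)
The plan is to prove the two conclusions separately. Both rest on one observation that combines the wB property of $R$ with ring essentiality of $A$ over $R$.

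\emph{Key observation: for $r\in R$ with $\ann_R r=eR$, $e\in \B(R)$, one has $\ann_A r = eA$.} Clearly $eA\sbq \ann_A r$. Conversely, let $b\in A$ with $br=0$ and suppose $b(1-e)\ne 0$. By ring essentiality there is $c\in A$ with $0\ne b(1-e)c\in R$. This element annihilates $r$, so it lies in $\ann_R r=eR$. Hence it is fixed by multiplication by $e$, but it is also killed by $e$, so it is $0$, a contradiction. The same argument shows: \emph{if $g\in\B(A)$ satisfies $gr=0$, then $g\le e$}, i.e., $g(1-e)=0$. Indeed, $g(1-e)\in \ann_A r$, and if it were nonzero, essentiality would produce a nonzero element of $R$ annihilating $r$ but killed by $e$.

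\emph{Pierce stalks.} Let $y\in\spec\B(A)$ and $x=y\cap\B(R)$. The map $R_x\to A_y$ is surjective, since every idempotent of $A$ maps to $0$ or $1$ in $A_y$ (Lemma~\ref{addi}). It therefore suffices to show $yA\cap R\sbq xR$. Take $r\in yA\cap R$. Since $r$ is a finite combination of elements of $y$, there is $f\in y$ with $r(1-f)=0$. Let $\ann_R r=eR$. By the observation applied to $g=1-f$, we get $(1-f)(1-e)=0$, so $1-e\le f$. Hence $1-e\in y$, and so $1-e\in x$. Since $re=0$, we have $r=r(1-e)\in xR$. Thus each $A_y\cong R_x$, and \cite[Theorem~3.3(3)]{BMO} shows that $A$ is a locally Specker $R$-algebra.

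\emph{wB.} Let $a\in A$. By Lemma~\ref{folk}, write $a=\sum_{j=1}^l r_jf_j$ with $f_j$ orthogonal idempotents in $\B(A)$ and $r_j\in R$, and let $\ann_R r_j=e_jR$. By orthogonality, $ba=0$ if and only if $bf_jr_j=0$ for all $j$. By the observation, this holds if and only if $bf_j\in e_jA$ for all $j$, i.e., $b\in \bigl((1-f_j)+f_je_j\bigr)A$ for all $j$. So $\ann_A a=gA$ with $g=\prod_j\bigl(1-f_j+f_je_j\bigr)\in\B(A)$, and $A$ is wB.

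The main obstacle is the key observation, since it is the only place essentiality is used. Everything else is bookkeeping with Lemma~\ref{folk} and \cite[Theorem~3.3(3)]{BMO}.
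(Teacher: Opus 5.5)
Your proof is correct, and it takes a different and more elementary route than the paper's.

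The paper works through the auxiliary ring $U=\qcl(R)[\B(A)]$. It proceeds in four steps:
\begin{enumerate}
\item It shows that $U=\qcl(A)$ and that $U$ is regular.
\item It gets $\ann_A r=fA$ by comparing $f$ with the idempotent $g$ that generates $\ann_U r$. This uses clearing denominators with a non-zero divisor of $R$, followed by essentiality.
\item It treats $\ann_A(re)$ with an explicit idempotent, and handles sums with the semiprime remark ($*$).
\item It proves that $R_x\to A_y$ is injective by factoring through the map of fields $(\qcl(R))_x\to U_y$. This step needs $\B(A)=\B(U)$, so it needs $A$ to be wB already (via Bergman's Lemma~3.1).
\end{enumerate}

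You instead prove the identity $\ann_A r=eA$ directly inside $A$ from essentiality. Both conclusions then follow from this single identity. The wB property comes from the orthogonal decomposition of Lemma~\ref{folk}, and orthogonality makes ($*$) unnecessary. The stalk statement comes from the direct computation $yA\cap R=xR$, which does not depend on $A$ being wB.

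Your version is shorter, avoids rings of quotients altogether, and shows that essentiality is needed at exactly one point. What the paper's route gives in addition is the explicit description of $\qcl(A)$ as the regular ring $\qcl(R)[\B(A)]$. The paper reuses this description in the proof of Theorem~\ref{Rpp1}.
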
 
  
\begin{proof} Note that $A=R[E] = R[\B(A)]$ and that $A$ is semiprime since $R$ is (by essentiality).  This means that $Q(A)$ is a regular ring. Put $T=R[\B(Q(A))]$, $T$ is a wB ring; in fact it is a Baer ring since $Q(A)$ is (Lemma~\ref{Betal}(i)). Recall that $\qcl(R)$ is a regular ring with the same idempotents as $R$. Put $U= \qcl(R)[\B(A)]$, a regular ring. Note that for $u\in U$, $u=\sum_{i=1}^k q_ie_i$, $q_i\in \qcl(R)$, $e_i\in \B(A)$. Then there is a non-zero divisor $r\in R$ with $ur\in A$. By essentiality, $r$ is a non-zero divisor in $A$ and, hence, $r\inv \in \qcl(A)$. This makes $U\sbq \qcl(A)$. However, the regular ring $U$ (Corollary~\ref{clean}(2)), where non-zero divisors are invertible, is ring essential over $A$, as just shown, and a non-zero divisor in $A$ is also one in $U$. Hence, that makes $\qcl(A)\sbq U$. Thus, $\qcl(A) = U$, a regular ring.  
 
 It is next shown that $A$ is wB by showing that the annihilator of an element of $A$ is generated by an idempotent. Before proceeding, note the following: 
 
 ($*$)~if in some semiprime ring $V$ with $v,w\in V$, $\ann_V v=eV$ and $\ann_Vw= fV$, where $vw=0$ and $e,f\in \B(V)$, then, $\ann_V(v+w) =efV$. (If $t\in \ann_V(v+w)$, $tv^2=0$ and $tw^2=0$, giving the result.) This will be used in $A$. 
  
First take $r\in R\sbq A$. Then, $\ann_Rr=fR$, for some $f\in \B(R)$. Let $g\in \B(U)$ with $\ann_U r=gU$.  Then, $fg=f$. If $f\ne g$, then $g(1-f)\ne 0$. There is a non-zero divisor $r'\in R$ with $0\ne g(1-f)r'\in A$ and $a\in A$ with $0\ne g(1-f)r'a\in R$. But, $g(1-f)r'ar=0$ putting $g(1-f)r'a\in fR$, a contradiction unless $g(1-f)=0$ and $f=g$. This means that $\ann_Ar=fA$, since $fU\cap A=fA$. 

The next step is to take $re\in A$, $r\in R$ and $e\in \B(A)$. Then, there is $f\in \B(R)$ with $\ann_Ar= fA$, as above. Consider the idempotent $g= ef+(1-e)f + (1-e)(1-f)$. Then, $gre=0$. Suppose $a\in \ann_Are$. Notice that $are=0$ implies that $ae=aef$. Then, $ag = aef+a(1-e)f +a(1-e)(1-f) = ae +af -ae +a -ae -af +ae = a$. 

Finally, a typical element of $a=\sum_{i=1}^kr_ie_i$, $\{e_1,\ldots, e_k\}$ an orthogonal set from $\B(A)$; for each $i$, put $\ann_Ar_ie_i =g_iA$, $g_i\in \B(A)$. Then, by remark ($*$), $\ann_Aa=(\prod_{i=1}^kg_i)A$. Hence, $A$ is wB.

The last thing to check is the statement about the Pierce stalks of $A$. Pick $y\in \spec \B(A)$ and set $x=y\cap \B(R)$.  Then, $y\in \spec \B(U)$ since $A$ and $U$ have the same idempotents (because $A$ is wB, see \cite[Lemma~3.1]{Be}). Consider the two sequences: (a)~$R_x\twoheadrightarrow A_y \to U_y$, and (b)~$R_x \to (\qcl(R))_x \to U_y$. In (b), the first arrow is monic, and  the second is monic since both $(\qcl(R))_x$ and $U_y$ are fields (\cite[Lemma~3.1]{Be} again). Hence, the first arrow of (a) is also monic and, therefore, an isomorphism. Thus, $A_y\cong R_x$, as required.
 \end{proof}

 \begin{cor} \label{sh}  With the hypotheses of Theorem~\ref{epp}, if $R$ is also semi-hereditary, then $A$ is a locally Specker $R$-algebra, and is semi-hereditary.   \end{cor}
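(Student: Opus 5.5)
The locally Specker part is already in Theorem~\ref{epp}, so only semi-hereditarity of $A$ needs proof. I will use the standard characterization: a ring is semi-hereditary if and only if it is weak Baer and its localizations at primes (equivalently maximal ideals) are valuation domains. An equivalent form is that the ring is wB and all its modules are flat over a regular overring; the first form is the one I will work with.

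By Theorem~\ref{epp}, $A$ is wB and is a locally Specker $R$-algebra, so by \cite[Proposition~7.1]{BMO} $A_R$ is flat. It remains to show that each localization $A_{\mf{n}}$, $\mf{n}\in\maxspec A$, is a valuation domain, equivalently that every finitely generated ideal of $A$ is flat.

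\emph{Direct route.} Let $\mf{m}=\mf{n}\cap R$; this is maximal by integrality (going-up). Since $A_R$ is flat and, by Theorem~\ref{epp}, each Pierce stalk of $A$ is a copy of a Pierce stalk of $R$, I would first show that $A_{\mf{n}}\cong R_{\mf{m}}$. Take $y=\mf{n}\cap\B(A)$ and $x=y\cap \B(R)$. Then $A_{\mf{n}}$ is a localization of the stalk $A_y$ (Lemma~\ref{mp}(2),(4)), and $A_y\cong R_x$ via $R\to A$. So $A_{\mf{n}}$ is the localization of $R_x$ at the image of $\mf{m}$, which is $R_{\mf{m}}$, since $R_{\mf{m}}$ is also a localization of $R_x$. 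Because $R$ is semi-hereditary, $R_{\mf{m}}$ is a valuation domain, hence so is $A_{\mf{n}}$.

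\emph{Alternative route.} Let $I=\sum_{i} a_iA$ be a finitely generated ideal of $A$. By Lemma~\ref{folk}, write the $a_i$ using a common orthogonal family $f_1,\dots,f_l$ from $\B(A)$. Then $I=\bigoplus_j f_jI_j$ with $I_j$ the extension to $A$ of a finitely generated ideal of $R$. Each such extension is $J\otimes_R A$, which is projective over $A$ since $J$ is projective over $R$ and $A_R$ is flat, so that $J\otimes_RA\cong JA$. A direct summand cut out by $f_j$ is still projective. This argument needs only the flatness of $A_R$ and the idempotent decomposition, and I would present it as the main proof, with the local computation as a check.

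The main obstacle is the identification $J\otimes_R A\cong JA$ together with the passage to the summands $f_j$. Flatness gives injectivity of $J\otimes A\to A$, which settles the identification, and $f_j(JA)$ is a summand of $JA$, which settles the second point.
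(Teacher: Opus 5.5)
Your proposal is correct, and it actually contains two proofs.

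\textbf{Direct route.} This is essentially the paper's argument, moved from Pierce stalks to localizations. The paper takes $A$ to be wB from Theorem~\ref{epp}. It then invokes \cite[Theorem~4.1]{Be}: a wB ring is semi-hereditary iff its Pierce stalks are Pr\"{u}fer domains. Finally it notes that the stalks of $A$ are copies of those of $R$. You instead use the localization criterion: wB, which makes $\qcl(A)$ regular, together with every $A_{\mf{n}}$ being a valuation domain. You then push the stalk isomorphism $A_y\cong R_x$ down to $A_{\mf{n}}\cong R_{\mf{m}}$. This works because $x=\mf{m}\cap\B(R)$, and the preimage of $\mf{n}/yA$ under $R\to A_y$ is $\mf{n}\cap R=\mf{m}$. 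Flatness plays no role in this route.

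\textbf{Alternative route.} The route you intend as your main proof is genuinely different. You decompose a finitely generated ideal as $I=\bigoplus_j f_j(J_jA)$ along a common orthogonal family, and use $J_j\otimes_RA\cong J_jA$. This shows directly that $I$ is projective. It uses neither the wB property of $A$, nor the stalk identification, nor any structure theorem for semi-hereditary rings.

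\textbf{Comparison.} Your alternative route buys generality. It shows that every $A\in\igr{R}$ with $A_R$ flat over a semi-hereditary $R$ is semi-hereditary. So ring essentiality is needed only to secure flatness. Moreover, a semi-hereditary ring is wB, hence an f-ring, so Proposition~\ref{BMO7.9} then also gives the locally Specker conclusion. The paper's route is shorter, given the cited theorem, and fits its Pierce-stalk theme.

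One small correction: drop your aside that semi-hereditary is equivalent to ``wB and all modules flat over a regular overring.'' As written it is not a meaningful characterization, since every module over a regular ring is flat. You never use it, so nothing else is affected.
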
 
 
  \begin{proof} Since $A$ is a wB ring (Theorem~\ref{epp}), by \cite[Theorem~4.1]{Be}, it suffices to show that the Pierce stalks of $A$ are Pr\"{u}fer domains.  But, by Theorem~\ref{epp}, the Pierce stalks of $A$ are among those of $R$, and, hence, are Pr\"{u}fer domains. 
 \end{proof}
 
 Note that by \cite[Proposition~3.5]{DeM}, when $R$ is of the form $\cx$, wB and semi-hereditary coincide.

 It will be seen that there are indeed many examples where  Theorem~\ref{epp} applies. 
 
 \begin{thm}\label{Rpp1}   Let $R$ be a semiprime ring, $Q(R)$ its complete ring of quotients and $A = R[\B(Q(R)]$.  Then, $A_R$ is flat if and only if $R$ is a wB ring. When this happens, $A$ is a locally Specker $R$-algebra. \end{thm}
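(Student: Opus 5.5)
The plan is to prove the two directions separately, using Proposition~\ref{finE} and Theorem~\ref{epp}.

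\emph{$R$ wB $\Rightarrow$ $A_R$ flat and $A$ locally Specker.} This direction should be a direct application of Theorem~\ref{epp}. Since $R\sbq A\sbq Q(R)$ and $Q(R)_R$ is a module essential extension of $R_R$ (Corollary~\ref{empp}), for $0\ne a\in A$ there is $r\in R$ with $0\ne ar\in R$. Hence $aA\cap R\ne\bz$, so $A$ is ring essential over $R$. By construction $A=R[E]$ with $E=\B(Q(R))\sbq \B(A)$, so $A\in\igr{R}$. Theorem~\ref{epp} then says that $A$ is wB and locally Specker. Finally, locally Specker algebras are $R$-flat by \cite[Proposition~7.1]{BMO}, as noted after Corollary~\ref{lSff}. (Alternatively, since wB rings are f-rings by Corollary~\ref{awBf}, Proposition~\ref{BMO7.9} gives the equivalence of flatness and being locally Specker.)

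\emph{$A_R$ flat $\Rightarrow$ $R$ wB.} Here I would use Proposition~\ref{finE}, via its proof: flatness of $A$ over $R$ means that for every $e\in\B(A)$ the ideal $\ann_R e$ is pure in $R$. Fix $r\in R$. Since $Q(R)$ is regular, there is $e\in \B(Q(R))$ with $\ann_{Q(R)} r=eQ(R)$, and then $\ann_R r=eQ(R)\cap R$. I claim that $\ann_R(1-e)=\ann_R r$. If $s(1-e)=0$ then $s=se$, so $sr=ser=0$. Conversely, if $sr=0$ then $s\in eQ(R)$, so $s=se$ and $s(1-e)=0$. Thus $I=\ann_R r$ is a pure ideal. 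Purity gives, for each $a\in I$, some $b\in I$ with $ab=a$.

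The remaining step, which I expect to be the main obstacle, is to upgrade ``pure annihilator ideal'' to ``generated by a single idempotent of $R$''. The approach I would try:
\begin{itemize}
\item[] Step 1: show $R$ is awB. Localize at Pierce stalks: $R_x$ is flat over $R$, and annihilators commute with this classical localization (Lemma~\ref{mp}(2)). So the image of $I$ in $R_x$ is a pure annihilator ideal of an indecomposable semiprime ring.
\item[] Step 2: show that such an ideal must be $\bz$ or $R_x$. If $t\ne 0$ lies in it, purity gives $b$ in it with $tb=t$, so $t(1-b)=0$. Since $br=0$, combining annihilators in the semiprime ring, $\ann(r)\cap\ann(1-b)$ and the idempotent-like behaviour should force $b$ to be an idempotent up to nilpotents. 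Hence $b=1$ in the stalk. This makes each $R_x$ a domain, so $R$ is awB.
\item[] Step 3 (alternative, more global route): first use the purity characterization to get $b\in I$ with $b\,e'=e'$ for suitable elements. Then show $eQ(R)\cap R$ contains $e$ itself, i.e.\ $e\in R$: pure and essential should give that $e$ is a limit of the $b$'s, via $e(1-b)$ annihilating an essential ideal, hence equal to $0$. Then $e=eb\in R$.
\end{itemize}
With that, $\ann_R r=eR$ with $e\in\B(R)$, and $R$ is wB. I would try this global argument first, since it gives $e\in R$ directly: pick $b\in I$ and show $(e-b)$ kills the dense ideal $\ann_R r+\ann_R(\ann_R r)$, using purity to get $b$ with $ab=a$ for all $a$ in a finitely generated piece.
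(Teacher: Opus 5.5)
Your direction ``$R$ wB $\Rightarrow$ $A$ locally Specker and $A_R$ flat'' is correct. It is essentially the paper's argument: the paper re-runs the proof of Theorem~\ref{epp} with $T=A$ and $U=\qcl(R)[\B(A)]$, and your appeal to Theorem~\ref{epp} via the essentiality of $Q(R)_R$ amounts to the same thing. Your observation that $\ann_R r=\ann_R(1-e)$ is pure also matches the paper.

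The gap is everything after that point. Steps~1--3 use only that every $\ann_R r$ is pure (equivalently, every principal ideal of $R$ is flat), and that cannot yield wB, or even awB. To see this, note that in $\C(X)$ with $X$ an F-space, disjoint cozero sets are completely separated, so every $\ann r$ is pure.
\begin{itemize}
\item For $X=\beta\N\setminus\N$, the paper's own example, $\C(X)$ is awB but not wB. So Step~3's claim that purity forces $e\in R$ is false. Indeed, $e-b$ kills the dense ideal $\ann_R r+\ann_R(\ann_R r)$ only if $b$ acts as the identity on all of $\ann_R r$. Purity supplies such a $b$ only for finitely generated subideals; getting a single such $b$ is the conclusion itself.
\item For $X$ a connected F-space with more than one point (e.g.\ $\beta[0,\infty)\setminus[0,\infty)$), $\C(X)$ is indecomposable, semiprime and not a domain. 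Yet every $\ann r$ in it is pure, which refutes Step~2.
\end{itemize}
Moreover, even granting awB, nothing in your plan passes from awB to wB.

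You need more of the flatness of $A$ than the purity of the ideals $\ann_R r$. The paper extracts it in two ways.

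\emph{First (awB).} $A$ is integral over $R$, so $A_R$ is faithfully flat (Lemma~\ref{intff}). $A$ is Baer, hence an f-ring, and the f-ring property descends along faithfully flat extensions \cite[Remark~2, \S3]{Jp}. Thus the pure ideal $\ann_R r$ is generated by idempotents, and $R$ is awB.

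\emph{Second (the global ingredient your plan lacks).} $\qcl(A)\sbq Q(R)$ is regular and $A$-flat, so $\minspec A$ is compact (via \cite[Theorem~2.1]{C} and \cite[Theorem~3.1]{M}, as in Proposition~\ref{comp}). Then $\minspec R$, being a continuous image of $\minspec A$, is compact.

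Finally, awB together with compactness of $\minspec R$ gives wB by \cite{AM}. It is exactly this compactness of $\minspec R$ that fails for $\C(\beta\N\setminus\N)$.
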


\begin{proof} By construction, $A$ is a Baer ring and is thus semiprime.

Assume first that $A_R$ is flat.  Then, $A_R$ is faithfully flat (Lemma~\ref{intff}) and, being Baer, is an f-ring. Then, by \cite[Remark~2, \S3]{Jp}, $R$ is an f-ring.

It follows that $\qcl(A)\sbq Q(A)=Q(R)$, since $R\sbq A\sbq Q(R)$. (Flatness does not come into play here.)  Since $A$ is a Baer ring, $\qcl(A)$ is regular and flat over $A$. Then using \cite[Theorem~2.1]{C}, as in the proof of Proposition~\ref{comp}, $\minspec A$ is compact. Moreover, $\minspec R$ is a continuous image of $\minspec A$, making $\minspec R$ compact. 

For $r\in R$,  $\ann_R r= (\ann_A r)\cap R =eA \cap R$, for some $e\in \B(A)$.  But then, $\ann_A r = \ann_A (1-e)$. From this $\ann_Rr = (\ann_A (1-e)) \cap R= \ann_R (1-e)$.  Then, by \cite[Lemma~7.7]{BMO}, $\ann_Rr$ is a pure ideal.  Since, $R$ is an f-ring, $\ann_Rr$ is generated by idempotents, making $R$ an awB ring.  Then, using the fact that $\minspec R$ is compact, \cite[\S2, Lemma~$\beta$ and Theorem]{AM} (quoted in \cite[before Lemma~1.4]{NR}), $R$ is a wB ring (note that in \cite{AM}, a wB ring is called a Baer ring, contrary to the usual usage).  

In the other direction, assume that $R$ is a wB ring. The ideas from the proof of Theorem~\ref{epp} will be used. Since $A$ is a subring of the semiprime ring $Q(R)$, it is semiprime. Following the notation of the proof of Theorem~\ref{epp}, $T=R[\B(Q(A))]$ (here $T=A$) and $U=\qcl(R)[\B(A)]$. As in that proof, $U=\qcl(A)$. Then, the argument in the last paragraph of the proof of \ref{epp} shows that $A$ is locally Specker over $R$, and, hence, $A_R$ is flat. 
\end{proof}  

The proof of Theorem~\ref{Rpp1} contains the following, using the references to \cite{NR} and \cite{AM}: \emph{If $R$ is awb and $\qcl(R)$ is regular, then $R$ is wB.}

Combining \ref{epp}, \ref{Rpp1} and their proofs leads to the following. 
 
 \begin{prop} \label{onwB}  The following are equivalent for a semiprime ring $R$: 
 
(1) $R$ is wB.

(2)~Any ring $A=R[E]\in \igr{R}$ is wB where $E\sbq \B(Q(R))$.

(3)~Any ring $A=R[E]\in \igr{R}$ where $E\sbq \B(Q(R))$ is such that $A_R$ is flat.

(4)~There exists $A=R[E]\in \igr{R}$ where $E\sbq \B(Q(R))$ such that $A$ is wB and $A_R$ is flat.

(5)~Any ring $A=R[E]\in \igr{R}$ where $E\sbq \B(Q(R))$ is locally Specker. \end{prop}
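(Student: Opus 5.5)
We prove the equivalences in a cycle organized around (1). Throughout, $R$ is semiprime, so $Q(R)$ is a regular Baer ring. Any $A=R[E]$ with $E\sbq \B(Q(R))$ satisfies $R\sbq A\sbq Q(R)$. Since $Q(R)_R$ is a module essential extension of $R_R$, such an $A$ is ring essential over $R$: for $0\ne a\in A$ there is $r\in R$ with $0\ne ar\in R$, so $aA\cap R\ne\bz$. This observation lets us invoke Theorem~\ref{epp} for every such $A$.

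\emph{(1) $\Rightarrow$ (2) and (1) $\Rightarrow$ (5).} Assume $R$ is wB. For any $A=R[E]$ with $E\sbq\B(Q(R))$, Theorem~\ref{epp} applies by the observation above. It yields that $A$ is wB and that every Pierce stalk of $A$ is isomorphic to a Pierce stalk of $R$, i.e. $A$ is locally Specker.

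\emph{(5) $\Rightarrow$ (3).} A locally Specker $R$-algebra is $R$-flat by \cite[Proposition~7.1(2)]{BMO}, as already noted before Corollary~\ref{lSff}.

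\emph{(3) $\Rightarrow$ (1).} Take $E=\B(Q(R))$. Then $A=R[\B(Q(R))]$ is $R$-flat, and the forward direction of Theorem~\ref{Rpp1} gives that $R$ is wB.

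\emph{(2) $\Rightarrow$ (4).} Take $A=R[\B(Q(R))]$. It is wB by (2), and it is $R$-flat because (2) $\Rightarrow$ (1) $\Rightarrow$ (5) $\Rightarrow$ (3). To close the loop without circularity, we first prove (2) $\Rightarrow$ (1) directly. Take $E=\vide$, so $A=R$; then (2) says $R$ itself is wB. With that, (2) $\Rightarrow$ (1) $\Rightarrow$ (3) shows the chosen $A$ is also flat, giving (4).

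\emph{(4) $\Rightarrow$ (1).} This is the main step. Let $A=R[E]$ be wB and $R$-flat, with $R\sbq A\sbq Q(R)$. I would imitate the first half of the proof of Theorem~\ref{Rpp1}, using that $A$ is wB in place of Baer. The argument runs as follows.

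First, $A_R$ is faithfully flat by Lemma~\ref{intff}. Since $A$ is wB, it is an f-ring (Corollary~\ref{awBf}). Then, by \cite[Remark~2, \S3]{Jp}, $R$ is an f-ring.

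Second, since $A$ is wB, $\qcl(A)$ is regular. By Proposition~\ref{comp}, $\minspec A$ is compact. As $A$ is integral and flat over $R$, it has going-down, so minimal primes contract to minimal primes and the contraction map $\minspec A\to\minspec R$ is surjective. Hence $\minspec R$ is compact.

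Third, for $r\in R$ write $\ann_A r=eA$ with $e\in\B(A)$. Then $\ann_R r=\ann_R(1-e)$. By \cite[Lemma~7.7]{BMO}, flatness makes this annihilator a pure ideal of $R$, hence generated by idempotents since $R$ is an f-ring. So $R$ is awB.

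Finally, awB together with compact $\minspec R$ gives wB via \cite{AM}, as quoted in \cite{NR}.

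The main obstacle is this last implication. One must check that each ingredient of the Theorem~\ref{Rpp1} argument uses only "wB and flat", not "Baer" or "$E=\B(Q(R))$". The delicate point is surjectivity of $\minspec A\to\minspec R$, which I would obtain from going-down for flat extensions together with lying-over from integrality.
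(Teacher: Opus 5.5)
Your proof is correct and is essentially the paper's: (1)$\Rightarrow$(2) and (1)$\Rightarrow$(5) via Theorem~\ref{epp}, with ring essentiality coming from $R\sbq A\sbq Q(R)$; (5)$\Rightarrow$(3) because locally Specker algebras are $R$-flat; and (4)$\Rightarrow$(1) by rerunning the first half of the proof of Theorem~\ref{Rpp1} with wB in place of Baer (you also make explicit the faithful-flatness step showing $R$ is an f-ring, which the paper leaves implicit). The differences are only in how the implications are arranged: you get (3)$\Rightarrow$(1) directly from Theorem~\ref{Rpp1} rather than via (4), and you route (2)$\Rightarrow$(4) through (1)$\Rightarrow$(3), whereas the paper notes that (1)$\Rightarrow$(4) is already trivial with $A=R$.
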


\begin{proof} (1) $\Rightarrow$ (2) by Theorem~\ref{epp}. 

(4) $\Rightarrow$ (1): Since $A$ is wB it is an f-ring. The argument in the third paragraph of the proof of Theorem~\ref{Rpp1} shows that $R$ is awB. Since $A$ is wB, $\qcl(A)$ is regular and the argument of the second paragraph of the proof of \ref{Rpp1} shows that $\minspec R$ is compact and, hence, $R$ is wB, as in \ref{Rpp1}. 

(2) $\Rightarrow$ (1) is obvious, as is (1)~$\Rightarrow$~(4). 

(1) $\Rightarrow$ (3) by \ref{epp} since a locally Specker algebra is $R$-flat. 

(3) $\Rightarrow$ (4) by taking $A=R[\B(Q(R))]$, which is Baer and generated by idempotents.  Then, (4) $\Rightarrow$ (1) shows that $R$ is wB.  Then, by \ref{Rpp1}, $A_R$ is flat.

(1) $\Rightarrow$ (5) by \ref{epp}.

(5) $\Rightarrow$ (3) because locally Specker implies $R$-flat. 
\end{proof} 

Theorem~\ref{Rpp1} can be restated when $R=\cx$. 

\begin{cor} \label{bdc}   Let $R = \cx$ for some completely regular topological space $X$, and $A = R[\B(Q(X))]$. Then $A_R$ is flat if and only if $X$ is basically disconnected (i.e., $R$ is a wB ring). \end{cor}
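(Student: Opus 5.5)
This is Theorem~\ref{Rpp1} specialized to $R=\cx$. The plan is to check that $\cx$ meets the hypotheses of Theorem~\ref{Rpp1}, and then to translate the algebraic condition ``$R$ is wB'' into the topological condition ``$X$ is basically disconnected''.

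First, $\cx$ is semiprime, since a continuous function $f$ with $f^n=0$ vanishes everywhere. Its complete ring of quotients is written $Q(X)$, so $A=R[\B(Q(X))]$ is exactly the ring $R[\B(Q(R))]$ of Theorem~\ref{Rpp1}. That theorem then gives directly that $A_R$ is flat if and only if $\cx$ is a wB ring, and that in this case $A$ is a locally Specker $R$-algebra.

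It remains to verify the equivalence, stated parenthetically in the corollary, that $\cx$ is wB if and only if $X$ is basically disconnected, i.e.\ every cozero set has open closure. I would cite this from the literature (e.g.\ \cite[Proposition~3.5]{DeM}, or the exercises on basically disconnected spaces in \cite{GJ}), but the direct argument is short.

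\emph{wB implies basically disconnected.} For $f\in\cx$, note that $\ann_R f$ consists of the functions vanishing on $\mathrm{cl}(\mathrm{coz} f)$. If $\ann_R f=eR$ with $e$ idempotent, then $e$ is the characteristic function of a clopen set $C$. Since $e\in\ann_R f$, $C$ is disjoint from $\mathrm{cl}(\mathrm{coz} f)$. Conversely, complete regularity gives, for each point $p$ outside $\mathrm{cl}(\mathrm{coz} f)$, some $g\in\ann_R f$ with $g(p)\neq 0$; since $g=ge$, we get $p\in C$. Hence $C$ is the complement of $\mathrm{cl}(\mathrm{coz} f)$, so $\mathrm{cl}(\mathrm{coz} f)$ is clopen.

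\emph{Basically disconnected implies wB.} If $\mathrm{cl}(\mathrm{coz} f)$ is clopen, let $e$ be the characteristic function of its complement. Then $e\in\cx$, and $\ann_R f=eR$.

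The only mildly delicate step is the first direction, identifying the clopen set $C$ attached to $e$ with the complement of $\mathrm{cl}(\mathrm{coz} f)$, and that uses only complete regularity. Everything else is supplied by Theorem~\ref{Rpp1}, so I expect no real obstacle.
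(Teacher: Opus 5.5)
Your proposal is correct and follows the paper's proof: apply Theorem~\ref{Rpp1} to the semiprime ring $\cx$, and use \cite[Proposition~3.5]{DeM} to identify ``$\cx$ is wB'' with ``$X$ is basically disconnected.'' Your direct argument for that equivalence, which uses complete regularity to show the clopen support of $e$ is exactly $X\setminus\mathrm{cl}(\mathrm{coz} f)$, is also correct and can stand in for the citation.
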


\begin{proof} This follows immediately from Theorem~\ref{Rpp1} and \cite[Proposition~3.5]{DeM} (this latter characterizes rings $\cx$ that are wB). \end{proof} 

The rings $\cx$ give a rich selection of examples.

Recall that a space $X$ is an \emph{almost P-space} (see \cite[proposition~1.1]{Le}) when each non-empty $G_\delta$ set in $X$ has non-empty interior.   A set of equivalent conditions is in \cite[Proposition~1.1]{Le}; some of these will be used in the following. If $X$ is a P-space (\cite[4J]{GJ}, the spaces where $\cx$ is regular) then it is an almost P-space, but \cite{Le} has many kinds of examples of almost P-spaces that are not P-spaces.  However, if $X$ is an almost P-space that is not a P-space, then $\qcl(X)$ is not regular;  in fact $\qcl(X) = \cx$ (which follows readily from \cite[Proposition~1.1(i)]{Le}).

\begin{prop} \label{aps1}   Let $X$ be an almost P-space that is not a P-space. Then, $A=\cx[\B(Q(X))]$ is not flat as an $R$-module.  Moreover, there is such a space $X$ where $\cx$ is an f-ring. \end{prop}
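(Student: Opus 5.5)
The first assertion follows from Corollary~\ref{bdc}, or equivalently Theorem~\ref{Rpp1}, once we know that $\cx$ is not wB. So the plan is to show that an almost P-space $X$ which is not a P-space cannot be basically disconnected.

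Suppose $R=\cx$ were wB. Then $\qcl(R)$ would be regular, since in a wB ring $\qcl$ is regular (as used in Corollary~\ref{qcl=R}). But, as recalled just before the proposition, for an almost P-space one has $\qcl(X)=\cx$. This follows from \cite[Proposition~1.1(i)]{Le}: every non-unit has a zero set with non-empty interior, so it is a zero divisor. Hence $\cx$ would itself be regular, which by \cite[4J]{GJ} means $X$ is a P-space, a contradiction. Therefore $R$ is not wB, and Theorem~\ref{Rpp1} (with $R$ semiprime) shows that $A_R$ is not flat.

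For the second assertion we must exhibit an almost P-space $X$, not a P-space, with $\cx$ an f-ring. Two routes suggest themselves.

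\begin{itemize}
\item Show $\cx$ is awB and apply Corollary~\ref{awBf}. This route looks hard to realize: in an almost P-space the annihilators of elements are annihilators of functions vanishing on regular closed sets, and awB would require these to be generated by idempotents.
\item Use Corollary~\ref{clf} or Corollary~\ref{pmf=c}. Since $\cx$ is always a pm-ring, it suffices to find such an $X$ with $\cx$ clean. By the standard fact (Azarpanah) that $\cx$ is clean iff $X$ is strongly zero-dimensional, this reduces to finding a strongly zero-dimensional almost P-space that is not a P-space.
\end{itemize}

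I would take the second route with the candidate $X=\beta\N\setminus\N$. It is a compact almost P-space by \cite{Le}: nonempty $G_\delta$ sets in $\N^*$ have nonempty interior. It is not a P-space, because it is infinite and compact. Being compact and zero-dimensional, it is strongly zero-dimensional, so $\cx$ is clean and hence an f-ring by Corollary~\ref{clf}. This also fits Remark~\ref{Pcom}, where $\C(\beta\N\setminus\N)$ appears as an awB ring, which would give the f-ring property via Corollary~\ref{awBf} as well.

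The main obstacle is citing the almost P-property of $\beta\N\setminus\N$ correctly. Every nonempty zero set in $\N^*$ has nonempty interior, by the standard $\N^*$ fact in \cite{GJ}/\cite{Le}.
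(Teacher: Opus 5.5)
Your argument is correct. The second assertion follows the paper, but for the first you take a different route.

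The paper does not use Theorem~\ref{Rpp1} here. Assuming $A_R$ flat, it argues:
\begin{enumerate}
\item $\qcl(A)$ is regular (as $A$ is Baer), $A$-flat and hence $R$-flat, and module essential over $\cx$.
\item So by \cite[Theorem~2.1]{C} the regular ring $Q(X)$ is $R$-flat, and \cite[Theorem~3.1]{M} gives $\minspec\cx$ compact.
\item Since every $\cx$ satisfies the a.c., Proposition~\ref{qclreg} makes $\qcl(X)$ regular, contradicting $\qcl(X)=\cx$.
\end{enumerate}

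You instead quote Corollary~\ref{bdc} and reduce to a topological fact: a basically disconnected almost P-space is a P-space. You prove this elementarily: wB forces $\qcl$ to be regular, while every non-unit of $\cx$ is a zero divisor. This is shorter and bypasses the a.c.\ and Proposition~\ref{qclreg}. However, it relies on the full forward direction of Theorem~\ref{Rpp1}:
\begin{itemize}
\item the f-ring transfer through faithful flatness;
\item the purity of $\ann_Rr$ from \cite[Lemma~7.7]{BMO};
\item the criterion that awB plus compact $\minspec R$ gives wB.
\end{itemize}
The paper's proof reuses only the $\minspec$-compactness step of that argument and closes with the a.c., so it needs none of the f-ring/awB machinery.

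For the second assertion, your chain (strongly zero-dimensional $\Rightarrow$ clean by Azarpanah $\Rightarrow$ f-ring by Corollary~\ref{clf}) is a legitimate way to fill in the paper's terse ``strongly 0-dimensional; hence an f-ring''. Your doubt about the awB route is unnecessary. Remark~\ref{Pcom} already records, from \cite[Example~3.2]{NR}, that $\C(\beta\N\setminus\N)$ is awB, and the paper notes this right after the proposition.
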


\begin{proof} As in all such rings, $A$ is a Baer ring. Assume that $A_R$ is flat.  Put $B=\qcl(A)$, a regular ring. Then, $B_A$ is flat and, hence, $B_R$ is flat.  But $B$ is module essential over $\cx$, and, hence, is a regular (generalized) ring of quotients of $\cx$. By \cite[Theorem~2.1]{C}, if one regular ring of quotients is $R$-flat, so is any regular ring of quotients. In particular, the regular ring $Q(X)$ is $R$-flat. By \cite[Theorem~3.1]{M}, $\minspec \cx$ is compact. By Proposition~\ref{qclreg}, $\qcl(X)$ is regular (recall that every ring of the form $\cx$ has the a.c.\ by \cite[example (i) following Definition~3.2]{HJ}).   This is a contradiction.  Hence, $A_R$ is not flat. 

For the final statement, the space of \cite[Example~3.2]{NR}  is an example.  It is $X=\beta\N\setminus \N$, an almost P-space (\cite[Page~284, Examples~2]{Le}) that is strongly 0-dimensional; hence, $\cx$ is an f-ring.
\end{proof}

Notice that \cite[Example~3.2]{NR} is an awB ring (hence, also an f-ring) where the equivalent conditions of Proposition~\ref{onwB} fail.  \epar

\end{document}